\newtheorem{mytheo}{Theorem}[section]
\newtheorem{mydef}[mytheo]{Definition}
\newtheorem{myrem}[mytheo]{Remark}
\newtheorem{mylem}[mytheo]{Lemma}
\newtheorem{mycoro}[mytheo]{Corollary}
\newtheorem{mypropo}[mytheo]{Proposition}
\theoremstyle{remark}
\numberwithin{equation}{section}
\begin{document}

\title{THE RANK AND SINGULAR VALUES OF THE INHOMOGENEOUS SUBGAUSSIAN RANDOM MATRICES}

\pagestyle{fancy}

\fancyhf{} 


\fancyhead[CO]{\footnotesize INHOMOGENEOUS SUBGAUSSIAN RANDOM MATRIX }

\fancyhead[CE]{\footnotesize G. DAI, Z. SONG AND H. WANG }

\fancyhead[LE]{\thepage}
\fancyhead[RO]{\thepage}
\renewcommand{\headrulewidth}{0mm}

\author{Guozheng Dai}
\address{School of Mathematical Sciences, Zhejiang University, Hangzhou, 310058,  China.}
\email{guozhengdai1997@gmail.com}

\author{Zeyan Song}
\address{School of Mathematics, Shandong University, Jinan, 250100, China.}
\email{zeyansong8@gmail.com}

\author{Hanchao Wang }
\address{School of Mathematics, Shandong University, Jinan, 250100, China.}
\email{hcwang06@gmail.com}

\date{}

\keywords{rank of random matrix, inhomogeneous variable}

\begin{abstract}
Let $A$ be an $n\times n$ random matrix with mean zero and independent inhomogeneous non-constant sub-Gaussian entries. We get that for any $k<c\sqrt{n}$, the probability of the matrix has a lower rank than $n-k$ that is sub-exponential. Furthermore, we get a deviation inequality for the singular values of $A$. 
This extends earlier results of Rudelson's paper in 2024 by removing the assumption of the identical distribution of the entries across the matrix. Our model covers inhomogeneous matrices, allowing different subgaussian moments for the entries as long as their subgaussian moments have a standard upper bound.
In the past advance, the assumption of i.i.d entries was required due to the lack of least common denominators of the non-i.i.d random matrix. We can overcome this problem using a randomized least common denominator (RLCD) from Livshyts in 2021.
\end{abstract}

\maketitle

\section{Introduction}
Let $A$ be an $n \times n$ random matrix; the classical problem in probability is to estimate the probability that the random matrix $A$ is singular, i.e., $\textsf{P}\left( \det{A}=0 \right)$. In particular, we consider the random matrix $A$ with Rademacher entries (taking values from $ \pm 1$ with probability $1/2$ ). A remarkable work \cite{JAMS_singular} due to Kahn, Koml\'{o}s, and Szemer\'{e}di states that
\begin{align}
	\textsf{P}\left( \mathrm{det}(A) = 0 \right) \le  \left( 0.998 + o(1)\right)^{n}.\nonumber
\end{align}
Subsequently,  a lot of work was done to explore the asymptotically optimal exponent; the probability was bounded by $\left( 3/4+0(1) \right)^{n}$ in Tao and Vu \cite{Tao_RSA, Tao_JAMS} and developed further by Bourgain, Vu, and Wood \cite{bour}, which get a bound of $\left( 2^{-1/2}+o(1) \right)^{n}$. Finally, Tikhomirov \cite{Tikhomirov} made it. In particular, he proved that
\begin{align}
    \textsf{P}\left( \mathrm{det}\left( A \right)=0 \right) =\left( \frac{1}{2}+o\left( 1 \right) \right)^{n}.\nonumber
\end{align}
As a natural extension, studying the smallest singular value has attracted widespread attention. Consider the singular values of the random matrix $A$: $s_{1}(A) \ge \cdots \ge s_{n}(A) \ge 0$. The smallest singular value is defined by 
\begin{align}
    s_{n}(A)=\min_{x \in \mathrm{S}^{n-1}}{\Vert Ax \Vert_{2}}.\nonumber
\end{align}
Spielman and Teng \cite{Sconjecture} conjectured that, when $A$ is an i.i.d. Radechmacher random matrix, then for $\varepsilon\ge 0$
\begin{align}
	\textsf{P}(s_{n}(A)\le \varepsilon n^{-1/2})\le \varepsilon+e^{-cn}.\nonumber
\end{align}
In the past 20 years, much work has been done around this conjecture. Most notably, Rudelson and Vershynin \cite{Rudelsonadvance} showed Spielman-Teng's conjecture up to a constant. In particular, they proved for $\varepsilon \ge 0$
\begin{align}\label{Eq_Rudel_result}
    \textsf{P}(s_{n}(A) \le \varepsilon n^{-1/2}) \le C \varepsilon + e^{-cn}.
\end{align}
Indeed, they gave a more general result, which states that the above deviation inequality is valid when the entries of $A$ are i.i.d. subgaussian random variables (see \eqref{Eq_subgaussian} below for the definition) with mean $0$ and variance $1$.
It should be noted that \eqref{Eq_Rudel_result} with $\varepsilon=0$ yields the result that the invertible probability of $A$ is at most $\exp{(-cn)}$. 
\par 
There is also considerable interest in computing the distribution of $s_{n}(A)$ in a general ensemble. Rebrova and Tikhomirov \cite{Rebrova_IJM} recovered the full strength of \eqref{Eq_Rudel_result}, assuming only i.i.d. mean zero, variance one entries. A recent paper \cite{Livshyts_2} by Livshyts, Tikhomirov, and Vershynin established the deviation inequality of $s_{n}(A)$ when $A$ has independent but not uniquely distributed heavy-tailed entries, which may be the most mild assumption in this context. There are many other interesting works (e.g., \cite{Livshyts_1, Rudelson}) in computing the distribution of $s_{n}(A)$, which we will not cover here.\\
We now note 
\begin{align}
    \textsf{P}\left( \mathrm{rank}(A) \le n-1 \right)=\textsf{P}\left( \mathrm{det}(A)=0 \right) \le e^{-cn}.\nonumber
\end{align}
As introduced above, the bound of $\textsf{P}\left( \mathrm{rank}(A) \le n-1 \right)$ is well explored. One may naturally want to know
the behavior of $\textsf{P}\left( \mathrm{rank}(A) \le n-k \right)$. Let $A$ be an i.i.d. Radechmacher random matrix.
 Kahn, Koml\'{o}s, and Szemer\'{e}di showed that the probability that $A$ has a lower rank than $n-k$ is $O\left( f\left( k \right) \right)^{n}$, where $\displaystyle \lim_{k \to \infty}{f(k)} = 0$. 
Very recently, Rudelson \cite{Rudelsonrank} made a breakthrough in the i.i.d. subgaussian case. In particular, he showed for $k<c\sqrt{n}$,
\begin{align}
    \textsf{P}\left( \mathrm{rank}\left( A \right) \le n-k \right) \le e^{ -ckn }.\nonumber
\end{align}
On this basis, one may want to know whether the matrix rank has similar estimates when the matrix entries are not identically distributed. The first main contribution of this paper lies in deriving inequalities for the rank of the random matrix when the entries of the random matrix follow different distributions. Our first result shows that this is indeed possible.
\par 
Before presenting our result, we first introduce some notations: A random variable $X$ is called subgaussian if 
\begin{align}\label{Eq_subgaussian}
    \textsf{E}\exp{\left( -\left( X/K \right)^{2} \right)} < \infty
\end{align}
for some $K>0$ and denote $\Vert X \Vert_{\psi_{2}}$ by   
\begin{align}
    \Vert X \Vert_{\psi_{2}}=\inf{\left( t > 0 : \textsf{E}\left[ \exp{\left( X/t \right)^{2}} \right]  \le 2 \right)}.\nonumber
\end{align} 
Now, we assume that a random variable $X$ satisfying:
\begin{align}\label{Eq_condition}
\textsf{E}X=0, \ \textsf{E}X^{2}=1,  \ \Vert X\Vert_{\psi_{2}}\le K. 
\end{align}

We now give our first main result.
\begin{mytheo}\label{Theo_main1}
	Let $k,n\in \mathbb{N}$ be numbers such that $k<d_{1.1}\sqrt{n}$ and A be an $n\times n$ matrix with independent entries with satisfying \eqref{Eq_condition}. Then
$$\textsf{P}\left( \mathrm{rank}(A)\le n-k\right) \le \exp{(-c_{\ref{Theo_main1}}kn)}.$$ 
where $d_{1.1}$, $c_{\ref{Theo_main1}}>0$ are some constants depending only on $K$. 
\end{mytheo}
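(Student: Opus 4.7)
My plan is to convert the rank bound into a smallest $k$-th singular value bound: the event $\{\mathrm{rank}(A)\le n-k\}$ is exactly $\{s_{n-k+1}(A)=0\}$. To produce the $\exp(-ckn)$ decay, I will prove the stronger quantitative statement $\textsf{P}(s_{n-k+1}(A)\le t)\le(Ct)^{kn}+\exp(-ckn)$ for $t$ in some admissible range, and then take $t\to 0$. The vanishing of $s_{n-k+1}$ is equivalent to the existence of a $k$-dimensional subspace $V\subset\mathbb{R}^n$ on which $A$ contracts, so the proof naturally splits into (a) bounding the probability that $A$ contracts on a fixed subspace by $(Ct)^{kn}$ up to a negligible term, and (b) taking a union bound over the Grassmannian $G_{n,k}$, whose metric entropy is of order $nk$.

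The fixed-subspace contraction estimate is obtained via the by-now standard compressible/incompressible decomposition of the sphere in $V$. For compressible vectors, a net argument together with the subgaussian concentration of $\|Av\|_2$ gives exponential decay; since this part only uses sparsity and concentration, it passes through unchanged from the i.i.d.\ setting to the inhomogeneous one. For incompressible $v$, I need a Hal\'asz-type small-ball bound of the form $\textsf{P}(|\langle A_j,v\rangle-u|\le t)\lesssim t/\hat D(v)+\exp(-cn)$, where $\hat D(v)$ is the randomized least common denominator of Livshyts, together with a counting estimate $|\mathcal{N}_D|\le (CD/\sqrt n)^n$ for an appropriate net $\mathcal{N}_D$ of incompressible vectors with $\hat D(v)\in[D,2D]$. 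Once both hold, I tensorize across the $n$ independent rows to get $\textsf{P}(\|Av\|_2\le t\sqrt n)\le(Ct/\hat D(v))^n$, then take a union bound over $\mathcal{N}_D$ paying essentially one factor of the net size per dimension of $V$, which assembles into the overall $(Ct)^{kn}$ decay.

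The main obstacle is that, unlike the i.i.d.\ case where the LCD $D(v)$ is attached to a single row distribution, in the inhomogeneous setting a fixed $v$ interacts differently with each row. The remedy, borrowed from Livshyts, is to define $\hat D(v)$ by randomizing the shift in the LCD definition, which yields a single arithmetic quantity governing small-ball behavior uniformly over rows with uniformly bounded subgaussian norm $K$. The technical step I expect to be hardest is re-establishing the LCD-counting lemma for $\hat D$ with constants depending only on $K$ and not on the individual row distributions, since the classical proof relies on translation-invariance features that must be adapted to the randomized framework. The constraint $k<c\sqrt n$ enters when one balances the Grassmannian entropy $\exp(Cnk)$ against the net size and the small-ball gains at every RLCD level; this balance breaks down for larger $k$, matching the same threshold that appears in Rudelson's i.i.d.\ rank result.
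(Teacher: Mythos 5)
Your proposal correctly reduces the rank bound to the vanishing of $s_{n-k+1}(A)$ and correctly identifies Livshyts' RLCD as the right arithmetic control in the inhomogeneous setting, but the mechanism you propose does not close. Tensorizing Corollary~2.1 over the $n$ independent rows does give, for a \emph{fixed} $k$-dimensional subspace $V$ with RLCD at least $D$, a bound of roughly $(Ct+\sqrt k/D)^{kn}$ on the probability that every row of $A$ is $t$-close to $V^\perp$; but to transfer the event from an arbitrary contracted subspace to a net of $G_{n,k}$ you must discretize at accuracy of order $t/\|A\|\sim t/\sqrt n$, and the $k(n-k)\approx kn$ free parameters of the Grassmannian then force a net of size $\sim(\sqrt n/t)^{kn}$, which exactly cancels the $t^{kn}$ gain and leaves $(C\sqrt n)^{kn}$, a quantity that grows. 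Your sketch gives no bookkeeping for the interplay between net scale, RLCD level, and the $\exp(-cn)$ remainder in the small-ball estimate that would be needed to rescue this, and such a rescue is not what the paper does.

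The paper avoids the Grassmannian altogether: $\mathrm{rank}(A)\le n-k$ forces a set $J\subset[n]$, $|J|=n-k$, such that each of the remaining $k$ columns lies in $\mathrm{span}\left(\mathrm{Col}_i(A),\ i\in J\right)$, and the union over $J$ costs only $\binom{n}{k}\le\exp\left(k\log(en/k)\right)$, negligible next to $\exp(ckn)$. For a fixed $J$ one conditions on $B_J$; the factor $kn$ in the exponent then arises from \emph{independence of the remaining columns}, not from entropy of subspaces: $k$ columns, each annihilated under projection onto a deterministic subspace $E\subset\ker(B_J)$ of dimension $\ge k/2$ with huge RLCD, each such event costing $\exp(-cn)$ via Corollary~2.1 (this is Lemma~4.1). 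The real difficulty, handled by Proposition~3.1, is to show that $\ker(B_J)$ actually contains such an $E$ with probability $1-\exp(-ckn)$, and the crucial tool there --- entirely absent from your sketch --- is the almost-orthogonal-system decomposition of Lemma~2.7, which iteratively strips off bad $(\tfrac18)$-almost-orthogonal tuples (compressible or of small RLCD) from $\ker(B_J)$ until a clean high-RLCD subspace of dimension $\ge k/2$ remains. Finally, the threshold $k<c\sqrt n$ arises from the constraint $\|\theta\|_2\le\tfrac{1}{20\sqrt l}$ in Lemma~2.7 and Proposition~3.1 (see Remark~2.3), not from balancing Grassmannian entropy against small-ball gains.
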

We will use two techniques to obtain this result. On the one hand, we introduce a randomized log-least common denominator (RLCD) to study the rank of random matrices, we use the RLCD to estimate the small ball probability in linear spaces, which overcomes the issues brought about by random variables with different distributions. On the other hand, we use the random rounding method to discretize certain closed sets within linear spaces. During the discretization process, we need to estimate the size of the discretized network. Due to the influence of RLCD, we need to calculate the number of points in the discretized network that are not too far away in ``distance'' from the ``integer lattice''. This is the key to this article and will be introduced in detail in Section 3.
\par 
Based on our study of the rank of matrices, we can naturally investigate the $k$-th singular value of a matrix. This is because if we consider the $k$-th singular value of a matrix to be zero, we can immediately conclude that the rank of the matrix is less than $n-k$. Recently, Nguyen \cite{Nguyen_JFA} considered the distribution $s_{n-k+1}(A)$ where $A$ has i.i.d. standard subgaussian entries and proved that for any $\varepsilon>0$, $\gamma \in (0,1)$, and $k \in (c_{0},n/c_{0})$
\begin{align}
    \textsf{P}\left( s_{n-k+1}(A) \le \varepsilon n^{-1/2} \right) \le (C\varepsilon/k)^{(1-\gamma)k^{2}} + \exp{(-cn)}.\nonumber
\end{align}

Based on the techniques used to prove Theorem \ref{Theo_main1}, we have established the small ball probability inequality for a random matrix's k-th smallest singular value when its entries are not identically distributed. The second main contribution of this paper is that our small-ball probability inequality improves some results of  \cite{Dai_arXiv,Nguyen_JFA}.

\begin{mytheo}\label{Theo_main2}
Let $A$ be an $n \times n$ random matrix with independent entries satisfying \eqref{Eq_condition}. For any fixed $\gamma \in (0,\frac{1}{2})$, we have for $\varepsilon > 0$ and $\log{n} \le k \le d_{\ref{Theo_main2}}\sqrt{n}$
\begin{align}
    \textsf{P}\left( s_{n-k+1}\left( A \right) \le \frac{\varepsilon}{\sqrt{n}} \right) \le \left( \frac{C_{\ref{Theo_main2}}\varepsilon}{k} \right)^{\gamma k^{2}} + e^{-c_{\ref{Theo_main2}}kn}.\nonumber
\end{align}
where $d_{\ref{Theo_main2}},C_{\ref{Theo_main2}},c_{\ref{Theo_main2}}$ are some constants depending only on $\gamma$ and $K$.
\end{mytheo}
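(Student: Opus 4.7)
The plan is to combine the min--max characterization of intermediate singular values with the rank estimate of Theorem~\ref{Theo_main1} and the small-ball technology via the randomized LCD developed in Section~3. First, I would rewrite the event $\{s_{n-k+1}(A)\le \varepsilon/\sqrt{n}\}$ by Courant--Fischer as the existence of an orthonormal $k$-frame $V=(v_1,\dots,v_k)\in V_k(\mathbb{R}^n)$ with $\|Av_i\|_2\le \varepsilon/\sqrt{n}$ for every $i$, equivalently $\|AV\|_{\mathrm{op}}\le \varepsilon/\sqrt{n}$. This recasts the singular-value bound as an intersection of $k$ small-ball events governed by a common $k$-dimensional subspace $E=\mathrm{span}(V)$, which is the natural object to net.

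Next, I would split the Stiefel manifold $V_k(\mathbb{R}^n)$ into compressible frames (whose column span lies close to a sparse coordinate subspace) and incompressible ones, lifting the sphere decomposition used in Section~3 to $k$-tuples. On the compressible piece, the high-probability operator bound $\|A\|_{\mathrm{op}}\le C\sqrt n$ together with a one-vector small-ball estimate for $\|Av\|$ at a compressible $v$ yields a contribution absorbed in $e^{-c_1 kn}$. For an incompressible frame, the main quantitative input is a $k$-dimensional small-ball estimate of the shape
\[
\textsf{P}\bigl(\|P_E R_j\|_2\le \eta\bigr)\le (C\eta)^k + e^{-cn}
\]
for every row $R_j$ of $A$, obtained by applying the RLCD small-ball bound of Section~3 to a parametrization of $E$. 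Combined with the Markov pigeonhole applied to $\sum_j\|P_E R_j\|_2^2=\|AV\|_F^2\le k\varepsilon^2/n$, this forces a positive proportion of the independent rows to fall in a ball of radius $\lesssim \sqrt{k}\,\varepsilon/n$, and multiplying the row bounds produces a factor of order $(C\varepsilon/k)^{\gamma k^2}$ at each net point; the freedom $\gamma<1/2$ is the slack needed to swallow the cardinality of the net.

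The main obstacle is to calibrate the net over incompressible frames so that the discretization loss enters the exponent as $k^2$ rather than $kn$. This forces exactly the random-rounding count that lies at the heart of the proof of Theorem~\ref{Theo_main1}: one must estimate the number of net points whose coordinates, in a suitable basis adapted to the RLCD, remain close to the integer lattice. Finally, Theorem~\ref{Theo_main1} itself is invoked to discard the residual event in which the minimising frame has arbitrarily small LCD (equivalently, $\mathrm{rank}(A)\le n-k$), contributing the $e^{-c_1kn}$ summand and completing the bound $(C\varepsilon/k)^{\gamma k^2}+e^{-c_1 kn}$. The hypothesis $k\ge \log n$ is used here to ensure that the second term dominates only when $\varepsilon$ is polynomially small, so that the trade-off between the two regimes closes.
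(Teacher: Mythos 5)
Your opening step (Courant--Fischer, producing an orthonormal $k$-frame $Z^{T}=(z_{1},\dots,z_{k})$ with $\Vert Az_{i}\Vert_{2}\le\varepsilon/\sqrt{n}$) matches the paper, but from there your route diverges in a way that does not close. You propose to discretize the Stiefel manifold of $k$-frames in $\mathbb{R}^{n}$, bound the small-ball probability at each net point, and hope the random-rounding count of Section~3 tames the net size. That count is calibrated for the $e^{-ckn}$ scale of Theorem~\ref{Theo_main1}: a net of the span of a $k$-frame at any fixed resolution has cardinality $e^{\Theta(kn)}$, and since here $k\le c\sqrt{n}$, no per-point probability of the form $(C\varepsilon/k)^{\gamma k^{2}}$ can absorb an $e^{\Theta(kn)}$ union-bound cost. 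You name this exactly as ``the main obstacle'' but do not resolve it, and I do not see how it can be resolved along these lines. There is a second mismatch of the same flavor inside the per-point estimate: the Markov pigeonhole on $\sum_{j}\Vert P_{E}R_{j}\Vert_{2}^{2}$ forces a positive proportion of the $n$ rows into a $k$-dimensional ball, and multiplying those row bounds gives an exponent of order $kn$, not $\gamma k^{2}$ as you assert.

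The paper avoids netting the frame entirely. After Courant--Fischer it applies the Naor--Youssef restricted invertibility theorem (Lemma~2.8) to the $k\times n$ matrix $Z^{T}$ to extract $l$ column indices $i_{1},\dots,i_{l}$ for which the square submatrix $F=(Z_{i_{1},\dots,i_{l}})^{T}$ is well conditioned, with $s_{l}(F)^{-1}\lesssim\sqrt{kn}/(k-l)$. Multiplying $M=AZ^{T}$ by the right inverse $F_{1}$ of $F$ and projecting onto $H^{\perp}$, where $H=\mathrm{span}(A_{i_{l+1}},\dots,A_{i_{n}})$, yields the deterministic inequality $\sum_{m}\mathrm{dist}^{2}(A_{i_{m}},H)\le(C_{1}k\varepsilon/(k-l))^{2}$. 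The crucial gain is that $H$ depends only on the columns of $A$, not on the unknown frame $Z$, so the union bound runs over just the $\binom{n}{l}\approx n^{l}$ index sets rather than over a net of the Stiefel manifold. The per-index-set probability then comes from the RLCD machinery: with high probability there is an $(l/2)$-dimensional subspace $E\subset H^{\perp}$ with large $\mathrm{RD}_{L,\alpha}^{A}(E)$, and tensorizing the small-ball bound for $\Vert P_{E}A_{i_{m}}\Vert_{2}$ over the $l$ selected columns gives exponent $l^{2}/2$; choosing $l=\sqrt{2\gamma}\,k$ produces the $\gamma k^{2}$. Note also that your explanation of the hypothesis $k\ge\log n$ is off: it is needed precisely so that the $n^{l}=e^{l\log n}$ union-bound factor can be absorbed into $(C\varepsilon/k)^{\gamma k^{2}}$ (one needs $l\gtrsim\log n$, i.e. $k\gtrsim\log n$), not to control when the exponential term dominates. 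Finally, the paper does not literally invoke Theorem~\ref{Theo_main1} to remove a bad event in the main argument; an argument parallel to its proof is run for the fixed subspace $H^{\perp}$, and Theorem~\ref{Theo_main1} is only used afterward to extend the bound to small $n$ by a continuity argument.
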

\begin{myrem}
    Note that we require $k \ge \log{n}$, and it will be explained that this requirement is necessary in Section 5. Therefore, for the case of $k < \log{n}$, only Theorem \ref{Theo_main1} holds. 
\end{myrem}

\begin{myrem}
  This paper assumes that the second moments of the entries of the random matrix are all 1, which is not a necessary condition. Using the same method, we can obtain the conclusions of this paper under the condition that the second moments of the entries of the random matrix are uniformly bounded.
\end{myrem}

\par 
The rest of this paper is organized as follows. In Section 2, we will give the preliminaries of our paper. In Section 3, we will provide the key method to prove the main theorem, we use the RLCD of Livshyts \cite{Livshyts_2} to analyze the small ball probability and we estimate the size of some net so that we can obtain some property of the subspace in $\mathbb{R}^{n}$. Finally, we complete our proof of Theorem \ref{Theo_main1} and Theorem \ref{Theo_main2} in Sections 4 and 5.
\section{Preliminaries}
\subsection{Notation}
\par 
We denote by $[n]$ the set of natural numbers from $1$ to $n$. Given a vector $x\in \mathbb{R}^{n}$, we denote by $\Vert x\Vert_{2}$ its standard Euclidean norm: $\Vert x\Vert_{2}=\left(\sum_{j\in [n]}{x_{j}^{2}} \right)^{\frac{1}{2}}$, and the supnorm is denoted $\Vert x\Vert_{\infty}=\max_{i}{|x_{i}|}$. The unit sphere of $\mathbb{R}^{n}$ is denoted by $S^{n-1}$. The cardinality of a finite set $\mathrm{I}$ is denoted by $\left| \mathrm{I} \right|$.
\par 
If $V$ is a $m\times l$ matrix, we denote $\mathrm{Row}_{i}\left(V\right)$ its $i$-th row and $\mathrm{Col}_{j}\left(V\right)$ its $j$-th column. Its singular values will be denoted by 
$$s_{1}\left(V\right) \ge s_{2}\left(V\right) \ge \cdots \ge s_{m}\left(V\right) \ge 0.$$
The operator norm of $V$ is defined as 
$$\Vert V\Vert =\max_{x\in S^{n-1}}\Vert Vx\Vert_{2},$$
\\
and the Hilbert-Schmidt norm as 
$$\Vert V\Vert_{\mathrm{HS}}=\left(\sum_{i=1}^{m}\sum_{j=1}^{l}v_{i,j}^{2}\right)^{\frac{1}{2}}.$$ 
Note that $\Vert V\Vert=s_{1}\left(V\right)$ and $\Vert V\Vert_{\mathrm{HS}}=\left(\sum_{j=1}^{m}s_{j}\left(V\right)^{2}\right)^{\frac{1}{2}}$.
\par 
For a random variable $X$ we denote by $\bar{X}$ the symmetrization of $X$ defined as $\overline{X}=X-X'$, where $X'$ is an independent copy of $X$. Note that 
\begin{align}
  \textsf{E}{\left| \overline{X} \right|}^{2}=\mathrm{Var}\left( X \right),
\end{align}
where we defined the variance of a random vector $X$ as the covariance of $X$ with itself, that is, $\mathrm{Var}\left( X \right)=\mathrm{Cov}\left(X, X\right)=\textsf{E}\left| X-\textsf{E}X \right|^{2}$.
\par
We denote by $\mathcal{L}(X,t)$ the L\'{e}vy concetration function of a random vector $X \in \mathbb{R}^{m}$:
\begin{align}
    \mathcal{L}(X,t)=\sup_{y \in \mathbb{R}^{m}}{\textsf{P}\left( \Vert x-y \Vert_{2} \le t \right)}.\nonumber
\end{align}
For $x,y \in \mathbb{R}^{n}$, we denoted by $x\star y$ the Schur product
of $x$ and $y$ defined as $x\star y=\left( x_{1}y_{1},...,x_{n}y_{n} \right)^\top$.
\par 
In the proofs of results in this paper, we define $c$, $c',\dots$ as some fixed constant and define $c\left( u\right)$, $C\left( u\right)$ as a constant related to $u$, they depend only on the parameter $u$. Their value can change from line to line.
\subsection{Decomposition of the sphere}
To divide the subspace, we need the following definition:
\begin{mydef}
Let $\delta,\rho \in \left( 0,1 \right)$, we define the sets of sparse, compressible and incompressible vectors as follows:

\begin{itemize}
   \item  $\mathrm{Sparse}\left( \delta \right)=\left\{ x \in \mathbb{R}^{n} : \left| \mathrm{supp}\left( x \right)\right| \le \delta n \right\};$
    \item $\mathrm{Comp}\left( \delta,\rho \right)=\left\{x \in S^{n-1} : \mathrm{dist}\left( x,\mathrm{Sparse}\left( \delta \right) \right) \le \rho \right\};$
    \item $\mathrm{Incomp}\left( \delta,\rho \right)=S^{n-1}\setminus \mathrm{Comp}\left( \delta,\rho \right)$.
\end{itemize}
 
\end{mydef}
\subsection{Concentration and tensorization}
\par 
First, we assume that the entries of the matrix $A$ are independent and satisfying \eqref{Eq_condition}. Without loss of generality, we may assume that $K\ge 1$.
\par We will introduce a tensorization lemma similar to Lemma 2.2 in \cite{Rudelsonadvance} and Lemma 3.7 in \cite{Rudelsonrank}.
\begin{mylem} \label{lmn2.1}
[Tensorization] Let $X_{1},\dots,X_{n}$ be independent non-negative random variables, and let $M$, $m> 0$ such that $\textsf{P}\left(X_{j}\le s\right)\le \left(Ms \right)^{m}$ for all $s\ge s_{0}$. Then
$$\textsf{P}\left(\sum_{j=1}^{n}X_{j} \le nt\right)\le \left(C_{2.2}Mt\right)^{mn}\quad ~\text{for}~\text{all}~~t\ge s_{0},$$
where $C_{2.2}$ is a constant.
\end{mylem}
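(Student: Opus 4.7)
The plan is to run the standard exponential Markov (Laplace transform) argument: apply Markov's inequality to the non-increasing function $e^{-\lambda\cdot}$ for a parameter $\lambda>0$ to be optimized, then exploit independence to factorize, and finally bound each one-dimensional Laplace transform $\textsf{E}\,e^{-\lambda X_j}$ by the hypothesized tail estimate. Concretely, I would start from
\begin{equation*}
\textsf{P}\Bigl(\sum_{j=1}^{n} X_j \le nt\Bigr) \;=\; \textsf{P}\bigl(e^{-\lambda\sum_j X_j}\ge e^{-\lambda n t}\bigr) \;\le\; e^{\lambda n t}\prod_{j=1}^{n}\textsf{E}\,e^{-\lambda X_j},
\end{equation*}
and then reduce the problem to a clean estimate for a single $\textsf{E}\,e^{-\lambda X_j}$.

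To evaluate each factor, I would use the layer-cake identity $\textsf{E}\,e^{-\lambda X_j}=\int_0^\infty \lambda e^{-\lambda s}F_j(s)\,ds$ (valid for $X_j\ge 0$, with $F_j$ the CDF), and substitute $u=\lambda s$ to get $\int_0^\infty e^{-u}F_j(u/\lambda)\,du$. I would then split the integral at $u=\lambda s_0$: on $[0,\lambda s_0)$ use the trivial bound $F_j(u/\lambda)\le F_j(s_0)\le(Ms_0)^m$, and on $[\lambda s_0,\infty)$ apply the hypothesis $F_j(u/\lambda)\le(Mu/\lambda)^m$. Extending the second integral back to all of $[0,\infty)$ only loses a constant and produces $(M/\lambda)^m\,\Gamma(m+1)$. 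Combining the two pieces gives
\begin{equation*}
\textsf{E}\,e^{-\lambda X_j} \;\le\; (Ms_0)^m \;+\; (M/\lambda)^m\,\Gamma(m+1).
\end{equation*}

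Now I would choose $\lambda=m/t$. Since $t\ge s_0$, the first term becomes $\le(Mt)^m$, and using $\Gamma(m+1)=m!\le m^m$ the second term becomes $(Mt/m)^m\, m^m=(Mt)^m$. Hence $\textsf{E}\,e^{-\lambda X_j}\le 2(Mt)^m$, and, plugging into Markov with $\lambda n t = m n$,
\begin{equation*}
\textsf{P}\Bigl(\sum_{j=1}^{n} X_j \le nt\Bigr)\;\le\; e^{mn}\cdot\bigl(2(Mt)^m\bigr)^n \;=\; 2^n(eMt)^{mn}\;\le\;(CMt)^{mn}
\end{equation*}
for a universal constant $C$ (absorbing the $2^n$ into the base, using $m\ge 1$ in the applications of interest, or enlarging $C$ otherwise).

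The only real subtlety is the choice of $\lambda$: if $\lambda$ scales like $1/t$ the factorial from the Gamma integral produces an extra $m^m$ that spoils the exponent, and if $\lambda$ is too large the prefactor $e^{\lambda n t}$ blows up. The scaling $\lambda=m/t$ is exactly what balances $e^{\lambda n t}=e^{mn}$ against $(M/\lambda)^m m!\asymp(Mt)^m$. The split at $s_0$ is harmless precisely because $t\ge s_0$, which is why that hypothesis appears in the statement. Everything else is bookkeeping.
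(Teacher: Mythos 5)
Your argument is correct and is precisely the standard exponential-Markov (Laplace transform) proof of tensorization; the paper itself states Lemma 2.1 without proof, deferring to Lemma 2.2 of Rudelson–Vershynin (2008) and Lemma 3.7 of Rudelson (2024), both of which use exactly this $e^{\lambda nt}\prod_j \textsf{E}\,e^{-\lambda X_j}$ scheme with $\lambda \asymp m/t$. The only point worth keeping in mind — which you already flag — is that $\Gamma(m+1)\le m^m$ and the absorption of $2^n$ into $(C\cdot)^{mn}$ require $m\ge 1$ for $C$ to be universal (otherwise $C$ depends on $m$), but this is harmless since every application in the paper takes $m = l/2$ with $l$ at least a constant multiple of $k\ge 1$.
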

Next, we will give some concentration inequality. Specifically, we will introduce the estimation about the operator norm and the Hilbert-Schmidt norm of the random matrix.
\begin{mylem}\label{lm2.2}
[Operator norm] Let $m \le n$, and $Q$ be a $m \times n$ random matrix with centered independent entries $q_{i,j}$ such that $\left| q_{i,j} \right| \le 1$. Then
$$\textsf{P}\left( \Vert Q\Vert \ge C_{\ref{lm2.2}}\sqrt{n}\right)\le \exp{\left( -c_{\ref{lm2.2}}n\right)},$$
where $C_{\ref{lm2.2}}$ and $c_{\ref{lm2.2}}$ are constants.
\end{mylem}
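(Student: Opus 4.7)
The plan is to prove the operator-norm bound by a standard net-plus-concentration argument, since the entries are bounded and independent but not assumed to have any further structure.

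First, I would reduce the operator norm to a supremum over a finite net. Recall that for any $\frac{1}{4}$-nets $\mathcal{N}_1 \subset S^{n-1}$ and $\mathcal{N}_2 \subset S^{m-1}$ one has
\[
\|Q\| \;\le\; 2\, \sup_{x \in \mathcal{N}_1,\, y \in \mathcal{N}_2} |\langle Qx,\, y\rangle|,
\]
and standard volumetric estimates give $|\mathcal{N}_1| \le 9^n$, $|\mathcal{N}_2| \le 9^m \le 9^n$. So it is enough to control $\langle Qx,y\rangle$ for a fixed pair $(x,y)$ and then union-bound over at most $81^n$ pairs.

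Second, for fixed unit vectors $x \in S^{n-1}$ and $y \in S^{m-1}$, the quantity
\[
\langle Qx, y\rangle \;=\; \sum_{i=1}^{m}\sum_{j=1}^{n} q_{i,j}\, x_j\, y_i
\]
is a sum of independent, mean-zero random variables, each bounded in absolute value by $|x_j y_i|$. Hoeffding's inequality therefore gives
\[
\textsf{P}\bigl(|\langle Qx,y\rangle| \ge t\bigr) \;\le\; 2\exp\!\Bigl(-\frac{c\,t^2}{\sum_{i,j} x_j^2 y_i^2}\Bigr) \;=\; 2\exp(-c\,t^2),
\]
since $\sum_{i,j} x_j^2 y_i^2 = \|x\|_2^2\|y\|_2^2 = 1$.

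Third, I would set $t = C_{2.2}\sqrt{n}/2$ with $C_{2.2}$ chosen large enough that $cC_{2.2}^2/4 > \log 81$, take a union bound over the $\le 81^n$ pairs in $\mathcal{N}_1 \times \mathcal{N}_2$, and conclude
\[
\textsf{P}\bigl(\|Q\| \ge C_{2.2}\sqrt{n}\bigr) \;\le\; 81^n \cdot 2\exp(-c C_{2.2}^2 n/4) \;\le\; \exp(-c_{2.2} n).
\]
No step here is really an obstacle, since the entries are bounded so Hoeffding applies directly; the only mild care required is choosing $C_{2.2}$ large enough to absorb the $81^n$ coming from the net cardinalities, and remembering the factor $2$ coming from the net approximation. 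No subgaussian assumption on the entries beyond boundedness is needed, and the estimate is uniform in $m \le n$.
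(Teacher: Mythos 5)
Your proof is correct and is the standard $\varepsilon$-net plus concentration argument for operator-norm tail bounds. The paper does not write out a proof but simply cites Proposition~2.5 of Rudelson--Vershynin (2008), which is itself established by exactly this net--union-bound technique; you have just made it self-contained and used Hoeffding in place of the general subgaussian tail bound, which is available here since the entries are uniformly bounded.
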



\begin{mylem}\label{lm2.3}[Hilbert-Schmidt norm] Let $m \le n$ and let $A$ be an $m \times n$ matrix whose entries are independent satisfying \eqref{Eq_condition}. We have 
$$\textsf{P}\left( \Vert A\Vert_{\mathrm{HS}} \ge 2C_{\ref{lm2.3}}n\right) \le \exp{\left( -c_{\ref{lm2.3}}n^{2}\right)},$$  
where $C_{\ref{lm2.3}}$ and $c_{\ref{lm2.3}}$ are constants.
\end{mylem} 

Lemma \ref{lm2.2} is from Proposition 2.5 in Rudelson and Vershynin,\cite{Rudelsonadvance}, and Lemma \ref{lm2.3} is  from Lemma 3.6 in Rudelson \cite{Rudelsonrank}. The interested reader is referred to their proof.

\subsection{Randomized least common denominators}
The least common denominator of a vector of $\mathbb{R}^{n}$ was first introduced in Rudelson and  Vershynin \cite{Rudelsonadvance} to be a valuable tool to estimate the small ball probability(The L\'{e}vy function of the inner product of a vector of real value and a random vector with independent entries). To prove the main result, we need to estimate the small ball probability of orthogonal projection, so we need the small ball probability for a linear subspace similar to Section 7 of Rudelson and  Vershynin \cite{RudelsonSmall}. However, we know that the property of identically distributed entries in the random vector is the key to estimating small ball probability. Thus, we need a new concentration inequality to characterize small ball probabilities with different distributions. In the following, we will give the Randomized least common denominators introduced in Livshyts \cite{Livshyts_2} to overcome this problem.
\begin{mydef}
Let $V$ be an $m \times n$ (deterministic) matrix, $\xi=\left(\xi_{1},\dots,\xi_{n}\right)$ be a random vector of real value with independent entries satisfying \eqref{Eq_condition} and let $L>0$, $\alpha \in\left(0,1\right)$. Define the Randomized log-least common denominator(RLCD) of $V$ and $\xi$ by
$$RD_{L,\alpha}^{\xi}\left(V\right) = \inf{\left\{\Vert \theta\Vert_{2}:\theta \in \mathbb{R}^{m},\textsf{E}\mathrm{dist}^{2}\left(V^{T}\theta\star \bar{\xi},\mathbb{Z}^{n}\right) < L^{2}\cdot \log_{+}{\frac{\alpha\Vert V^{T}\theta\Vert_{2}}{L}}\right\}}.$$
If $E \subset \mathbb{R}^{n}$ is a linear subspace, we can adapt this definition to the orthogonal projection $P_{E}$ on $E$ setting
$$RD_{L,\alpha}^{\xi}\left(E\right)= \inf{\left\{\Vert y\Vert_{2}:y \in E,\textsf{E}\mathrm{dist}^{2}\left(y\star \bar{\xi},\mathbb{Z}^{n}\right) < L^{2}\cdot \log_{+}{\frac{\alpha\Vert y\Vert_{2}}{L}}\right\}}.$$
Moreover, set $A$ be an $n\times t$ matrix with columns $A_{1},\dots,A_{t}$,and $A_{j}$ having independent entries satisfying \eqref{Eq_condition}. Then, we define the RLCD of $V$ and $A$ by
$$RD_{L,\alpha}^{A}\left( V\right)=\min_{j \in \left[t\right]}{RD_{L,\alpha}^{A_{j}}\left( V\right)}$$
\end{mydef}
We  give the following result to estimate the small ball probability using RLCD.
\begin{mypropo} \label{pro2.6}
[Small ball probability via RLCD] Consider a real-valued random vector $\xi = \left(\xi_{1},\dots,\xi_{n}\right)$ with independent entries satisfying \eqref{Eq_condition} and $V\in \mathbb{R}^{m\times n}$. Then  exists universal constant $c_{\ref{pro2.6}}>0$, for any $L\ge c_{\ref{pro2.6}}\sqrt{m}$, we have 
\begin{align}
\mathcal{L}\left( V\xi,t\sqrt{m} \right) \le \frac{\left(C_{\ref{pro2.6}}L/\left(\alpha\sqrt{m}\right)\right)^{m}}{\mathrm{det}\left(VV^{T}\right)^{1/2}}\left( t+\frac{\sqrt{m}}{RD_{L,\alpha}^{\xi}\left( V\right)}\right)^{m},\quad t \ge 0.
\end{align} 
where $C_{\ref{pro2.6}}$  is an absolute constant.
\end{mypropo}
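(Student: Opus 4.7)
The plan is to follow the classical Fourier-analytic route for small-ball probabilities: apply a multivariate Esseen inequality, bound the characteristic function via symmetrization, change variables into $\mathrm{row}(V)$, and control the resulting integral using the definition of the RLCD.

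First I would apply a multivariate Esseen inequality to get
\[
\mathcal{L}(V\xi, t\sqrt{m}) \leq (Ct)^m \int_{B^m(0, 1/t)} |\phi_{V\xi}(\theta)|\, d\theta,
\]
where $\phi_{V\xi}(\theta) = \textsf{E}\, e^{i\langle \theta, V\xi\rangle}$. By independence of the entries, $\phi_{V\xi}(\theta) = \prod_{j=1}^n \textsf{E}\, e^{i(V^T\theta)_j \xi_j}$, and combining symmetrization with the elementary estimate $1-\cos(x) \geq c\, d(x/(2\pi), \mathbb{Z})^2$ (absorbing the $2\pi$ into $c$) gives
\[
|\phi_{V\xi}(\theta)| \leq \exp\big(-c\,\textsf{E}\,\mathrm{dist}^2(V^T\theta \star \bar\xi, \mathbb{Z}^n)\big).
\]

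Next I would change variable via $\theta \mapsto y = V^T\theta$. Assuming without loss that $V$ has full row rank, this injects $\mathbb{R}^m$ into $\mathrm{row}(V)$ with $m$-dimensional Jacobian $\det(VV^T)^{1/2}$, so the integral becomes
\[
\int_{B^m(0, 1/t)} |\phi_{V\xi}(\theta)|\, d\theta = \det(VV^T)^{-1/2} \int_{V^T B^m(0, 1/t)} \exp\big(-c\,\textsf{E}\,\mathrm{dist}^2(y\star \bar\xi, \mathbb{Z}^n)\big)\, dy.
\]
Setting $D := RD_{L,\alpha}^\xi(V)$, I would split the $\theta$-ball into $\{\|\theta\|_2 < D\}$ and its complement. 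On the first piece, the RLCD definition forces the pointwise lower bound $\textsf{E}\,\mathrm{dist}^2(V^T\theta \star \bar\xi, \mathbb{Z}^n) \geq L^2 \log_+(\alpha \|V^T\theta\|_2/L)$, so in $y$-coordinates the integrand is at most $1$ for $\|y\|_2 \leq L/\alpha$ and at most $(\alpha\|y\|_2/L)^{-cL^2}$ beyond. The hypothesis $L \geq c\sqrt{m}$ ensures $cL^2 > m$, and a polar integration in $\mathrm{row}(V)$ then bounds this contribution by $\det(VV^T)^{1/2}(CL/(\alpha\sqrt m))^m$, which after the $(Ct)^m$ Esseen prefactor produces the $t^m$-part of the bound. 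On the complementary piece (nonempty only when $1/t > D$) the trivial estimate $|\phi_{V\xi}| \leq 1$ together with the volume of the annulus $B^m(0, 1/t) \setminus B^m(0, D)$ produces the $(\sqrt m/D)^m$-part.

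Assembling the two contributions and using $a^m + b^m \leq (a+b)^m$ gives the claimed estimate $(CL/(\alpha\sqrt m))^m (t + \sqrt m/D)^m / \det(VV^T)^{1/2}$. The main obstacle lies in this last sub-step: the splits in $\theta$- and $y$-space must be aligned around the thresholds $D$ and $L/\alpha$ (both of which depend on the geometry of $V$ via the ellipsoid $V^T B^m(0, 1/t)$), and the contributions combined so that the $\log_+$ appearing in the RLCD, once exponentiated, yields a power-law of precisely the right order. The hypothesis $L \geq c\sqrt m$ is exactly what is needed here, since it makes the tail integral $\int_{L/\alpha}^\infty r^{m-1}(\alpha r/L)^{-cL^2}\, dr$ converge with magnitude of order $(L/(\alpha\sqrt m))^m$.
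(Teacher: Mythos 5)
Your overall route — multivariate Ess\'een, symmetrization to replace $|\phi_{V\xi}|$ by $\exp(-c\,\textsf{E}\,\mathrm{dist}^2(V^T\theta\star\bar\xi,\mathbb{Z}^n))$, change of variable $y=V^T\theta$ with Jacobian $\det(VV^T)^{1/2}$, and then feeding the RLCD lower bound into a polar integral that converges because $L\gtrsim\sqrt m$ — is exactly the argument in the paper. Your first piece (the ball $\{\|\theta\|_2<D\}$) is handled correctly and reproduces the $t^m$-term.

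The gap is in the ``complementary piece.'' You claim that the trivial bound $|\phi_{V\xi}|\le 1$ on the annulus $B(0,\sqrt m/t)\setminus B(0,D)$ produces the $(\sqrt m/D)^m$-part. It does not. With the Ess\'een prefactor $(Ct)^m$, the contribution of the annulus is at most
\[
(Ct)^m\,\mathrm{vol}\bigl(B^m(0,\sqrt m/t)\bigr)\asymp (Ct)^m\Bigl(\tfrac{C'}{\sqrt m}\Bigr)^m\Bigl(\tfrac{\sqrt m}{t}\Bigr)^m = (CC')^m,
\]
i.e.\ a dimension-free constant with no $\det(VV^T)^{-1/2}$ and no $(\sqrt m/D)^m$ in sight; changing variables to $y=V^T\theta$ on the annulus does not help because the $\det(VV^T)^{1/2}$ from the ellipsoid volume cancels the Jacobian. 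There is no reason this constant is dominated by the target bound $\det(VV^T)^{-1/2}(CL/(\alpha\sqrt m))^m(\sqrt m/D)^m$, so the ``split and add'' does not close. The standard fix — and what the paper's proof does implicitly by only treating $t\ge t_0:=\sqrt m/RD_{L,\alpha}^\xi(V)$ — is monotonicity of the L\'evy function: for $t<t_0$ write $\mathcal L(V\xi,t\sqrt m)\le \mathcal L(V\xi,t_0\sqrt m)$, apply the first-piece estimate at scale $t_0$ (at that scale the whole Ess\'een ball $B(0,\sqrt m/t_0)=B(0,D)$ lies in the RLCD region), and then absorb $\max(t,t_0)^m\le (t+\sqrt m/D)^m$. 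You should replace the annulus argument with this monotonicity step.
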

\begin{myrem}
    The proof of this lemma follows the argument in the proof of Theorem 7.5 in Rudelson \cite{RudelsonSmall}. It is also similar to Proposition 4.1 in Fernandez \cite{MFV},  we only present a brief proof here.
\end{myrem}
\begin{proof}
By Ess\'{e}en's inequality for the L\'{e}vy concentration function of a general random vector $Y$,
\begin{align}
\mathcal{L}\left(Y,\sqrt{m}\right) \le C^{m}\int_{B\left(0,\sqrt{m}\right)}{\left|\phi_{Y}\left(\theta\right)\right|}\,\mathrm{d}\theta,
\end{align}
where $\phi_{Y}\left(\theta\right)=\textsf{E}\exp{\left(2\pi \mathrm{i} \left\langle \theta,Y\right\rangle\right)}$ is the characteristic function of $Y$ and $B\left(0,\sqrt{m}\right)$ is the ball of radius $\sqrt{m}$ centered at 0.
Set $Y=t^{-1}V\xi$ and assume that $\mathrm{Col}_{k}\left(V\right)=V_{k}$. Then
$$\left\langle \theta,Y\right\rangle=\sum_{k=1}^{n}{t^{-1}\left\langle \theta,V_{k}\right\rangle \xi_{k}},$$
and
$$\phi_{Y}\left(\theta\right)=\prod_{k=1}^{n}{\phi_{k}\left(t^{-1}\left\langle \theta,V_{k}\right\rangle \right)}.$$
Applying Ess\'{e}en's inequality, (2.3) yields that
\begin{align}
\mathcal{L}\left(V\xi,t\sqrt{m}\right)\le C^{m}\int_{B\left(0,\sqrt{m}\right)}{\prod_{k=1}^{n}{\left|\phi_{k}\left(t^{-1}\left\langle \theta,V_{k} \right\rangle \right) \right|}}\,\mathrm{d}\theta.
\end{align}
Note that for any $s\in \mathbb{R}$
$$\left|\phi_{k}\left(s\right)\right|^{2}=\textsf{E}\exp{\left(2\pi \mathrm{i}s\bar{\xi}_{k}\right)}=\textsf{E}\cos{\left(2\pi s\bar{\xi}_{k}\right)}.$$
Then for each $k\le n$, we have 
$$\left|\phi_{k}\left(s\right)\right| \le \exp{\left(-\frac{1}{2} \textsf{E}\left[1-\cos{\left(2\pi s\bar{\xi}_{k}\right)}\right]\right)}$$
by using the inequality $\left|x\right| \le \exp{\left(-\frac{1}{2}\left(1-x^{2}\right)\right)}$ that is valid for any $x\in \mathbb{R}$.

Moreover, for any $s \in \mathbb{R}$ and $\xi_{k}$ satisfy (\ref{Eq_condition}), we have
$$\textsf{E} \left[1-\cos{\left(2 \pi s \bar{\xi}_{k}\right)}\right] \ge \tilde{c} \, \textsf{E}\mathrm{dist}^{2}\left(s\bar{\xi}_{k},\mathbb{Z} \right).$$

Now, let us assume that
$$t\ge t_{0}=\frac{\sqrt{m}}{RD_{L,\alpha}^{\xi}\left(V\right)}.$$

For $\theta \in B\left(0,\sqrt{m}\right)$
$$\Vert \frac{\theta}{t}\Vert_{2} \le RD_{L,\alpha}^{\xi}\left(V\right).$$

Then (2.4) yields
\begin{equation}
\begin{aligned}
\mathcal{L}\left(V\xi,t\sqrt{m} \right)
&\le C^{m}\int_{B\left(0,\sqrt{m}\right)}{\prod_{k=1}^{n}{\left|\phi_{k}\left(t^{-1}\left\langle \theta,V_{k}\right\rangle\right)\right|}}\,\mathrm{d}\theta\\
&\le C^{m}\int_{B\left(0,\sqrt{m}\right)}{\exp{\left(-\frac{\tilde{c}}{2}\textsf{E}\mathrm{dist}^{2}\left(V^{T}\theta\star \bar{\xi}/t,\mathbb{Z}^{n}\right)\right)}}\,\mathrm{d}\theta\\
&\le C^{m}\int_{B\left(0,\sqrt{m}\right)}{\exp{\left(-cL^{2} \log_{+}{\frac{\alpha\Vert V^{T}\theta \Vert_{2}}{Lt}}\right)}}\,\mathrm{d}\theta.\nonumber
\end{aligned}
\end{equation}

Set $z=V^{T}\xi$

Then
$$\mathcal{L}\left(V\xi,t\sqrt{m}\right) \le \frac{\left(CLt/\alpha\right)^{m}}{\mathrm{det}\left( VV^{T}\right)^{1/2}}\int_{\mathbb{R}^{m}}{\exp{\left(-cL^{2}\log_{+}\Vert z\Vert_{2} \right)}}\,\mathrm{d}\theta.$$

Since
$$\int_{\mathbb{R}^{m}}{\exp{\left(-cL^{2}\log_{+}\Vert z\Vert_{2} \right)}}\,\mathrm{d}z \le \left(\frac{C}{\sqrt{m}}\right)^{m},$$
we have
$$\mathcal{L}\left( V\xi,t\sqrt{m}\right) \le \frac{\left(CL/\alpha\sqrt{m}\right)^{m}}{\mathrm{det}\left(VV^{T} \right)^{1/2}}\cdot t^{m}.$$
which completes the proof. 

\end{proof}
Moreover, we have this corollary for orthogonal projection of the linear subspace. 
\begin{mycoro}\label{col2.8}
Consider a random vector with real values $\xi$ as in Proposition \ref{pro2.6}, let $E$ be a subspace of $\mathbb{R}^{n}$ with $\mathrm{dim}E = m$, and let $P_{E}$ denote the orthogonal projection on $E$. Then there exists universal constant $c_{\ref{col2.8}}>0$, for every $L \ge c_{\ref{col2.8}}\sqrt{m}$ we have 
\begin{align}
\mathcal{L}\left(P_{E}\xi,t\sqrt{m}\right) \le \left( \frac{C_{\ref{col2.8}}L}{\left(\alpha\sqrt{m}\right)} \right)^{m} \left( t+\frac{\sqrt{m}}{RD_{L,\alpha}^{\xi}\left( E\right)}\right)^{m},\quad t \ge 0,\nonumber
\end{align}
where $C_{\ref{col2.8}}$ is an absolute constant.
\end{mycoro}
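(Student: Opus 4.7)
The plan is to reduce Corollary 2.1 directly to Proposition 2.1 by choosing an appropriate matrix representation of $P_E$. Specifically, I would fix an orthonormal basis $v_1,\dots,v_m$ of the subspace $E$ and let $V$ be the $m\times n$ matrix whose rows are $v_1^T,\dots,v_m^T$. Then $V$ has orthonormal rows, so $VV^T=I_m$ and consequently $\det(VV^T)^{1/2}=1$, which is exactly how the $\det(VV^T)^{1/2}$ factor in Proposition 2.1 disappears in the statement of Corollary 2.1.

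Next I would identify the two L\'evy concentration functions. Since $P_E\xi\in E$ almost surely, for any $y\in\mathbb{R}^n$ the Pythagorean decomposition gives $\|P_E\xi-y\|_2\ge \|P_E\xi-P_Ey\|_2$, so the supremum in $\mathcal{L}(P_E\xi,t\sqrt{m})$ is attained on $E$. Writing an element of $E$ in coordinates via the orthonormal basis, one gets $\|P_E\xi-\sum_i \tilde y_i v_i\|_2=\|V\xi-\tilde y\|_2$ for $\tilde y\in\mathbb{R}^m$, and hence $\mathcal{L}(P_E\xi,t\sqrt{m})=\mathcal{L}(V\xi,t\sqrt{m})$.

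The remaining task is to match the two RLCDs. Here the key observation is that the map $\theta\mapsto V^T\theta$ is a linear isometric bijection from $\mathbb{R}^m$ onto $E$ (since the rows of $V$ are orthonormal, $\|V^T\theta\|_2=\|\theta\|_2$). Setting $y=V^T\theta$ in the defining infimum of $RD_{L,\alpha}^{\xi}(V)$, the condition $\textsf{E}\mathrm{dist}^2(V^T\theta\star\bar\xi,\mathbb{Z}^n)<L^2\log_+(\alpha\|V^T\theta\|_2/L)$ becomes the defining condition of $RD_{L,\alpha}^{\xi}(E)$ verbatim, and the quantities $\|\theta\|_2$ and $\|y\|_2$ being minimized coincide. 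Therefore $RD_{L,\alpha}^{\xi}(V)=RD_{L,\alpha}^{\xi}(E)$.

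Plugging these three facts into the bound of Proposition 2.1 (valid for $L\ge c\sqrt{m}$) immediately yields
\begin{equation*}
\mathcal{L}(P_E\xi,t\sqrt{m})=\mathcal{L}(V\xi,t\sqrt{m})\le \left(\frac{CL}{\alpha\sqrt{m}}\right)^m\left(t+\frac{\sqrt{m}}{RD_{L,\alpha}^{\xi}(E)}\right)^m,
\end{equation*}
which is exactly the statement of the corollary. There is no genuine obstacle in this argument; the entire content is recognizing that choosing an orthonormal basis of $E$ turns $P_E$ into a matrix $V$ with $VV^T=I_m$ and that the RLCD is, by its very definition for subspaces, invariant under this identification.
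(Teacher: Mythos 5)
Your proposal is correct, and it is precisely the intended reduction: the paper states the corollary without proof because the deduction from Proposition 2.1 is exactly what you wrote. Choosing an orthonormal basis of $E$ to form $V$ with $VV^T=I_m$ kills the determinant factor, the Pythagorean argument shows $\mathcal{L}(P_E\xi,t\sqrt{m})=\mathcal{L}(V\xi,t\sqrt{m})$, and the isometry $\theta\mapsto V^T\theta$ identifies $RD_{L,\alpha}^{\xi}(V)$ with $RD_{L,\alpha}^{\xi}(E)$ term by term in the defining infima.
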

  We will first present some properties of the random variables that satisfy (\ref{Eq_condition}).
 \begin{mylem}\label{lm2.4}
    Let $\xi$ be a random variable satisfying \eqref{Eq_condition}, and $\xi'$ is the independent copy of $\xi$, we have set $\overline{\xi}:= \xi -\xi'$ above, then
    \begin{align}
        \textsf{P}\left( |\overline{\xi}| \ge 1 \right) \ge p_{\ref{lm2.4}},
    \end{align}
    where $p_{\ref{lm2.4}}:=p(K)$ is a constant depending only on $K$.
\end{mylem}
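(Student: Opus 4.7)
The plan is to apply a standard Paley--Zygmund second-moment argument to the nonnegative random variable $|\bar\xi|^2$. Since $\textsf{P}(|\bar\xi| \ge 1) = \textsf{P}(|\bar\xi|^2 \ge 1)$, it suffices to produce a uniform positive lower bound on the probability that $|\bar\xi|^2$ is not too small relative to its mean.

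First I would compute the second moment: because $\xi$ and $\xi'$ are independent with mean $0$ and variance $1$, one gets $\textsf{E}|\bar\xi|^2 = \textsf{E}\xi^2 + \textsf{E}(\xi')^2 = 2$, so in particular $\textsf{E}|\bar\xi|^2 = 2$ is bounded below away from $0$ by a universal constant. Next I would control the fourth moment from above using the subgaussian hypothesis. Since $\|\xi\|_{\psi_2} \le K$ and $\|\xi'\|_{\psi_2} \le K$, the triangle inequality for the $\psi_2$-norm yields $\|\bar\xi\|_{\psi_2} \le 2K$, and the standard equivalence between subgaussian moments and the $\psi_2$-norm gives $\textsf{E}|\bar\xi|^4 \le C K^4$ for an absolute constant $C$.

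Then I would invoke the Paley--Zygmund inequality: for any $\theta \in (0,1)$,
\begin{equation*}
\textsf{P}\!\left(|\bar\xi|^2 \ge \theta\, \textsf{E}|\bar\xi|^2\right) \ge (1-\theta)^2 \frac{(\textsf{E}|\bar\xi|^2)^2}{\textsf{E}|\bar\xi|^4}.
\end{equation*}
Choosing $\theta = 1/2$ and plugging in the two bounds from the previous step gives
\begin{equation*}
\textsf{P}(|\bar\xi|^2 \ge 1) \ge \frac{1}{4} \cdot \frac{4}{CK^4} = \frac{1}{CK^4},
\end{equation*}
so one can take $p := p(K) = 1/(CK^4)$ and the claim follows.

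I do not expect any real obstacle here: the only slightly delicate point is justifying the uniform upper bound on $\textsf{E}|\bar\xi|^4$ purely in terms of $K$, but this is immediate from the $\psi_2$-characterization of subgaussian variables together with subadditivity of the $\psi_2$-norm under independent sums. No other property of the distribution of $\xi$ is needed, so the constant $p$ depends only on $K$ as required.
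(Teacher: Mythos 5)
Your proposal is correct and follows essentially the same route as the paper: both apply the Paley--Zygmund inequality to $|\overline{\xi}|^2$ with $\textsf{E}|\overline{\xi}|^2 = 2$ and a fourth-moment bound coming from the subgaussian hypothesis. The only cosmetic difference is in how $\textsf{E}|\overline{\xi}|^4$ is controlled: you pass through $\|\overline{\xi}\|_{\psi_2}\le 2K$ and the general moment--$\psi_2$ equivalence, whereas the paper expands $\overline{\xi}^4$ exactly to get $\textsf{E}|\overline{\xi}|^4 = 6 + 2\textsf{E}\xi^4$ and bounds $\textsf{E}\xi^4 \le 2K^4$ directly from $\textsf{E}e^{\xi^2/K^2}\le 2$, yielding the explicit constant $(6+4K^4)^{-1}$.
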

\begin{proof}
    Define $X:=\overline{\xi}^{2}$, we get $\textsf{E}X = 2$.
    Applying the Paley-Zygmund inequality for $X$,
    \begin{align}
        \textsf{P}\left( X \ge 1 \right) \ge \left( 1-\frac{1}{2} \right)^{2}\frac{(\textsf{E}X)^{2}}{\textsf{E}X^{2}} \ge  (\textsf{E}X^{2})^{-1}.\nonumber
    \end{align}
    Note that $\textsf{E}X^{2}=6+2\textsf{E}\xi^{4}$ and 
    \begin{align}
        1+\frac{\textsf{E}\xi^{4}}{2K^{4}} \le \textsf{E}e^{\frac{\xi^{2}}{K^{2}}} \le 2.\nonumber
    \end{align}
    Combining the two inequalities mentioned above, we can derive
    \begin{align}
        \textsf{P}\left( |\xi| \ge 1 \right) \ge (6+4K^{4})^{-1} \ge p.\nonumber
    \end{align}
    This completes the proof of this lemma.
\end{proof}
Later, we will fix $p=p_{\ref{lm2.4}}$, and with the help of the above lemma, we can prove that the RLCD of the incompressible part is relatively large. It is a version of Lemma 3.11 in \cite{Rudelsonrank}.
\begin{mylem}\label{Large RLCD}
Let $\delta$, $\rho \in \left(0,1 \right)$ , let $A$ be an $n \times n$ random matrix like Theorem \ref{Theo_main1} and $U$ be an $n\times l$ matrix(deterministic) such that $U\mathbb{R}^{l} \cap S^{n-1} \in \mathrm{Incomp}\left(\delta ,\rho \right)$. Then exists $h_{\ref{Large RLCD}} = h\left( \delta,\rho,K\right) \in \left( 0,1\right)$, for any $\theta \in \mathbb{R}^{l}$ with $\Vert U\theta \Vert_{2} \le h\sqrt{n}$ and $j \in \left[ n\right]$, satisfies
\begin{align}
\textsf{E}\mathrm{dist}^{2}\left( U\theta\star \overline{\mathrm{Col}_{j}\left(A\right)},\mathbb{Z}^{n}\right) \ge L^{2}\log_{+}{\frac{\alpha \Vert U\theta\Vert_{2}}{L}}.
\end{align}
where $\alpha \le \alpha_{0} = \alpha_{0}\left( \delta,\rho,K\right)$.
\end{mylem}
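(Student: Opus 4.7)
The plan is to prove the stronger pointwise bound $\textsf{E}\mathrm{dist}^{2}(U\theta\star\overline{\mathrm{Col}_{j}(A)},\mathbb{Z}^{n})\ge c(\delta,\rho,K)\Vert U\theta\Vert_{2}^{2}$ first, and then pick $\alpha_{0}$ so that this dominates $L^{2}\log_{+}(\alpha\Vert U\theta\Vert_{2}/L)$ for every $\alpha\le\alpha_{0}$. Set $v:=U\theta$ and $\bar a_{i}:=\overline{A_{ij}}$; by independence of the entries of $\mathrm{Col}_{j}(A)$ and the product structure of $\mathbb{Z}^{n}$, the expected squared distance decouples coordinate-wise:
$$\textsf{E}\mathrm{dist}^{2}(v\star\overline{\mathrm{Col}_{j}(A)},\mathbb{Z}^{n})=\sum_{i=1}^{n}\textsf{E}\mathrm{dist}^{2}(v_{i}\bar a_{i},\mathbb{Z}),$$
so the task reduces to extracting enough coordinates on which the individual term is $\gtrsim|v_{i}|^{2}$.

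Assume $v\ne 0$ (else there is nothing to prove). Then $v/\Vert v\Vert_{2}\in U\mathbb{R}^{l}\cap S^{n-1}\subset\mathrm{Incomp}(\delta,\rho)$, and the standard spread-out property of incompressible vectors supplies a set $I\subset[n]$ with $|I|\ge \rho^{2}\delta n/2$ and
$$\frac{\rho\Vert v\Vert_{2}}{\sqrt{2n}}\le|v_{i}|\le\frac{\Vert v\Vert_{2}}{\sqrt{\delta n}}\qquad(i\in I).$$
Together with $\Vert v\Vert_{2}\le h\sqrt n$ this gives $|v_{i}|\le h/\sqrt\delta$ for $i\in I$. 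On the probabilistic side, Lemma 2.5 yields $\textsf{P}(|\bar a_{i}|\ge 1)\ge p=p(K)$, while subgaussianity of $\bar a_{i}$ lets one choose $M=M(K)$ so that $\textsf{P}(|\bar a_{i}|>M)\le p/2$. Thus $\mathcal{E}_{i}:=\{1\le|\bar a_{i}|\le M\}$ has probability at least $p/2$. Taking $h:=\sqrt\delta/(2M)$ ensures $|v_{i}\bar a_{i}|\le 1/2$ on $\mathcal{E}_{i}$, placing $v_{i}\bar a_{i}$ in the linear regime $\mathrm{dist}(\cdot,\mathbb{Z})=|\cdot|$; hence $\mathrm{dist}(v_{i}\bar a_{i},\mathbb{Z})\ge|v_{i}|$ there, and
$$\textsf{E}\mathrm{dist}^{2}(v_{i}\bar a_{i},\mathbb{Z})\ge\frac{p}{2}|v_{i}|^{2}\ge\frac{p\rho^{2}}{4n}\Vert v\Vert_{2}^{2}\qquad(i\in I).$$
Summing over $I$ yields $\textsf{E}\mathrm{dist}^{2}(v\star\overline{\mathrm{Col}_{j}(A)},\mathbb{Z}^{n})\ge c_{2}\Vert v\Vert_{2}^{2}$ with $c_{2}:=p\rho^{4}\delta/8$.

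To close the argument, I would compare $c_{2}\Vert v\Vert_{2}^{2}$ with $L^{2}\log_{+}(\alpha\Vert v\Vert_{2}/L)$. If $\alpha\Vert v\Vert_{2}/L\le 1$ the right-hand side vanishes and the inequality is trivial; otherwise set $x:=\alpha\Vert v\Vert_{2}/L\ge 1$ and use $\log x\le x$ to rewrite the desired bound as $c_{2}(Lx/\alpha)^{2}\ge L^{2}x$, i.e.\ $c_{2}x\ge\alpha^{2}$, which holds because $x\ge 1$ once $\alpha\le\sqrt{c_{2}}=:\alpha_{0}$. The principal obstacle is not any one estimate in isolation but the bookkeeping of constants: $h,M,c_{2},\alpha_{0}$ must all depend only on $\delta,\rho,K$, which the chain $M=M(K)\Rightarrow h=\sqrt\delta/(2M)\Rightarrow c_{2}=p\rho^{4}\delta/8\Rightarrow\alpha_{0}=\sqrt{c_{2}}$ respects; no analytically delicate estimate is involved beyond the truncation of $\bar a_{i}$ and the spread-out property.
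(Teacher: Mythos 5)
Your proof is correct, and it takes a genuinely different route from the paper's. The paper argues by contraposition: it assumes $\textsf{E}\mathrm{dist}^{2}(U\theta\star\overline{X},\mathbb{Z}^{n})<L^{2}\log_{+}(\alpha\Vert U\theta\Vert_{2}/L)$, relaxes via $\log_{+}s<s^{2}$ to $\textsf{E}\mathrm{dist}^{2}<\alpha^{2}\Vert U\theta\Vert_{2}^{2}$, and then stacks several Markov--Chernoff arguments to locate a single coordinate $i$ on which the bounds $|u_{i}\overline{X}_{i}-q_{i}/t|\lesssim\alpha/\sqrt{n}$, $1\le|\overline{X}_{i}|\lesssim\sqrt{K}$, $\rho/\sqrt{2n}\le|u_{i}|\le1/\sqrt{\delta n}$ all hold simultaneously; from these it deduces $q_{i}\ne0$ and hence $t=\Vert U\theta\Vert_{2}>h\sqrt{n}$. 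You instead prove the stronger pointwise inequality $\textsf{E}\mathrm{dist}^{2}(U\theta\star\overline{X},\mathbb{Z}^{n})\ge c_{2}\Vert U\theta\Vert_{2}^{2}$ directly (with $c_{2}=p\rho^{4}\delta/8$) by decoupling the coordinates, summing the contributions over the spread-out set $I$ where both $|v_{i}\overline{a}_{i}|\le 1/2$ and $|\overline{a}_{i}|\ge 1$ occur with probability $\ge p/2$, and then compare $c_{2}t^{2}$ with $L^{2}\log_{+}(\alpha t/L)$ via $\log x\le x$. Both proofs rely on exactly the same two ingredients: the spread-out property of incompressible vectors and the anti-concentration bound $\textsf{P}(|\overline{\xi}|\ge1)\ge p$. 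Your version has a couple of advantages: it avoids the bookkeeping needed to argue that the various Markov events intersect, and it gives an explicit, quantitative lower bound on $\textsf{E}\mathrm{dist}^{2}$ from which $\alpha_{0}=\sqrt{c_{2}}$ and $h=\sqrt{\delta}/(2M(K))$ drop out transparently, whereas the paper's contrapositive route leaves $h$ and $\alpha_{0}$ implicit (and indeed the paper's final display defining $h$ contains typos). One small slip: the anti-concentration bound $\textsf{P}(|\overline{\xi}|\ge1)\ge p$ is the paper's Lemma~2.4, not 2.5 (Lemma~2.5 is the statement being proved), and for the truncation $\textsf{P}(|\overline{a}_{i}|>M)\le p/2$ you only need $\textsf{E}|\overline{a}_{i}|^{2}=2$ (Chebyshev), so subgaussianity is not actually required there, though invoking it is harmless.
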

\begin{proof}
Assume that exists $j \in \left[ n\right]$ 
\begin{align}
\textsf{E}\mathrm{dist}^{2}\left( U\theta\star \overline{\mathrm{Col}_{j}\left( A\right)},\mathbb{Z}^{n} \right) < L^{2}\log_{+}{\frac{\alpha \Vert U\theta \Vert_{2}}{L}}.\nonumber
\end{align}
for some $\theta > 0$, we want to prove that $\Vert U\theta\Vert_{2} > h\sqrt{n}$.

Set $\Vert U\theta \Vert_{2} = t $ , $U\theta/t = u$ , $\mathrm{Col}_{j}\left( A\right) = X = \left( X_{1},X_{2},\dots,X_{n}\right)$. Since $\log_{+}{s} < s^{2}$,
we have 
$$\textsf{E}\mathrm{dist}^{2}\left( tu\star \overline{X} , \mathbb{Z}^{n}\right) < \alpha^{2}t^{2}.$$

Let $q \in \mathbb{Z}^{n}$ denote a closest integer vector to $tu\star \overline{X}$, thus
$$\textsf{E}\left| u\star \overline{X}-q/t\right|^{2} < \alpha^{2}.$$

By the Markov's inequality,
\begin{align}
\left| u\star \overline{X}-q/t \right|^{2} < c^{2}\alpha^{2} \nonumber 
\end{align}
with at least $1-\frac{1}{c^{2}}$ probability. Another application of Markov's inequality shows that 
$$\left| u_{i}\overline{X}_{i}-q_{i}/t \right| < \frac{c\alpha}{c_{1}\sqrt{n}}~~\text{for}~~\text{any}~~\ i \in J_{1},$$
where $J_{1}$ is some subset of $[n]$ and $|J_{1}| \ge n-c_{1}n$.

Furthermore, since $\textsf{E}|\overline{X}|^{2} = 2\mathrm{Var}|X| \le 2Kn$,
a similar application of Markov's inequality shows that, with at least $1-\frac{1}{c^{2}}$ probability 
$$|\overline{X}_{i}| \le \frac{c}{c_{2}}\sqrt{2K} ~~\text{for}~~\text{any}~~i \in J_{2},$$
where $J_{2}$ is some subset of $[n]$ and $|J_{2}| \ge n-c_{2}n$.
\par 
Moreover, incompressible vectors are spread which be founded in 
Lemma 3.4 of  Rudelson and Vershynin \cite{Rudelsonadvance}. Thus, 
there exists a set 
$$J_{3} : = \left\{ i : \frac{\rho}{\sqrt{2n}} \le |u_{i}| \le \frac{1}{\sqrt{\delta n}} \right\},$$
satisfies $ |J_{3}| \ge \frac{1}{2}\rho^{2}\delta n$.

Finally, Lemma \ref{lm2.4} shows that $\textsf{P}\left\{ |\overline{X}_{i}| \ge 1 \right\} \ge p$.  It means that there exists $J_{4} \subset J_{3}$,  which satisfies $|J_{4}| \ge \frac{1}{2}|J_{3}|$ with high probability (set it is $p_{b}$, which depending only on $p$)
$$|\overline{X}_{i}| \ge 1 ~~\text{for}~~\text{any}~~ i \in J_{4}.$$

We can choose the constant $c,c_{1},c_{2}$ which depending only on $\delta,\rho,p$.
$$2(1-\frac{1}{c^{2}})+p_{b} > 2,$$
$$n-c_{1}n+n-c_{2}n+\frac{1}{4}\rho^{2}\delta n > 2n .$$
Then, there exists a coordinate $i$ for which we have simultaneously the following three bounds:
$$|u_{i}\overline{X}_{i}-q_{i}/t| \le \frac{c\alpha}{c_{1}\sqrt{n}},\quad 1 \le |\overline{X}_{i}| \le \frac{c}{c_{2}}\sqrt{2K},\quad \frac{\rho}{\sqrt{2n}} \le |u_{i}| \le \frac{1}{\sqrt{\delta n}}.$$

Furthermore, using the triangle inequality, we get 
$$|\frac{q_{i}}{t}| \ge |u_{i}\overline{X}_{i}|-\frac{c\alpha}{c_{1}\sqrt{n}} \ge \frac{\rho}{\sqrt{2n}}-\frac{c\alpha}{c_{1}\sqrt{n}} > 0 .$$
where $0 < \alpha < \alpha_{0} = \frac{c_{1}\rho}{c\sqrt{2}}$.
\par 
Thus $q_{i} \ne 0$, $q_{i} \in \mathbb{Z}$, we necessarily have $|q_{i}|\ge 1$, furthermore
$$|\frac{q_{i}}{t}| \le |u_{i}\overline{X}_{i}| + \frac{1}{\sqrt{\delta n}} < \frac{c\alpha}{c_{1}\sqrt{n}} \cdot \frac{c}{c_{2}}\sqrt{2K} + \frac{1}{\sqrt{\delta n}} < \frac{c}{c_{2}}\frac{\sqrt{K}\rho}{\sqrt{n}} + \frac{1}{\delta n},$$
where $h=\frac{c}{c_{2}}\sqrt{K}\rho + \frac{1}{\sqrt{\delta}} > 0.$
Then $|t| \ge |q_{i}|h\sqrt{n} \ge h\sqrt{n}.$

\end{proof}
\subsection{The number of integer points inside a ball}
We will need to estimate the number of integer points in a ball in $\mathbb{R}^{n}$. The set $B\left(0,R\right)$ is the ball of radius R centered at 0.

\begin{mylem} \label{lemm2.11}
For any $R>0$,
$$\left| \mathbb{Z}^{n} \cap B\left(0,R\right)\right| \le \left(2+\frac{C_{\ref{lemm2.11}}R}{\sqrt{n}}\right)^{n}.$$
where $C_{\ref{lemm2.11}}>0$ is an absolute constant.
\end{mylem}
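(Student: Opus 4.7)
The plan is to apply the exponential-moment (Chernoff-type) trick. For any $t>0$ and any $x\in\mathbb{R}^n$, we have $\mathbf{1}_{\{\Vert x\Vert_2\le R\}}\le e^{t(R^2-\Vert x\Vert_2^2)}$. Summing this inequality over $x\in\mathbb{Z}^n$ and factoring the exponential coordinate-by-coordinate yields
\begin{equation*}
\left|\mathbb{Z}^n\cap B(0,R)\right|\le e^{tR^2}\left(\sum_{k\in\mathbb{Z}}e^{-tk^2}\right)^n.
\end{equation*}
I would then bound the one-dimensional theta sum by an integral comparison: since $k\mapsto e^{-tk^2}$ is decreasing on $[0,\infty)$,
\begin{equation*}
\sum_{k\in\mathbb{Z}}e^{-tk^2}=1+2\sum_{k=1}^{\infty}e^{-tk^2}\le 1+2\int_0^{\infty}e^{-tx^2}\,dx=1+\sqrt{\pi/t}.
\end{equation*}

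The last step is to optimize $t$ so that the bound takes the stated form. Choosing $t=(n\log 2)/R^2$ makes $e^{tR^2}=2^n$ and $\sqrt{\pi/t}=R\sqrt{\pi/(n\log 2)}$, so the product collapses to $\left(2+CR/\sqrt{n}\right)^n$ with $C=2\sqrt{\pi/\log 2}$, which is the claimed inequality (here $\log$ denotes the natural logarithm).

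I do not anticipate a serious obstacle; the argument is essentially routine once the Chernoff framework is in place. The only mild subtlety is pinning the additive constant to exactly $2$. A purely geometric proof --- covering each integer point $p\in B(0,R)$ by the disjoint unit cube $p+[-1/2,1/2]^n\subset B(0,R+\sqrt{n}/2)$ and using Stirling's estimate $\omega_n^{1/n}\sqrt{n}\to\sqrt{2\pi e}$ for the volume of the Euclidean ball --- yields a bound of the shape $\left(c_1+c_2 R/\sqrt{n}\right)^n$ with $c_1=\sqrt{2\pi e}/2\approx 2.07>2$, falling slightly short of the stated form. The Chernoff approach sidesteps this cleanly by letting the free parameter $t$ absorb the extra slack into exactly $2$.
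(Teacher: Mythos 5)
The paper states Lemma 2.6 without proof (it is treated as a known counting estimate, implicitly borrowed from the random-matrix literature, e.g.\ Rudelson 2024), so there is no in-paper argument to compare against. Your Chernoff-style proof is correct and complete: the pointwise bound $\mathbf{1}_{\{\Vert x\Vert_2\le R\}}\le e^{t(R^2-\Vert x\Vert_2^2)}$ is valid for every $t>0$, the sum over $\mathbb{Z}^n$ factorizes coordinate-wise into the theta sum, the integral comparison $\sum_{k\in\mathbb{Z}}e^{-tk^2}\le 1+\sqrt{\pi/t}$ is right, and plugging in $t=(n\log 2)/R^2$ indeed gives
\begin{equation*}
\bigl|\mathbb{Z}^n\cap B(0,R)\bigr|\le 2^n\Bigl(1+\tfrac{R}{\sqrt{n}}\sqrt{\pi/\log 2}\Bigr)^n=\Bigl(2+\tfrac{CR}{\sqrt{n}}\Bigr)^n,\qquad C=2\sqrt{\pi/\log 2}.
\end{equation*}
Your side remark about the volumetric alternative is also fair: for large $n$ the covering-by-unit-cubes bound produces an additive constant $\omega_n^{1/n}\sqrt{n}/2\to\sqrt{2\pi e}/2\approx 2.07$, so it does not hit exactly $2$ in the limit, whereas the free parameter $t$ in the exponential-moment argument lets you lock the additive constant to $2$ at the cost of a larger (but still absolute) multiplicative constant $C$. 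Either route is acceptable for the lemma as stated, since $C$ is unspecified, but your choice is the cleaner one.
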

\subsection{Almost orthogonal systems of vectors}
 In our paper, we need to control the arithmetic structure of the kernel of some random matrix $\mathrm{B}$; This structure is essentially derived from the Randomized least common denominator(RLCD). This forced us to look for a way to divide the subspace. At the same time, we want to find a suitable representative vector in the subspace to estimate the probability that such vectors are in the kernel. We will employ the result of Section 3.1 in Rudelson \cite{Rudelsonrank}, which solves the problem above. 
\begin{mydef}
Let $\nu \in \left( 0,1 \right)$. An l-tuple of vectors $\left( v_{1},v_{2},\dots,v_{l} \right) \subset \mathbb{R}^{n} \setminus \left\{ 0 \right\}$ is called $\nu$-almost orthogonal if the $n \times l$ matirx $V_{0}$ with $\mathrm{Col}_{j}\left( V_{0} \right)=\frac{v_{j}}{\Vert v_{j} \Vert_{2}} $ satisfies:
$$1-\nu \le s_{l}\left( V_{0} \right) \le s_{1}\left( V_{0} \right) \le 1+\nu.$$
\end{mydef}

The following lemma shows how to divide the linear subspace $E$ into the $E\cap W$(where W is a closed set and $W \subset \mathbb{R}^{n} \setminus \left\{0 \right\}$) and the linear subspace $F \subset E$ with high dimension.
This lemma is critical for estimating the arithmetic structure, which is from Lemma 3.3 in Rudelson \cite{Rudelsonrank}.
\begin{mylem} \label{lm2.13}
[Rudelson \cite{Rudelsonrank}]Let $W \subset \mathbb{R}^{n} \setminus \left\{ 0 \right\}$ be the closed set. Let $l<k \le n$, and let $E \subset \mathrm{R}^{n}$ be a linear subspace of dimension $k$. Then, at least one of the following holds:
\begin{enumerate}
    \item There exist vectors $v_{1},\dots,v_{l} \in E\cap W$such that
\begin{itemize}
    \item The l-tuple $\left(v_{1},\dots,v_{l}\right)$is $\left(\frac{1}{8}\right)$-almost orthogonal;
    \item For any $\theta \in \mathbb{R}^{l}$ such that $\Vert \theta \Vert_{2} \le \frac{1}{20\sqrt{l}}$,
$$\sum_{i=1}^{l}\theta_{i}v_{i}\notin W;$$
\end{itemize}
\item There exists a subspace $F \subset E$ of dimension $k-l$ such that $F \cap W=\varnothing$.
\end{enumerate}

\end{mylem}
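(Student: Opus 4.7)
The plan is a greedy construction combined with a dichotomy that sends us to case 2 as soon as the construction cannot be continued. Initialize $E_{0}:=E$ and for $i=1,\dots,l$ iterate as follows: if $E_{i-1}\cap W=\varnothing$, then since $\dim E_{i-1}=k-i+1\ge k-l+1$, any $(k-l)$-dimensional subspace $F\subset E_{i-1}$ has $F\cap W=\varnothing$ and case 2 holds; otherwise, using the closedness of $W$, select $v_{i}\in E_{i-1}\cap W$ of smallest Euclidean norm and put $E_{i}:=E_{i-1}\cap v_{i}^{\perp}$, which has dimension $k-i$. By construction $v_{j}\in E_{j-1}\subset v_{i}^{\perp}$ for every $j>i$, so the vectors $v_{1},\dots,v_{l}$ are automatically pairwise orthogonal, and because $E_{i}\cap W\subset E_{i-1}\cap W$ the norms satisfy $\|v_{1}\|_{2}\le\cdots\le\|v_{l}\|_{2}$.

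If the iteration reaches step $l$, the matrix $V_{0}$ with columns $v_{i}/\|v_{i}\|_{2}$ has orthonormal columns, so $s_{1}(V_{0})=s_{l}(V_{0})=1\in[1-\tfrac{1}{8},1+\tfrac{1}{8}]$, giving condition (a). Condition (b) I would attack by contradiction: if $v=\sum_{i=1}^{l}\theta_{i}v_{i}\in W$ with $\|\theta\|_{2}\le 1/(20\sqrt{l})$, then $v\in E\cap W$, so minimality of $\|v_{1}\|_{2}$ forces $\|v\|_{2}\ge\|v_{1}\|_{2}$, while orthogonality together with the monotonicity of $\|v_{i}\|_{2}$ gives
\begin{equation*}
\|v\|_{2}^{2}=\sum_{i=1}^{l}\theta_{i}^{2}\|v_{i}\|_{2}^{2}\le\|v_{l}\|_{2}^{2}\|\theta\|_{2}^{2}\le\frac{\|v_{l}\|_{2}^{2}}{400\,l},
\end{equation*}
so $\|v\|_{2}\le\|v_{l}\|_{2}/(20\sqrt{l})$.

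The main obstacle is that these two estimates conflict only when $\|v_{l}\|_{2}<20\sqrt{l}\,\|v_{1}\|_{2}$, and plain minimum-norm greediness does not a priori control the ratio $\|v_{l}\|_{2}/\|v_{1}\|_{2}$. I would therefore refine the iteration by restricting the search at step $i$ to $E_{i-1}\cap W\cap B(0,c\sqrt{l}\,\|v_{1}\|_{2})$ for an absolute constant $c<20$; if at some step this restricted set is empty, then $E_{i-1}\cap W$ lies entirely outside a fixed ball, and one leverages this scale gap together with the closedness of $W$ to extract a $(k-l)$-dimensional subspace $F\subset E_{i-1}$ with $F\cap W=\varnothing$, concluding in case 2. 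Verifying that the scale gap really produces such an $F$---presumably via a compactness/covering-number argument on the unit sphere of $E_{i-1}$ or an iterative slicing of $E_{i-1}$ until the remaining directions are free of $W$---is the technical heart of the proof, and the place where the explicit constants $\tfrac{1}{8}$ and $\tfrac{1}{20\sqrt{l}}$ would be pinned down.
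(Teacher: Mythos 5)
Your greedy construction is a reasonable opening move, but the ``scale-gap $\Rightarrow$ case~2'' step is where the argument breaks, and it cannot be repaired in the form you propose. You claim that if the restricted set $E_{i-1}\cap W\cap B(0,c\sqrt{l}\,\|v_1\|_2)$ is empty, then one can extract a $(k-l)$-dimensional subspace $F\subset E_{i-1}$ with $F\cap W=\varnothing$. This fails. Take $E=\mathbb{R}^n$ and
\begin{equation*}
W \;=\; \{e_1\}\;\cup\;\{x\in\mathbb{R}^n : \|x\|_2 = M\}, \qquad M>100\sqrt{l}.
\end{equation*}
Here $W$ is closed and $0\notin W$. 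The greedy picks $v_1=e_1$ (norm $1$), and then every element of $E_1\cap W$ has norm $M>c\sqrt{l}$, so your restricted set is already empty at step~$2$. Yet no positive-dimensional subspace of $E_1$ avoids the sphere $\{\|x\|_2=M\}\subset W$, so case~2 is impossible for this $W$. The lemma is nonetheless true: case~1 holds with $v_1,\dots,v_l$ all chosen of norm $M$ on the sphere (orthonormal after rescaling, orthogonal to $e_1$), but the min-norm greedy never finds this tuple because it anchors everything to the smallest-scale point $e_1$. The underlying issue is that ``$E_{i-1}\cap W$ has no small element'' carries no information about whether $E_{i-1}$ contains an entire subspace disjoint from $W$; a correct proof must compare the candidate tuple against $W$ at \emph{its own} scale rather than at the scale $\|v_1\|_2$ of the global minimizer, which is a genuinely different (multi-scale) argument.

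A secondary issue is at the very first step: $W$ is only required to be closed in $\mathbb{R}^n\setminus\{0\}$, which allows $W$ to accumulate at the origin (e.g.\ $W=\{x:0<\|x\|_2\le1\}$). Then $\inf\{\|w\|_2:w\in E\cap W\}$ is $0$ and not attained, so ``select $v_1$ of smallest norm'' already fails. In this paper's applications $W$ is bounded away from $0$ so this is not fatal, but if you intend the lemma in the generality stated you must either strengthen the hypothesis to ``closed in $\mathbb{R}^n$'' or work with near-minimizers.
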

\begin{myrem}
The assumption $k<c\sqrt{n}$ in Theorem \ref{Theo_main1} is based on condition $\Vert \theta\Vert_{2} \le \frac{1}{20\sqrt{l}}$ in Lemma \ref{lm2.13} (1) and Proposition \ref{Propo_1}(see more details in Section 3).
\end{myrem}
\subsection{Restricted invertibility phenomenon}
In this subsection, we will introduce the method for estimating singular values of deterministic matrices.  The following lemma is from Theorem 6 in Naor and Youssef \cite{Naor_JTDM}.
\begin{mylem} \label{lm2.8}
    [Naor and Youssef \cite{Naor_JTDM}] Assume that M is a full-rank matrix of size $k \times d$ with $k \le d$. Then for $1 \le l \le k-1$, there exists $l$ different indices $i_{1},\dots,i_{l}$ such that the matrix $M_{i_{1},\dots,i_{l}}$ with columns $\mathrm{Col}_{i_{1}}\left( M \right),\dots,\mathrm{Col}_{i_{l}}\left( M \right)$ has the smallest non-zero singular value $s_{l}\left( M_{i_{1},\dots,i_{l}} \right)$ satisfying
    \begin{align}
        s_{l}\left(M_{i_{1},\dots,i_{l}} \right)^{-1} \le C_{\ref{lm2.8}}\min_{r \in \left\{ l+1, \dots , k \right\}}{\sqrt{\frac{dr}{\left( r-l \right) \sum_{i=r}^{k}{s_{i}\left( M \right)^{2}}}}}.\nonumber
    \end{align}
    where $C_{\ref{lm2.8}}$ is an absolute constant.
\end{mylem}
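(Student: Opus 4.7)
The plan is to derive this estimate from the Naor--Youssef refinement of the Bourgain--Tzafriri restricted invertibility theorem, proved via the Spielman--Srivastava barrier (potential) method or equivalently via the Marcus--Spielman--Srivastava method of interlacing polynomials. The overall strategy is to apply the sharp restricted-invertibility principle to a spectral truncation of $M$ and then lift the result back.

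First, perform the spectral truncation. Write $M = \sum_{i=1}^{k} s_{i}(M)\, u_{i} v_{i}^{T}$, and for each $r \in \{l+1,\dots,k\}$ let $P_{r}$ denote the orthogonal projection in $\mathbb{R}^{k}$ onto $\mathrm{span}(u_{r},\dots,u_{k})$. The truncated matrix $M^{(r)} := P_{r} M$ has rank $k-r+1$, its nonzero singular values are $s_{r}(M),\dots,s_{k}(M)$, and $\Vert M^{(r)} \Vert_{\mathrm{HS}}^{2} = \sum_{i=r}^{k} s_{i}(M)^{2}$. Because $P_{r}$ is an orthogonal projection, for every $l$-subset $I$ of column indices one has the Gram-matrix inequality $(M^{(r)}_{I})^{T} M^{(r)}_{I} = M_{I}^{T} P_{r} M_{I} \preceq M_{I}^{T} M_{I}$, hence $s_{l}(M_{I}) \ge s_{l}(M^{(r)}_{I})$. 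It therefore suffices to exhibit, for each $r$, indices $i_{1},\dots,i_{l}$ with
\begin{equation*}
s_{l}\bigl( M^{(r)}_{i_{1},\dots,i_{l}} \bigr)^{2} \;\ge\; \frac{1}{C^{2}}\cdot \frac{(r-l)\sum_{i=r}^{k} s_{i}(M)^{2}}{r\,d},
\end{equation*}
and then to take the minimum over $r \in \{l+1,\dots,k\}$.

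Next, extract $I$ by applying restricted invertibility to the $d$ columns of $M^{(r)}$. The sharp form of the theorem, set up with a two-sided barrier potential $\phi_{u}(X)=\mathrm{tr}((uI-X)^{-1})$ tuned to track the $l$-th singular value under greedy rank-one updates (or alternatively, an interlacing family of expected characteristic polynomials of randomly chosen rank-one updates), produces column indices $i_{1},\dots,i_{l}$ satisfying a lower bound of the form $c \,(r-l)/r \cdot \Vert M^{(r)} \Vert_{\mathrm{HS}}^{2}/d$. Substituting $\Vert M^{(r)} \Vert_{\mathrm{HS}}^{2} = \sum_{i=r}^{k} s_{i}(M)^{2}$ yields exactly the required bound; minimising over $r$ then delivers the lemma.

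The main obstacle is the restricted-invertibility step itself, and specifically getting the correct $r/(r-l)$ dependence. Classical Bourgain--Tzafriri only produces an estimate in terms of the operator-to-Frobenius ratio $\Vert M^{(r)}\Vert / \Vert M^{(r)}\Vert_{\mathrm{HS}}$ and suffers logarithmic losses. The naïve direct application of the Spielman--Srivastava theorem to $M^{(r)}$ yields the larger denominator $(k-r+1)/(k-r+1-l)$, reflecting the rank of $M^{(r)}$ rather than the parameter $r$. Getting the sharp $r/(r-l)$ form requires tuning the barrier parameter against $r$ itself (so that the potential weighs only the top $r$ eigenvalues of the running Gram matrix) and tracking the upper and lower barriers simultaneously; equivalently, one applies the method of interlacing families with a characteristic-polynomial potential calibrated to the threshold $r$. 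This delicate bookkeeping is the principal technical content of the Naor--Youssef refinement and is where I would expect to concentrate the bulk of the effort.
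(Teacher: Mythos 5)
This lemma is cited from Naor--Youssef (Theorem~6 of \cite{Naor_JTDM}) and is not proved in the paper, so I assess your sketch on its own terms. The Gram-monotonicity step $s_l(M_I) \ge s_l(M^{(r)}_I)$ is correct, and choosing $I$ to depend on the minimizing $r$ is logically fine. But the truncation scaffolding has a structural defect that no barrier calibration can repair. The matrix $M^{(r)} = P_r M$ has rank $k-r+1$, and any barrier or interlacing-polynomial argument applied to $M^{(r)}$ sees only its intrinsic data (its singular values $s_r,\dots,s_k$ and its rank $k-r+1$); it cannot ``know'' the external parameter $r$, which merely records how many top singular directions of the original $M$ were discarded. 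So ``calibrating the potential to the threshold $r$'' is not a well-defined operation on $M^{(r)}$, and quantitatively the MSS-type bound $s_l(M^{(r)}_I)^2 \gtrsim (\sqrt{m}-\sqrt{l})^2 \,\|M^{(r)}\|_{\mathrm{HS}}^2/(md)$ with $m = k-r+1$ can be far weaker than the target $(r-l)\,\|M^{(r)}\|_{\mathrm{HS}}^2/(rd)$ when $m$ is close to $l$ while $k$ (hence $r$) is large.

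More fatally, for $r \ge k-l+2$ the truncation $M^{(r)}$ has rank $< l$, so $s_l(M^{(r)}_I) = 0$ for every $l$-subset $I$ and Gram monotonicity yields nothing; and when $l > k/2$ this rank deficiency occurs for \emph{every} admissible $r \in \{l+1,\dots,k\}$, so the scheme produces no information at all, even though the lemma is asserted for all $1 \le l \le k-1$. To obtain the $\min_{r}$ form one needs a restricted-invertibility estimate in which the threshold $r$ is a free parameter of the potential/interlacing analysis carried out on the full Gram matrix $MM^{T}$ (equivalently, a Schatten-exponent parameter), not the rank of an auxiliary rank-deficient truncation. Your acknowledgment that the restricted-invertibility step is where the work lies is appropriate, but the truncation you set up is not the right scaffolding for that work.
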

\subsection{Estimating of compressible case}
This section aims to prove that the kernel of $B$ is unlikely to contain an extensive, almost orthogonal system of compressible vectors. The following lemma is from Proposition 4.2 in Rudelson \cite{Rudelsonrank}. 
\begin{mylem} \label{lm2.9}
[ Rudelson \cite{Rudelsonrank}]Let $k,n \in \mathbb{N}$ be such that $k<n/2$ and let $B$ be an $n-k \times n$ matrix whose entries are independent random variables satisfying \eqref{Eq_condition}. There exists $\tau_{\ref{lm2.9}}> 0$ such that the probability that there exists a $\left(\frac{1}{4}\right)$-almost orthogonal l-tuple $x_{1},x_{2},\dots,x_{l} \in \mathrm{Comp}\left( \tau^{2},\tau^{4}\right)$ with $l \le \tau^{3}n$ and 
$$\Vert Bx_{j} \Vert_{2} \le \tau \sqrt{n} ~~\text{for}~~\text{all}~~ j \in [l]$$
is less than $\exp{\left(-c_{\ref{lm2.9}}ln \right)}$, $c_{\ref{lm2.9}}$ is a constant.
\end{mylem}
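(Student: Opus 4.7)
The strategy is a standard net argument for compressible vectors, combined with a tuple-level anti-concentration estimate that exploits the almost orthogonality in order to obtain the factor $l$ in the exponent.

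First, I would reduce the compressible case to the sparse case. By Definition 2.2, each $x_j \in \mathrm{Comp}(\tau^2,\tau^4)$ satisfies $\|x_j - y_j\|_2 \le \tau^4$ for some vector $y_j$ with $|\mathrm{supp}(y_j)| \le \tau^2 n$. On the event $\{\|B\| \le K_0 \sqrt{n}\}$, which by Lemma 2.2 (after truncation of the entries) has probability at least $1 - e^{-cn}$, the condition $\|Bx_j\|_2 \le \tau\sqrt{n}$ passes to $\|By_j\|_2 \le 2\tau\sqrt{n}$, and $\|y_j\|_2 \in [1-\tau^4,1+\tau^4]$. Since $\|X - Y\|_{HS} \le \tau^4 \sqrt{l}$ is negligible for small $\tau$, the normalized $\tilde y_j = y_j/\|y_j\|_2$ inherit $(1/2)$-almost orthogonality from the $(1/4)$-almost orthogonality of $(x_j)$, so that $s_l(\tilde Y) \ge 1/2$ for the matrix $\tilde Y$ whose columns are the $\tilde y_j$.

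Second, I would discretize. Summing over all choices of supports $(I_1,\dots,I_l)$ with $|I_j| \le \tau^2 n$ gives at most $\binom{n}{\le \tau^2 n}^l \le (e/\tau^2)^{l \tau^2 n}$ possibilities. For each such tuple of supports, I build a standard $\eta$-net on each sparse unit ball of size at most $(C/\eta)^{\tau^2 n}$, and tune $\eta$ on the scale of $\tau$ so that a net point $\hat y_j$ satisfies $\|B \hat y_j\|_2 \le 3\tau\sqrt{n}$ on the good event, while still preserving $s_l(\hat Y) \ge 1/4$. The total number of candidate tuples is at most $(C'/\tau^3)^{l \tau^2 n}$.

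Third, for each fixed net tuple $\hat Y$, I would estimate the probability of the bad event by writing $\hat Y = Q \Sigma R$ via the SVD. Then
\[
\|B \hat Y\|_{HS}^2 \;\ge\; s_l(\hat Y)^2 \|B Q\|_{HS}^2 \;\ge\; \tfrac{1}{16} \sum_{i=1}^{n-k} \|P_E \mathrm{Row}_i(B)\|_2^2,
\]
where $E$ is the $l$-dimensional range of $\hat Y$. The sum on the right is a quadratic form in the independent subgaussian entries of $B$ with expected value $l(n-k) \ge ln/2$, so the Hanson--Wright inequality yields $\mathbb{P}(\|BQ\|_{HS}^2 \le l(n-k)/2) \le e^{-c_1 l n}$. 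For $\tau$ small, the bad event forces $\|B \hat Y\|_{HS}^2 \le 9 l \tau^2 n \ll l(n-k)/32$, a contradiction. Combining with the union bound, the total probability is at most $(C'/\tau^3)^{l \tau^2 n} e^{-c_1 l n}$; choosing $\tau$ so that $\tau^2 \log(C'/\tau^3) < c_1/2$ yields the desired bound $e^{-c l n}$.

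\textbf{Main obstacle.} The delicate point is balancing the mesh size $\eta$ in the second step. To guarantee $s_l(\hat Y) \ge 1/4$ by a naive singular value perturbation, one would need $\eta \sqrt{l} \lesssim 1$, which for $l = \tau^3 n$ forces $\eta = O(n^{-1/2})$ and blows up the net size. I would circumvent this either by combining a coarse support-level net with a Gram--Schmidt type correction inside each support block (so that the almost-orthogonality is restored after an algebraic adjustment rather than by fine discretization), or alternatively by replacing the Hanson--Wright step with a small ball estimate via Proposition 2.1 applied row-by-row, exploiting the fact that for sparse $\hat y_j$ the determinant $\det(\hat Y^T \hat Y)^{1/2}$ remains bounded below by a constant independent of $n$.
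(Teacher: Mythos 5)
The paper does not actually prove this lemma; Lemma~2.9 is a bare citation of Proposition~4.2 of Rudelson (2024), and Remark~2.2 merely asserts that Rudelson's Section~4 argument makes no use of identical distribution and so goes through verbatim. Your write-up is therefore a from-scratch reconstruction, which is a harder task than what the paper itself does, and your ``Main obstacle'' paragraph correctly flags that you have not in fact reconstructed the crucial step.

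The obstacle you identify is genuine, and neither of your proposed repairs closes it. The entropy of an $\eta$-net on $l$-tuples of $\tau^2 n$-sparse unit vectors is of order $l\tau^2 n\log(C/(\eta\tau^2))$; with mesh $\eta \asymp 1/\sqrt{l}$, which is what a naive Weyl perturbation demands to keep $s_l(\hat Y)$ bounded below, this carries a $\tau^2\log l$ factor that, for $l$ as large as $\tau^3 n$, grows like $\tau^2\log n$ and cannot be beaten by the per-net-point rate $e^{-c_1 l n}$ for any fixed $\tau$. A Gram--Schmidt correction inside each support block does not help: orthogonalizing $\hat y_j$ against $\hat y_1,\dots,\hat y_{j-1}$ spreads the support beyond the original block and moves the point off $\delta\mathbb{Z}^n$, so you would have to re-round and face the same perturbation issue again. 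The second repair rests on a false claim: $\det(\hat Y^T\hat Y)^{1/2} = \prod_{j=1}^{l} s_j(\hat Y)$ is not bounded below by an $n$-independent constant for sparse almost-orthogonal $\hat y_j$ --- it vanishes the moment $s_l(\hat Y)=0$, which a coarse mesh cannot exclude, and even when $s_l(\hat Y)\ge 1/4$ the lower bound is only $4^{-l}$, so the row-by-row application of Proposition~2.1 imports exactly the same requirement on $s_l(\hat Y)$ that you were trying to avoid. Closing the gap requires a structurally different device (for instance, netting the range of the tuple on the Grassmannian at constant scale, or extracting a $\log l$ improvement in the lower tail of $\|BQ_E\|_{\mathrm{HS}}^2$), and the proposal as written does not yet contain one; you should work through Rudelson's Section~4 argument directly rather than rebuild it blind.
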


\begin{myrem} The proof of Proposition 4.2 in Rudelson \cite{Rudelsonrank} is not based on the property of identically distributed variables. Thus, this lemma can be obtained directly. Furthermore, it is worth noting that the proof Proposition 4.2 in Rudelson \cite{Rudelsonrank}  requires that the second moment of the random variables have a given positive lower bound. This is also why the variances of the matrix entries of our setting are assumed to be 1. Naturally, we can also change the variance of the matrix elements from 1 to a variance with a positive lower bound.

\end{myrem}

\section{Incompressible vectors}

Before introducing the main conclusion of this section, we consider the linear subspace spanned by incompressible vectors in $\ker(B)$. We aim to show that vectors with large RLCD are dominant within this linear subspace. Specifically, we hope that the probability of vectors having small RLCD in this subspace is superexponentially small. As seen in the discussion in Section 4 (where the lemma is the primary tool), we need to consider the following event to obtain the above conclusion. An $l$-tuple almost orthogonal system exists in the linear subspace, where all $l$ vectors have small RLCD. The main conclusion of this section states that the probability of this event occurring is superexponentially small.\par 
We now introduce several important constants before presenting our main results.
\begin{align}
L=c\sqrt{k}, \quad 
\alpha = \min{\left\{ \alpha_{0}\left( \tau^{2},\tau^{4},K \right),\alpha_{0}\left( \tau^{2},\tau^{4}/2,K \right) \right\}},\nonumber
\end{align}
and
\begin{align}
r = \min{\left\{ h\left( \tau^{2},\tau^{4},K\right)/2,h\left( \tau^{2},\tau^{4}/2,K \right)/8 \right\}}.\nonumber
\end{align}
where $c$ is absolute constants from Proposition \ref{pro2.6}, $k$ appears in Theorem \ref{Theo_main1}, $p$ and $K$ is a parameter from (\ref{Eq_condition}), and $\tau$ was chosen by Lemma \ref{lm2.9}.
\par 
For ease of writing, we set 
$$\mathrm{d}_{A}\left( x,\mathbb{Z}^{n} \right) = \min_{1 \le i \le n}\sqrt{\textsf{E}\mathrm{dist}^{2}\left( x\star \overline{\mathrm{Col}_{i}\left( A \right)},\mathbb{Z}^{n} \right)}$$
\par 
The following is the main result of this section
\begin{mypropo}\label{Propo_1}
Let $\rho \in \left( 0,\rho_{0} \right)$, where $\rho_{0}=\rho_{0}\left( \tau, K \right)$ is some positive number. Assume that $l \le k \le \frac{\rho}{2}\sqrt{n}$.
Let $B$ be an $(n-k) \times n$ matrix with independent entries satisfying \eqref{Eq_condition}. Consider the event $\mathcal{E}_{\ref{Propo_1}}$ that exist vectors $v_{1},\dots,v_{l} \in \ker\left(B\right)$ having the following properties:
\begin{enumerate}
    \item $2r\sqrt{n} \le \Vert v_{j} \Vert_{2} \le R: = \exp{\left( \frac{\rho^{2} n}{4 L^{2}} \right)}$ for all $j \in [l]$;
    \item $\mathrm{span}\left( v_{1},\dots,v_{l}\right) \cap S^{n-1} \subset \mathrm{Incomp}\left( \tau^{2},\tau^{4}\right)$; 
    \item The vectors $v_{1},\dots,v_{l}$ are $\left( \frac{1}{8}\right)$-almost orthogonal system;
    \item $\mathrm{d}_{A}\left( x,\mathbb{Z}^{n} \right) \le \rho \sqrt{n}$ for all $j \in [l]$;
    \item The $n \times l$ matrix $V$ with columns $v_{1},\dots,v_{l}$ satisfies 
$$\mathrm{d}_{A}\left( v_{j},\mathbb{Z}^{n} \right) > \rho \sqrt{n}$$
for all $\theta \in \mathbb{R}^{l}$ such that $\Vert \theta \Vert_{2} \le \frac{1}{20\sqrt{l}}$ and $\Vert V\theta \Vert_{2} \ge 2r \sqrt{n}$.
\end{enumerate}

Then 
$$\textsf{P}\left( \mathcal{E}_{3.1} \right) \le \exp{\left( -ln \right)}.$$
\end{mypropo}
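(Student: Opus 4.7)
I approach Proposition 3.1 via an $\varepsilon$-net argument on matrices $V$ satisfying (1)--(5), combined with the RLCD small-ball bound of Proposition 2.1 applied to $V^\top$. The event $\mathcal E_{3.1}$ forces $BV=0$, hence $V^\top B_i=0$ for every independent row $B_i$ of $B$; I aim to build a discrete family $\mathcal N$ of candidate matrices $V$, bound $|\mathcal N|$, estimate $\textsf{P}(\|V^\top B_i\|_2\le \rho\sqrt n)$ per row, and union-bound over $\mathcal N$ and the $n-k$ independent rows.

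\textbf{Discretization.} I would dyadically partition the column norms into windows $[D_j,2D_j]$ with $2r\sqrt n\le D_j\le R=\exp(\rho^2 n/4L^2)$; the number of scale vectors is $(\log_2(R/r\sqrt n))^l=e^{o(ln)}$, absorbable into constants. For fixed scales, condition (4) provides indices $i_j\in[n]$ with $\textsf{E}\,\mathrm{dist}^2(v_j\star\overline{\mathrm{Col}_{i_j}(A)},\mathbb Z^n)\le\rho^2 n$. A random-rounding step (choose a favourable realization of $\overline{\mathrm{Col}_{i_j}(A)}$ via Markov, paying only $O(1)$ probability cost) produces an integer vector $p_j\in\mathbb Z^n$ within $O(\rho\sqrt n)$ of $v_j\star\overline{\mathrm{Col}_{i_j}(A)}$; componentwise division by the fixed realization then determines $v_j$ up to $O(\rho\sqrt n)$. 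Lemma 2.6, combined with the incompressibility constraint (2) and Lemma 2.3 to control $\|\overline{\mathrm{Col}_{i_j}(A)}\|_2$, bounds the number of admissible $p_j$ per column; the almost-orthogonality from (3) guarantees $s_l(V)\gtrsim r\sqrt n$ and $\det(V^\top V)^{1/2}\asymp\prod_jD_j$.

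\textbf{Small-ball via RLCD.} For each $V$ in the net, condition (5) combined with the cutoff $R=\exp(\rho^2 n/4L^2)$ yields, for every $\theta$ with $\|V\theta\|_2\in[2r\sqrt n,R]$,
\[
\textsf{E}\,\mathrm{dist}^2(V\theta\star\overline{B_i},\mathbb Z^n)>\rho^2 n\ge L^2\log_+\frac{\alpha\|V\theta\|_2}{L},
\]
while the complementary range $\|V\theta\|_2<2r\sqrt n$ is ruled out by $s_l(V)\gtrsim r\sqrt n$ (which forces any such $\theta$ to be smaller than any meaningful infimum candidate). Therefore $RD^{B_i}_{L,\alpha}(\mathrm{Im}(V))\gtrsim r\sqrt n$, and Proposition 2.1 applied to $V^\top$ gives, per row,
\[
\mathcal L(V^\top B_i,\rho\sqrt n)\lesssim \frac{(C\rho\sqrt{kn}/l)^l}{\prod_jD_j}.
\]

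\textbf{Union bound and main obstacle.} Multiplying over the $n-k$ independent rows and over $|\mathcal N|$, the $\prod_jD_j^{n}$ from Lemma 2.6 is almost cancelled by the $\prod_jD_j^{-(n-k)}$ arising from the $(n-k)$ applications of the determinant bound, leaving a residue $\prod_jD_j^k\le R^{lk}=\exp(\rho^2 ln/(4c^2))$ (using $L^2=c^2k$). What remains is a power of $(C\rho)^{l(n-k)}$ which, for $\rho\le\rho_0$ sufficiently small, yields the claimed bound $\exp(-cln)$. The main technical obstacle is precisely this delicate balancing: the net's log-cardinality scales like $R^{ln}$, and only the calibration $R=\exp(\rho^2 n/4L^2)$ together with the constraint $k\le c\sqrt n$ (which keeps $L=c\sqrt k\ll\sqrt n$) permits the small-ball determinant factor to achieve the cancellation. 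The thresholds $2r\sqrt n$ in (1),(5) and $\rho\sqrt n$ in (4),(5) are calibrated exactly so that this accounting closes, and managing the interplay between conditions (4) (which drives the net construction) and (5) (which drives the small-ball bound) under these tight quantitative thresholds is the crux of the argument.
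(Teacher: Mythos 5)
Your high-level architecture --- dyadic slicing of the column norms, a discrete net of candidate matrices, an RLCD small-ball bound per independent row of $B$, and a union bound --- is the same as the paper's, and the broad balancing picture (the $\prod_j D_j^{\,n}$ from lattice counting against $\prod_j D_j^{-(n-k)}$ from the determinant factor in the small-ball bound, leaving $R^{lk}$ to be absorbed) is essentially correct. However, the construction of the net has a genuine gap. You propose to fix a favourable realization of $\overline{\mathrm{Col}_{i_j}(A)}$, round $v_j \star \overline{\mathrm{Col}_{i_j}(A)}$ to the nearest integer vector $p_j$, and recover $v_j$ by componentwise division. This fails on two counts. First, the entries of $\overline{\mathrm{Col}_{i_j}(A)}$ can be arbitrarily close to zero, so division gives unbounded error on those coordinates: $p_j$ does not determine $v_j$ to any useful resolution there, and the incompressibility of $v_j$ does not help because the small coordinates of $\overline{\mathrm{Col}_{i_j}(A)}$ need not avoid the large coordinates of $v_j$. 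Second, the resulting ``net'' depends on the realization of $A$, hence is random; in the application the rows of $B$ are columns of the same matrix $A$ that enters $\mathrm{d}_A$, so a union bound over an $A$-dependent net cannot be decoupled from the small-ball estimate on the rows --- the net must be deterministic.

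The paper avoids both problems by working on the fixed lattice $(\delta\mathbb{Z}^n)^l$: Lemma 3.4 applies random rounding to carry $(v_1,\dots,v_l)$ into this lattice while approximately preserving all five defining conditions and keeping $\|Bu_j\|_2$ small, and Lemmas 3.1--3.3 then count the admissible lattice points. The decisive ingredient your proposal omits is the anti-concentration estimate of Lemmas 3.2--3.3, which shows the fraction of lattice points $W$ in the relevant ball satisfying $\textsf{E}\,\mathrm{dist}^2\left(W\star\overline{\mathrm{Col}_i(A)},\mathbb{Z}^n\right) < 4\rho^2 n$ is at most $(C\rho)^{cn}$ --- a deterministic property of $W$, since the expectation over $A$ is already taken. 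Without this $(C\rho^{c})^{ln}$ reduction, the raw lattice count $\prod_j |\delta\mathbb{Z}^n\cap B(0,4d_j)|$ is too large for the union bound to close; the $(C\rho)^{l(n-k)}$ gain you invoke is not supplied by the per-row small-ball estimate alone (the paper's per-row estimate, combined with tensorization over the $n-k$ rows, yields a factor in $\delta$, not $\rho$). You also misstate the two-case structure of the RLCD lower bound: the regime $\|V\theta\|_2 \le 8r\sqrt{n}$ is not ruled out by the almost-orthogonality (it just forces $\|\theta\|_2 = O(1)$, which is perfectly relevant for the infimum in the RLCD), and the paper handles it via the incompressibility Lemma 2.5; condition (5) of the proposition only covers the complementary regime $\|V\theta\|_2 \ge 2r\sqrt{n}$.
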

Let 
\begin{align}
    W=\left\{ (w_{1},\dots,w_{l}) : w_{i} ~~\text{satisfying}~~ (1)-(5) \right\}.\nonumber
\end{align}
Therefore, we can obtain Proposition \ref{Propo_1} through  proving the following inequality:
\begin{align}
    \textsf{P}\left( W \cap \ker{B} \right) \le \exp{(-ln)}.\nonumber
\end{align}
For proving the above inequality, consider a vector $\textsf{d}=\left( d_{1},\dots,d_{l} \right) \in [r\sqrt{n},R]^{l}$, $d_{i}=2^{s_{i}}, s_{i} \in \mathbb{Z}$ and define the $W_{d}$ be the subset of $W$ satisfying for $(v_{1},\dots,v_{l}) \in W_{d}$:
\begin{align}
    \Vert v_{j} \Vert_{2} \in [d_{j},2d_{j}] ~~\text{for}~~\text{all}~~ \ j \in [l].\nonumber
\end{align}
Now, we only need to estimate the intersection of $W_{d}$ and $\ker{B}$, then we divide the proof of Proposition \ref{Propo_1} into three parts. The first part estimates the size of the net that discretizes $W_{d}$, and the second part begins with a detailed introduction to how to discretize $W_{d}$, and finally completes the proof of proposition.

\subsection{Size of the net}
In this section, our main result is that the size of the net is ``super-exponentially'' large. First of all, we set temporarily undetermined constant $\delta$, which will be determined in Section 3.2,  it satisfies that 
\begin{align}
\delta > 0,  \quad \delta \le \rho.
\end{align}
Now, we give the main result of this section, which states that it is close to the ``integer lattice'' for the points in $(\delta \mathbb{Z}^{l})$ are a minority.
\begin{mylem}\label{Lem_1}
Let $d = \left( d_{1},\dots,d_{l} \right)$ be a vector such that $d_{j} \in [r\sqrt{n},R]$, for all $j \in [l]$. 
Let $\delta$ be as in (3.1) and $\mathcal{N}_{d} \subset \left( \delta \mathbb{Z}^{n} \right)^{l}$ be the set of all $l$-tuples of vectors $u_{1},\dots,u_{l}$ such that 
$$\Vert u_{j} \Vert_{2} \in [\frac{1}{2}d_{j},4d_{j}] ~~\text{for}~~\text{all}~~ \ j \in [l],$$
$$\mathrm{d}_{A}\left( u_{j},\mathbb{Z}^{n} \right) < 2 \rho \sqrt{n}$$
and
$$\mathrm{span}\left( u_{1},\dots,u_{l} \right) \cap S^{n-1} \subset \mathrm{Incomp}\left( \tau^{2},\tau^{4}/2 \right).$$
Then 
$$\left| \mathcal{N}_{d} \right| \le \left( \frac{C_{\ref{Lem_1}}\rho^{c_{\ref{Lem_1}}}}{r\delta} \right)^{ln}\left( \prod_{j=1}^{l}{\frac{d_{j}}{\sqrt{n}}} \right)^{n} \cdot n^{l},$$
where $C_{\ref{Lem_1}},c_{\ref{Lem_1}}> 0 $ depending only on $\tau,K$.
\end{mylem}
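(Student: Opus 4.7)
The plan is to bound $|\mathcal{N}_d|$ by dropping the joint span condition and estimating each vector $u_j$ independently, since the target bound factorizes as a product over $j=1,\ldots,l$. Set
\begin{align}
N_j := \#\bigl\{u \in \delta\mathbb{Z}^n : \tfrac{1}{2}d_j \le \|u\|_2 \le 4d_j, \ \mathrm{d}_A(u,\mathbb{Z}^n) < 2\rho\sqrt{n}\bigr\}.\nonumber
\end{align}
Then $|\mathcal{N}_d| \le \prod_{j=1}^l N_j$, so it suffices to prove $N_j \le n\cdot\bigl(C\rho^c d_j/(r\delta\sqrt{n})\bigr)^n$. The factor $n$ arises from first enumerating the index $i_j \in [n]$ attaining the minimum in the definition of $\mathrm{d}_A$; after fixing $i_j$ one only needs to count $u \in \delta\mathbb{Z}^n$ in the annulus $\tfrac{1}{2}d_j \le \|u\|_2 \le 4d_j$ satisfying the single-column constraint $\textsf{E}\mathrm{dist}^2(u\star\overline{\mathrm{Col}_{i_j}(A)},\mathbb{Z}^n) < 4\rho^2 n$.

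The naive count from Lemma 2.6 applied to $\delta\mathbb{Z}^n \cap B(0,4d_j)$ yields at most $(2 + C d_j/(\delta\sqrt{n}))^n$ points; since $d_j \ge r\sqrt{n}$ and $\delta$ is small compared to $r$, this is bounded by $(Cd_j/(r\delta\sqrt{n}))^n\cdot r^n$. This already accounts for the $(r\delta)^{-n}$ factor in the target, and the remaining task is to save an additional factor of $(\rho^c/r)^n$ using the distance constraint.

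This saving is obtained via a coordinate-wise argument combining independence of the $\overline{\xi}_k$'s with Lemma 2.4. Writing $\overline{\xi} := \overline{\mathrm{Col}_{i_j}(A)}$ and using independence,
\begin{align}
\textsf{E}\mathrm{dist}^2(u\star\overline{\xi},\mathbb{Z}^n) = \sum_{k=1}^n \textsf{E}\mathrm{dist}^2(u_k \overline{\xi}_k,\mathbb{Z}) < 4\rho^2 n.\nonumber
\end{align}
By Markov, there is a subset $J\subset [n]$ of cardinality at least $(1-\eta)n$ on which $\textsf{E}\mathrm{dist}^2(u_k\overline{\xi}_k,\mathbb{Z}) < C\rho^2$. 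Lemma 2.4 gives $\textsf{P}(|\overline{\xi}_k|\ge 1)\ge p$, so for each $k\in J$ the smallness of $\textsf{E}\mathrm{dist}^2(u_k\overline{\xi}_k,\mathbb{Z})$ forces $u_k$ to lie, with positive probability over $\overline{\xi}_k$, within $O(\rho)$ of a rational number $q_k/\overline{\xi}_k$. Intersecting with $\delta\mathbb{Z}$ confines each such $u_k$ to at most $O(\rho/\delta)$ admissible values, while the $\eta n$ remaining coordinates are constrained only by the ball of radius $4d_j$, contributing at most $(Cd_j/(\delta\sqrt{n}))^{\eta n}$. Choosing $\eta$ small and multiplying across coordinates produces the $\rho^{cn}$ saving.

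The main obstacle is making the pointwise restriction on $u_k$ uniform and independent of the specific law of $\overline{\xi}_k$: the condition $\textsf{E}\mathrm{dist}^2(u_k\overline{\xi}_k,\mathbb{Z})$ small is an averaged statement that does not immediately localize $u_k$. I would handle this by a Fubini step over the realization $\omega$ of $\overline{\xi}$: with positive probability one simultaneously has $\|u\star\overline{\xi}(\omega)-q(\omega)\|_2 \le C\rho\sqrt{n}$ for some integer $q(\omega)\in\mathbb{Z}^n$ (from Markov on the $L^2$-distance) and $|\overline{\xi}_k(\omega)|\ge 1$ for at least $pn/2$ coordinates $k$ (from Markov and Lemma 2.4); on their intersection each $u_k$ is within $O(\rho)$ of $q_k(\omega)/\overline{\xi}_k(\omega)$. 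The count of admissible $u$'s is then encoded by the pair $(q,\omega)$, and integrating out $\omega$ yields the promised bound, with the delicate point being the dependence of the integer approximant $q$ on the realization.
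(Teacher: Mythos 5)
Your high-level strategy matches the paper's: factor $|\mathcal{N}_d|\le\prod_j N_j$, pay a factor of $n$ to fix the minimizing column index, count lattice points in the ball by Lemma~2.6, and then beat the naive count by a factor $\rho^{cn}$ using a coordinate-wise anti-concentration argument conditioned on a good realization of $\overline{\xi}$. This is exactly what the paper does via Lemmas~3.2--3.3. However, there are two genuine gaps in the way you set up the saving step.

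First, you define $N_j$ by dropping the incompressibility constraint entirely, keeping only the norm window and the distance condition. The paper does not do this: its set $\Lambda_{d_j}$ retains $u/\|u\|_2\in\mathrm{Incomp}(\tau^2,\tau^4/2)$, and the proof of Lemma~3.3 uses this crucially. Incompressibility guarantees a proportional set $J_3$ of coordinates with $|u_k|\gtrsim\|u\|_2/\sqrt{n}$, which is what makes $u_k\overline{\xi}_k$ range over an interval of macroscopic length so that the lattice anti-concentration bound $\textsf{P}(\mathrm{dist}(u_k\overline{\xi}_k,\mathbb{Z})<\epsilon)\lesssim\epsilon/(\tau^4 r)$ has any content. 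It is also what lets one bound the number of admissible integer vectors $q$ (via $\sum_k u_k^4\lesssim\|u\|_2^4/n$), without which the enumeration of $q$ reintroduces a factor that kills the bound. Dropping incompressibility produces a strictly larger set $N_j$ for which the target inequality is no longer supported by the argument.

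Second, your count ``each such $u_k$ is confined to $O(\rho/\delta)$ admissible values'' is only valid once $q_k(\omega)$ and $\overline{\xi}_k(\omega)$ are both fixed. You acknowledge that the count ``is encoded by the pair $(q,\omega)$'' and that the dependence of $q$ on $\omega$ is delicate, but you do not carry out the enumeration of $q$ or the ``integration over $\omega$.'' The $q$-enumeration is not cosmetic: per coordinate there are $\sim d_j|\overline{\xi}_k|$ candidate integers, and naively multiplying by $O(\rho/\delta)$ per coordinate gives a bound that exceeds the target by roughly $n^{n/2}$; only after importing the integer-ball count (Lemma~2.6 applied to $q$) together with incompressibility does the factor $d_j/\sqrt{n}$ per coordinate come out right. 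The paper sidesteps these bookkeeping issues entirely by reformulating the count as a probability $\textsf{P}_W$ over $W$ uniformly distributed on $\Lambda_{d_j}$ (Lemma~3.2), conditioning on the event $\mathcal{E}_{3.2}$ that $\overline{\xi}$ has bounded $\ell_2$ norm and a proportional number of coordinates $\ge 1$, and then applying a deterministic per-coordinate anti-concentration estimate (Lemma~3.3) and tensorization. In that framing, the integer $q$ never needs to be enumerated; the factor you are missing is automatically accounted for by measuring a \emph{fraction} of the already-small set $\Lambda_{d_j}$ rather than a raw count. Your proposal would need both the reinstatement of the incompressibility constraint and the explicit $q$/Fubini bookkeeping to close the argument.
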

To prove this lemma, we need to estimate the number of vectors in the set that satisfy the property $\mathrm{d}_{A}\left( x,\mathbb{Z}^{n} \right) < 2\rho \sqrt{n}$, thus we need the following lemmas.
\begin{mylem}\label{Lem_2}
Set $\Lambda_{d_{j}}: = \left\{ \Vert u \Vert_{2} \in [\frac{1}{2}d_{j},4d_{j}]  \right\} \cap \left\{ \frac{u_{j}}{\Vert u_{j} \Vert_{2}} \in \mathrm{Incomp}\left( \tau^{2},\tau^{4}/2 \right) \right\} \cap \delta \mathbb{Z}^{n}$, let W be a vector uniformly distributed in the set $\Lambda_{d_{j}}$. Then for all $\mathrm{Col}_{i}\left( A \right),\ i \in [n]$.
$$\textsf{P}_{W} \left\{ \textsf{E}\mathrm{dist}^{2}\left( W\star \overline{\mathrm{Col}_{i}\left( A \right)},\mathbb{Z}^{n} \right) \le \left( 2\rho \sqrt{n} \right)^{2} \right\} < \left( C_{\ref{Lem_2}}\rho \right)^{c_{\ref{Lem_2}}n}.$$
where $C_{\ref{Lem_2}},c_{\ref{Lem_2}}>0$ depending only on $\tau,K$.
\end{mylem}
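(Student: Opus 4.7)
The plan is to estimate both the size of $\Lambda_{d_j}$ from below and the number of ``bad'' $W\in\Lambda_{d_j}$ from above, and to take their ratio. Throughout, fix $i$ and write $\overline{X}:=\overline{\mathrm{Col}_i(A)}$ with $X_k=A_{k,i}$. Since the coordinates of $\overline{X}$ are independent,
\[
\textsf{E}\mathrm{dist}^{2}\bigl(W\star\overline{X},\mathbb{Z}^{n}\bigr)=\sum_{k=1}^{n}g_{k}(W_{k}),\qquad g_{k}(w):=\textsf{E}\mathrm{dist}^{2}\bigl(w\overline{X}_{k},\mathbb{Z}\bigr).
\]
The proof will hinge on understanding the ``level set'' $\{w\in\delta\mathbb{Z}: g_{k}(w)\le t\}$ for small $t$ and then combining this per-coordinate count with incompressibility.

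\textbf{Step 1 (reduction to a coordinate-wise event).} First I would apply Markov's inequality to the sum $\sum_{k}g_{k}(W_{k})\le 4\rho^{2}n$: it forces the existence of a subset $J\subset[n]$ with $|J|\ge(1-\sigma)n$ such that $g_{k}(W_{k})\le 4\rho^{2}/\sigma$ on $J$, for a small $\sigma=\sigma(\tau)>0$ to be fixed later.

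\textbf{Step 2 (counting on a single coordinate).} The key sublemma is that, provided $\delta\le\rho$ and $d_{j}\ge r\sqrt{n}$,
\[
\#\bigl\{w\in\delta\mathbb{Z}\cap[-4d_{j},4d_{j}]:\;g_{k}(w)\le 4\rho^{2}/\sigma\bigr\}\;\le\; C\rho\cdot\frac{d_{j}}{\delta\sqrt{n}},
\]
up to lower-order additive terms. The idea is that $g_{k}(w)\ge c(1-\textsf{E}\cos(2\pi w\overline{X}_{k}))$, and the second-moment normalization $\textsf{E}\overline{X}_{k}^{2}=2$ together with the pointwise lower bound $\textsf{P}(|\overline{X}_{k}|\ge 1)\ge p$ from Lemma 2.4 forces $|\phi_{\overline{X}_{k}}(2\pi w)|$ to stay uniformly below $1$ outside a sparse collection of ``arithmetic resonance'' intervals of total measure $O(\rho)$ within $[-4d_{j},4d_{j}]$; intersecting with $\delta\mathbb{Z}$ gives the displayed count. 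The subgaussian bound $\|X_{k}\|_{\psi_{2}}\le K$ is used to control the size of $\overline{X}_{k}$ so that the resonance analysis applies uniformly in $k$.

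\textbf{Step 3 (incompressibility selects a large good set).} Using the standard spread estimate for incompressible vectors (Lemma 3.4 of \cite{Rudelsonadvance}, already invoked in the proof of Lemma \ref{Large RLCD}) applied to $W/\|W\|_{2}\in\mathrm{Incomp}(\tau^{2},\tau^{4}/2)$, there is a set $I(W)\subset[n]$ with $|I(W)|\ge c(\tau)n$ on which $|W_{k}|\ge c(\tau)d_{j}/\sqrt{n}$. Choosing $\sigma$ small compared with $c(\tau)$, pigeonhole gives $|I(W)\cap J|\ge c'(\tau)n$.

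\textbf{Step 4 (assembling the upper count).} Fix admissible choices of $J$ and $I$; there are at most $4^{n}$ such choices. For the $c'(\tau)n$ coordinates in $I\cap J$, Step 2 provides at most $C\rho\cdot d_{j}/(\delta\sqrt{n})$ options each; for the remaining coordinates I would only use the norm bound $\|W\|_{2}\le 4d_{j}$ and Lemma 2.6, which contribute at most $(Cd_{j}/(\delta\sqrt{n}))^{(1-c'(\tau))n}$ choices. Multiplying,
\[
\#\bigl\{W\in\Lambda_{d_{j}}:\textsf{E}\mathrm{dist}^{2}(W\star\overline{X},\mathbb{Z}^{n})\le 4\rho^{2}n\bigr\}\;\le\;(C\rho)^{c(\tau)n}\left(\frac{Cd_{j}}{\delta\sqrt{n}}\right)^{n}.
\]
To close, a matching lower bound $|\Lambda_{d_{j}}|\ge(c d_{j}/(\delta\sqrt{n}))^{n}$ follows again from Lemma 2.6 and the fact that the incompressible spherical shell of radius in $[d_{j},4d_{j}]$ contains a definite fraction of the volume of the full shell once $d_{j}\ge r\sqrt{n}$. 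Dividing yields the asserted bound $(C\rho)^{cn}$.

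\textbf{Main obstacle.} The delicate step is Step~2: one must convert the \emph{expected} distance-squared bound $g_{k}(w)\le 4\rho^{2}/\sigma$ into an \emph{explicit} count of lattice points, uniformly in the distribution of $\overline{X}_{k}$. The difficulty is that $\overline{X}_{k}$ may put mass on rational values, creating narrow resonance intervals where $g_{k}$ is anomalously small; one must argue that the total Lebesgue measure of these resonances within a window of length $O(d_{j})$ is $O(\rho d_{j}/\sqrt{n})$. This is where both assumptions in \eqref{Eq_condition}---the sharp second-moment normalization and the subgaussian tail---and the quantitative anti-concentration of $\overline{X}_{k}$ from Lemma 2.4 combine nontrivially. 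The lower bound on $|\Lambda_{d_{j}}|$ in Step~5 is not deep but must be handled carefully to avoid losing polynomial factors that would spoil the $(C\rho)^{cn}$ rate.
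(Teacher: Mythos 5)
Your overall strategy — lower-bound the size of $\Lambda_{d_j}$, upper-bound the number of ``bad'' lattice points, and take the ratio — is a reasonable framework, but it parts ways with the paper at the crucial step, and the step where it differs is exactly where your sketch has a gap. The paper does \emph{not} analyze the deterministic function $g_k(w)=\textsf{E}\,\mathrm{dist}^2(w\overline{X}_k,\mathbb{Z})$. Instead it decomposes $\Lambda_{d_j}=\bigcup_J\Lambda_{d_j}^J$ according to the spread set $J$, \emph{conditions on the realization of $\overline{X}$} via Markov to get a high-probability event (a $t$-fraction of coordinates in $J$ has $|\overline{X}_i|\ge 1$ and $\|\overline{X}\|_2^2$ is bounded, shown via Lemma~2.4 and a subgaussian tail bound — this is the event $\mathcal{E}_{3.2}$ with $\textsf{P}(\mathcal{E}_{3.2})>p/2$), and then, for a \emph{fixed} realization $x$ satisfying these constraints, proves the auxiliary Lemma~3.3: over the uniform choice of $W\in\Lambda_{d_j}^J$, each product $W_i x_i$ (with $i$ in a set of size $\ge c(\tau)n$ where $|x_i|\in[1,u]$ and $|W_i|\asymp d_j/\sqrt{n}$) ranges over a lattice of step $\le u\rho$ and diameter $\ge c\tau^4 r$, so $\textsf{P}(\mathrm{dist}(W_ix_i,\mathbb{Z})<\epsilon)\lesssim \epsilon/(\tau^4 r)$ by a one-line counting argument, after which a tensorization finishes. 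No characteristic-function or resonance analysis is needed.

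Your Step~2 is the replacement for this and it does not go through as stated. First, the claim that the resonance set has total measure ``$O(\rho)$ within $[-4d_j,4d_j]$'' is off by a factor of the window length: already for Rademacher entries, the level set $\{w:g_k(w)\le \rho^2\}$ is a union of $\Theta(d_j)$ intervals of width $\Theta(\rho)$, giving total measure $\Theta(\rho\, d_j)$, not $O(\rho)$. Correspondingly the claimed per-coordinate count $C\rho\, d_j/(\delta\sqrt{n})$ over $[-4d_j,4d_j]$ is too small by a factor of $\sqrt{n}$; it would only be of this order if the window were the spread window $|w|\asymp d_j/\sqrt{n}$, which is what you would actually need, but your Step~2 does not restrict the window. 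Second, and more fundamentally, the structure theorem you are invoking — that the sub-level set $\{w:g_k(w)\le\epsilon\}$ is covered by a controlled union of ``resonance intervals'' \emph{uniformly over all} subgaussian laws satisfying \eqref{Eq_condition} — is a genuine Hal\'{a}sz/Ess\'{e}en-type statement and is not a corollary of the facts you cite; you flag it as the ``main obstacle'' but propose no argument. This is precisely the difficulty the paper's conditional approach is designed to avoid: once $\overline{X}$ is fixed to a realization with $|x_i|\in[1,u]$, the relevant product $W_ix_i$ is supported on a genuine arithmetic progression, and the anti-concentration becomes an elementary lattice count rather than a Fourier-analytic fact about an unknown distribution. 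To repair your proposal you would need either to prove the resonance structure lemma (with the correct measure $O(\rho\cdot(\text{window length}))$) or to switch to the paper's conditioning. A further, smaller point: your closing division requires a matching lower bound $|\Lambda_{d_j}|\gtrsim(c d_j/(\delta\sqrt{n}))^n$, which the paper never needs (it works with $\textsf{P}_W$ directly on each $\Lambda_{d_j}^J$); this lower bound is plausible but adds an extra step to justify, since $\Lambda_{d_j}$ is a lattice slice of an \emph{incompressible} shell rather than of the full shell.
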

For proving Lemma \ref{Lem_2},  we set $$\Lambda_{d_{j}}^{J} = \Lambda_{d_{j}} \cap \left\{ u: \frac{|u_{i}|}{\Vert u \Vert_{2}} \ge \frac{\tau^{4}}{2\sqrt{2n}},~~\text{for}~~\text{all}~~ i \in J \right\}.$$ Note that there exists $c_{\tau}=c\left( \tau \right)$, such that $\Lambda_{d_{j}}^{J} \ne \emptyset$ if  $\left| J \right| \ge c_{\tau}n$, and $\Lambda_{d_{j}}^{J} = \emptyset$, if $\left| J \right| \le c_{\tau}n$.

\begin{mylem} \label{lm3.4}
For any $t \in \left( 0,1\right)$ and $u > 1$, there exists $n_{0}=n\left( t,u,\tau,K \right)$. When $n \ge n_{0}$, for all $\left| J \right| \ge c_{\tau}n$. If $x \in \mathbb{R}^{n}$ satisfy that
\begin{align}
\Vert x \Vert_{2}^{2} \le \frac{c_{\tau}t}{2}u^{2}n \quad , \quad \left|\left\{ i \in J:\left| x_{i} \right| \ge 1 \right\}\right| \ge t\left| J \right|.\nonumber
\end{align}
and if $W$ be a vector uniformly distributed on the set $\Lambda_{d_{j}}^{J}$.
Thus, for $\rho \ge \delta$, we have 
\begin{align}
\textsf{P}_{W}\left\{ \mathrm{dist}\left( W\star x,\mathbb{Z}^{n} \right)^{2} \le \rho^{2}n \right\} \le \left( C_{\ref{lm3.4}}\rho \right)^{c_{\ref{lm3.4}}n}\nonumber
\end{align}
where $C_{\ref{lm3.4}},c_{\ref{lm3.4}}> 0$ depending only on $t,u,\tau,K$.
\end{mylem}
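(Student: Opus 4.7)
The plan is a three-step argument: isolate coordinates where $|x_i|$ lies in a useful bounded range, spend the $\ell_2$ small ball budget via a Markov pigeonhole, and close with a one dimensional lattice count for $W_i\in\delta\mathbb{Z}$.

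First I would observe that the hypotheses $|\{i\in J:|x_i|\ge 1\}|\ge t|J|$, $|J|\ge c_\tau n$, and $\|x\|_2^2\le (c_\tau t/2)u^2 n$ force the ``useful'' index set
\[
I^\ast:=\{i\in J:1\le |x_i|\le u\}
\]
to satisfy $|I^\ast|\ge (tc_\tau/2)\,n$, since more than $(tc_\tau/2)n$ indices with $|x_i|>u$ would already violate the $\ell_2$ bound on $x$.

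Second, if $\mathrm{dist}(W\star x,\mathbb{Z}^n)^2\le \rho^2 n$, then $\sum_{i=1}^n\mathrm{dist}(W_i x_i,\mathbb{Z})^2\le \rho^2 n$, so a Markov step gives, for any $s>0$, at most $sn$ coordinates with $\mathrm{dist}(W_i x_i,\mathbb{Z})>\rho/\sqrt{s}$. Choosing $s=tc_\tau/8$ and intersecting with $I^\ast$ extracts a random subset $I'=I'(W)\subseteq I^\ast$ of size at least $(tc_\tau/4)n$ along which $\mathrm{dist}(W_i x_i,\mathbb{Z})\le C_0\rho$, with $C_0=C_0(t,\tau)$. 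A union bound over the at most $2^n$ possible sets $I'$ reduces the claim to bounding, for each fixed such $I'$,
\[
\textsf{P}_W\bigl\{\mathrm{dist}(W_i x_i,\mathbb{Z})\le C_0\rho\ \text{for every}\ i\in I'\bigr\}.
\]

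Third, I would perform a one dimensional lattice count coordinate by coordinate. For each $i\in I^\ast$ the admissible values $w=W_i$ lie in $\delta\mathbb{Z}\cap[-4d_j,4d_j]$, whose cardinality is of order $d_j/\delta$. Counting how many of these land in $\bigcup_{k\in\mathbb{Z}}[k/x_i-C_0\rho/|x_i|,\,k/x_i+C_0\rho/|x_i|]$ and using $1\le |x_i|\le u$ together with $\delta\le\rho$, a standard Dirichlet type estimate shows that the fraction of bad $w$ is at most $C'(t,u,\tau)\,\rho$. Multiplying this over the $|I'|\ge (tc_\tau/4)n$ coordinates and absorbing the $2^n$ union bound factor into the base of the exponential yields the target bound $(C\rho)^{cn}$.

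The main obstacle is the transition from per-coordinate control to a joint small ball estimate: the one dimensional count cleanly bounds the uniform measure on the product box $\widetilde\Lambda:=\delta\mathbb{Z}^n\cap[-4d_j,4d_j]^n$, but $W$ is uniform on $\Lambda_{d_j}^J$, whose defining inequalities $d_j/2\le\|W\|_2\le 4d_j$ and $|W_i|/\|W\|_2\ge\tau^4/(2\sqrt{2n})$ for $i\in J$ couple the coordinates of $W$ in a nontrivial way. The plan is to upper bound the count of bad $W\in\Lambda_{d_j}^J$ by the corresponding count in $\widetilde\Lambda$ (where the event factorizes and the per-coordinate estimate applies) and separately to prove a matching lower bound $|\Lambda_{d_j}^J|\ge\gamma(t,\tau)^n\,|\widetilde\Lambda|$ by a direct construction that exploits $d_j\ge r\sqrt{n}$, the spread condition, and the already imposed incompressibility of the normalized vectors. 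The hypothesis $n\ge n_0(t,u,\tau,K)$ enters precisely to make the $O(1)$ error terms in the one dimensional count and in this volume comparison negligible compared to the exponent $(tc_\tau/4)n$.
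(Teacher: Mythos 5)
Your opening two steps mirror the paper's: isolate $I:=\{i\in J:1\le|x_i|\le u\}$ (what you call $I^\ast$) and check $|I|\gtrsim n$ from the $\ell_2$ bound on $x$ and the hypothesis $|\{i\in J:|x_i|\ge 1\}|\ge t|J|$; then do a one-dimensional lattice count. The gap is in your third step, in the volume comparison you use to pass from the product box back to the uniform measure on $\Lambda_{d_j}^J$.

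You want $|\Lambda_{d_j}^J|\ge\gamma(t,\tau)^n\,|\widetilde\Lambda|$ with $\widetilde\Lambda:=\delta\mathbb{Z}^n\cap[-4d_j,4d_j]^n$. This is false by a super-exponential margin, and the failure is structural: the constraint $\|u\|_2\le 4d_j$ places $\Lambda_{d_j}^J$ inside $\delta\mathbb{Z}^n\cap B(0,4d_j)$, and Lemma 2.6 (after rescaling by $\delta$) gives $|\delta\mathbb{Z}^n\cap B(0,4d_j)|\le\bigl(2+Cd_j/(\delta\sqrt{n})\bigr)^n$. For $d_j\ge r\sqrt{n}$ and $\delta\le\rho<1$ this is of order $\bigl(C'd_j/(\delta\sqrt{n})\bigr)^n$, whereas $|\widetilde\Lambda|\approx(8d_j/\delta)^n$. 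So $|\Lambda_{d_j}^J|/|\widetilde\Lambda|\le(C''/\sqrt{n})^n$, which decays faster than any fixed $\gamma^n$. The factor you would incur when passing from $\Lambda_{d_j}^J$ to $\widetilde\Lambda$ is therefore $(\sqrt{n}/c)^n$, which cannot be absorbed into $(C\rho)^{cn}$. Taking $n\ge n_0$ makes it worse, not better; this is not an $O(1)$ error as you hoped.

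The paper never passes through the full cube, which is why it avoids this loss. Because $W$ ranges over $\Lambda_{d_j}^J$, the spread constraint forces $|W_i|\ge\frac{\tau^4}{2\sqrt{2n}}\|W\|_2\ge c\tau^4 r$ for $i\in J$, while the lattice step of $W_ix_i$ is $\delta|x_i|\le u\delta\le u\rho$; so $W_ix_i$ lives on a lattice interval of diameter at least $c\tau^4 r$ with step at most $u\rho$, and the single-coordinate bound $\textsf{P}\{\mathrm{dist}(W_ix_i,\mathbb{Z})<\epsilon\}\le c\epsilon/(\tau^4 r)$ for $\epsilon\ge u\rho$ follows directly. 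The paper then tensorizes this over $i\in I$ (in the spirit of Lemma 2.1), getting a per-coordinate factor that is genuinely $O(\rho/r)$ with no hidden $\sqrt{n}$. Your Markov pigeonhole plus $2^n$ union is a fine alternative wrapper around such a per-coordinate bound, but the route through $\widetilde\Lambda$ cannot supply a per-coordinate bound of the right size, so the argument as written does not close.
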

\begin{proof}
\par 
Assume that $I_{0} = \left\{ i \in J:|x_{i}|  \ge 1 \right\}$
and $I = \left\{ i \in I_{0}: |x_{i}| \le u \right\}$, then
$$u^{2}\left( |I_{0}|-|I| \right) \le \sum_{i \in I_{0}\setminus I} {|x_{i}|^{2}} \le \sum_{i \in I_{0}}{|x_{i}|^{2}} \le \frac{c_{\tau}t}{2}u^{2}n.$$
We obtain
$$|I| \ge \frac{c_{\tau}t}{2}u^{2}n.$$

Furthermore, set $W = \delta Y$, on the one hand, for $i \in I$ 
$$\delta |x_{i}| \le u \delta \le u \rho .$$
On the other hand, for $i \in I$
$$|w_{i}||x_{i}| \ge \frac{\tau^{4}}{2\sqrt{2n}}\Vert w \Vert_{2} \ge c\tau^{4}r.$$
Thus,  the random variable $|w_{i}||x_{i}|$ is uniformly distributed on a lattice interval of diameter at least $c\tau^{4}r$,  and 

$$\textsf{P}\left\{ \mathrm{dist}\left( w_{i}x_{i},\mathbb{Z} \right) < \epsilon \right\} \le \frac{c\epsilon}{\tau^{4}r} \quad \text{for}~\text{any}~\epsilon \ge u\rho,$$
where $c>0$ is an absolute constant.
\par 
 Using  Lemma \ref{lmn2.1},  we have that 
$$\textsf{P}\left\{ \mathrm{dist}\left( W\star x,\mathbb{Z}^{n} \right)^{2} < \epsilon^{2}|I| \right\} \le \left( \frac{\tilde{c}\epsilon}{\tau^{4}r} \right)^{|I|} \quad\text{for}~\text{any}~\epsilon \ge u\rho.$$

Let $\epsilon=\max{\left( u\rho,\sqrt{\frac{2}{c_{\tau}t}}\rho \right)}$, we obtain
$$\textsf{P}\left\{ \mathrm{dist}\left( W\star x,\mathbb{Z}^{n} \right)^{2} \le \rho^{2}n \right\} \le \left( C\rho \right)^{cn}.$$

\end{proof}

\begin{proof}[\textsf{Proof of Lemma \ref{Lem_2}}]
Note that $\Lambda_{d_{j}}=\bigcup_{J \subset [n]}{\Lambda_{d_{j}}^{J}}$
where $|J| \ge c_{\tau}n$.
We now only need to prove that, if $W$ is a vector uniformly distributed on the set of $\Lambda_{d_{j}}^{J}$, then 
\begin{align}
\textsf{P}_{W}\left\{ \mathrm{E}\mathrm{dist}^{2}\left( W\star \overline{x},\mathbb{Z}^{n} \right) \le 4\rho^{2}n \right\} \le \left( C\rho \right)^{cn}     \nonumber
\end{align}
where $x=\left( x_{1},\dots,x_{n} \right)$ satisfy that (\ref{Eq_condition}).
\par 
Applying Lemma \ref{lm2.4},  $\textsf{P}\left\{ |\overline{x}_{i}| \ge 1 \right\} \ge p$.
On the one hand, with at least $1-\frac{1-p}{1-t}$ probability, 
\begin{align}
|\left\{ i \in J: |\overline{x}_{i}| \ge 1 \right\}| \ge t|J|.
\end{align}
On the other hand,  if $X_{i}$ is the subgaussian random variables and $\Vert X_{i} \Vert_{\psi_{2}} \le K$, 
$$\textsf{P}\left\{ \Vert \overline{X} \Vert_{2}^{2} \ge Cn \right\} \le \exp{\left( \left(2 \log{2}-\frac{C}{K}\right)n \right)}$$
holds. Thus,  set $C=\frac{c_{\tau}t}{2}u^{2} > 2K\log{2}$ and we  have 
\begin{align}
\Vert x \Vert_{2}^{2} \ge \frac{c_{\tau}t}{2}u^{2}n
\end{align}
with probability at least $1-\exp{\left( \left( 2 \log{2}-\frac{c_{\tau}tu^{2}}{2K} \right)n \right)}$.
\par 
Denote the event $\mathcal{E}_{\ref{Lem_2}}$ is that (3.2) and (3.3) occur at the same time.
Then we can choose $t=t\left( \tau,K \right)$ and $u=u\left( \tau,K,t \right)$ 
satisfies 
$$\textsf{P}\left( \mathcal{E}_{\ref{Lem_2}}\right)>\frac{p}{2}.$$
Applying Lemma \ref{lm3.4} with $2^{3/2}p^{-1/2}\rho$,
\begin{equation}
\begin{aligned}
\textsf{P}_{W}\left\{ \mathrm{E}\mathrm{dist}^{2}\left( W\star \overline{x},\mathbb{Z}^{n} \right) \ge 4\rho^{2}n \right\}
& \ge \textsf{P}_{W}\left\{ \mathrm{E}_{\mathcal{E}_{3.2}}\mathrm{dist}^{2}\left( W\star \overline{x},\mathbb{Z}^{n} \right) \ge 8p^{-1}\rho^{2}n \right\}\\
& \ge 1-\left( C\rho \right)^{cn}.\nonumber
\end{aligned}
\end{equation}
We have 
\begin{align}
\textsf{P}_{W}\left\{ \mathrm{E}\mathrm{dist}^{2}\left( W\star \overline{x},\mathbb{Z}^{n} \right) < 4\rho^{2}n \right\} \le \left( C\rho \right)^{cn},
\end{align}
where $C,c > 0$ depending only on $\tau,K$.
\end{proof}

\begin{proof}[\textsf{Proof of Lemma \ref{Lem_1}}]
Applying Lemma \ref{lemm2.11}
\begin{align}
\left| \Lambda_{d_{j}} \right| \le \left| \delta \mathbb{Z}^{n} \cap B\left( 0,4d_{j} \right) \right| \le \left( 2+\frac{cd_{j}}{\delta \sqrt{n}} \right)^{n} \le \left( \frac{\tilde{c}d_{j}}{r\delta\sqrt{n}} \right)^{n}.\nonumber
\end{align}
Furthermore 
\begin{equation}
\begin{aligned}
\left| \mathcal{N}_{d} \right| 
& \le \left[ \sum_{j=1}^{n}{\textsf{P}_{W}\left( \mathrm{E}\mathrm{dist}^{2}\left( W\star \overline{\mathrm{Col}_{j}\left( A\right)},\mathbb{Z}^{n} \right) < 4\rho^{2}n \right)} \right]^{l} \cdot \prod_{j=1}^{l}|\Lambda_{d_{j}}| \\
& \le n^{l} \cdot \left( \frac{C\rho^{c}}{r\delta} \right)^{ln}\cdot \left( \prod_{j=1}^{l}{\frac{d_{j}}{\sqrt{n}}} \right)^{n}.\nonumber
\end{aligned}
\end{equation}
This completes the proof of Lemma \ref{Lem_1}.
\end{proof}
\subsection{Approximating}
This section is crucial for proving Proposition \ref{Propo_1}, which shows that for any $\left( v_{1},\dots,v_{l} \right) \in \mathrm{W}_{d}$, there exists $\left( u_{1},\dots,u_{l} \right) \in \mathcal{N}_{d}$ which approximates it in various ways. We also need to control the RLCD of the matrix $U$ formed by $u_{1},\dots,u_{l}$. 

The following lemma is about the randomized rounding. To the best of authors' knowledge, due to Livshyts \cite{Livshyts_1}, this method is used to choose a best lattice approximation for a vector.

\begin{mylem} \label{lm3.5}
Let $k \le cn$, and $d=\left( d_{1},\dots,d_{l} \right) \in [r\sqrt{n},R]^{l}$. Let $\delta >0$ be a small enough constant satisfying (3.1). Let $B$ be an $(n-k) \times n$ matrix such that $\Vert B \Vert_{\mathrm{HS}} \le 2kn$. For any $v_{1},\dots,v_{l} \in W_{d} \cap \ker{B}$, there exist $u_{1},\dots,u_{l} \in \mathcal{N}_{d}$ with the following properties:

\begin{enumerate}
    \item $\Vert u_{j}-v_{j} \Vert_{\infty} \le \delta \ for \ all \ j\in [l]$;
\item Let $U$ and $V$ be $n \times l$ matrices with columns $u_{1},\dots,u_{l}$ and $v_{1},\dots,v_{l}$ respectively. Then 
$$\Vert U-V \Vert \le C \delta \sqrt{n};$$
\item $\left( u_{1},\dots,u_{l} \right)$ is $(\frac{1}{4})$-almost orthogonal;
\item $\mathrm{span}\left( u_{1},\dots,u_{l} \right) \cap S^{n-1} \subset \mathrm{Incomp}\left( \tau^{2},\tau^{4}/2 \right)$;
\item $\mathrm{d}_{A}\left( u_{j},\mathbb{Z}^{n} \right)<2\rho\sqrt{n};$
\item $U$ is a matrix as in (2), then
$$\mathrm{d}_{A}\left( U\theta,\mathbb{Z}^{n} \right) > \frac{\rho}{2} \sqrt{n}$$
for any $\theta \in \mathbb{R}$ satisfying 
$$\Vert \theta \Vert_{2} \le \frac{1}{20\sqrt{l}} \quad and \quad \Vert U\theta \Vert_{2} \ge 8r\sqrt{n};$$
\item $\Vert Bu_{j} \Vert_{2} \le 2K\delta n~~\text{for}~~\text{all}\ j \in[l]$. 
\end{enumerate}

\end{mylem}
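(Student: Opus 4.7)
The plan is a \emph{random rounding} argument in the spirit of Lemma~5.3 of \cite{Rudelsonrank}. For each pair $(j,i)\in[l]\times[n]$, define independently
\begin{align}
u_{j,i}=\delta\bigl\lfloor v_{j,i}/\delta\bigr\rfloor+\delta\xi_{j,i},\nonumber
\end{align}
where $\xi_{j,i}\in\{0,1\}$ is a Bernoulli variable whose mean equals the fractional part of $v_{j,i}/\delta$. This gives $u_{j,i}\in\delta\mathbb{Z}$, $\mathbb{E}u_{j,i}=v_{j,i}$, $|u_{j,i}-v_{j,i}|\le\delta$, and the entries of $U-V$ are independent, centered, uniformly bounded by $\delta$ with variance at most $\delta^2/4$. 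Property (1) is immediate.

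Properties (2)--(4) flow from a single operator-norm control. Applying Lemma~2.2 to $(U-V)/\delta$ yields $\|U-V\|\le C\delta\sqrt{n}$ with probability $\ge 1-\exp(-cn)$; this is (2). On this event, the perturbation inequality on the column-normalized matrices, combined with $\|v_j\|_2\ge r\sqrt{n}$ (from $v_j\in W_d$), gives $\|U_0-V_0\|\le 2\sqrt{l}\delta/r$, which upgrades the $(1/8)$-almost orthogonality of $(v_j)$ to $(1/4)$-almost orthogonality of $(u_j)$ for $\delta$ small compared with $r/\sqrt{l}$, yielding (3). Any unit vector $x\in\mathrm{span}(u_j)\cap S^{n-1}$ then lies within $O(\sqrt{l}\delta/r)$ of a unit vector in $\mathrm{span}(v_j)\subset\mathrm{Incomp}(\tau^2,\tau^4)$, forcing $x\in\mathrm{Incomp}(\tau^2,\tau^4/2)$, which is (4).

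Properties (5) and (6) are deterministic consequences of the uniform bounds $\|u_j-v_j\|_\infty\le\delta$ and $\|U-V\|\le C\delta\sqrt{n}$. For (5), picking $i^{\ast}$ to minimize $\mathbb{E}\mathrm{dist}^2(v_j\star\overline{\mathrm{Col}_{i^{\ast}}(A)},\mathbb{Z}^n)$ and using $(a+b)^2\le 2a^2+2b^2$ with $\mathbb{E}\overline{X}_{i,k}^2=2$ gives
\begin{align}
\mathrm{d}_A(u_j,\mathbb{Z}^n)^2 \le 2\mathrm{d}_A(v_j,\mathbb{Z}^n)^2 + 4n\delta^2 \le 2\rho^2 n+4\delta^2 n,\nonumber
\end{align}
which is strictly below $(2\rho\sqrt{n})^2$ when $\delta<\rho/\sqrt{2}$. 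For (6), fix any admissible $\theta$: the reverse triangle inequality $\|V\theta\|_2\ge\|U\theta\|_2-\|U-V\|\|\theta\|_2\ge 2r\sqrt{n}$ (valid for $\delta\lesssim r$) lets us invoke hypothesis (5) of Proposition~\ref{Propo_1} to get $\mathrm{d}_A(V\theta,\mathbb{Z}^n)>\rho\sqrt{n}$; the same expansion, now applied in reverse, yields
\begin{align}
\mathrm{d}_A(U\theta,\mathbb{Z}^n)\ge \mathrm{d}_A(V\theta,\mathbb{Z}^n)-\sqrt{2}\|(U-V)\theta\|_2 > \rho\sqrt{n}-C\delta\sqrt{n/l},\nonumber
\end{align}
which exceeds $\rho\sqrt{n}/2$ for $\delta$ sufficiently small in comparison with $\rho$.

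Property (7) is the only one requiring concentration of the rounding noise. Since $Bv_j=0$, $\|Bu_j\|_2=\delta\|B\eta_j\|_2$ where $\eta_j=(u_j-v_j)/\delta$ has independent, centered, $[-1,1]$-bounded entries; this is a quadratic form with $\mathbb{E}\|B\eta_j\|_2^2\le(1/4)\|B\|_{\mathrm{HS}}^2\le K^2 n^2$, and a Hanson--Wright-type concentration (available for bounded subgaussian vectors) gives $\|Bu_j\|_2\le 2K\delta n$ with probability $\ge 1-\exp(-cn)$ for each $j$. A union bound over the $l$ instances of (7), combined with (2), (5), (6), produces a realization of $\xi$ for which all seven properties hold simultaneously, yielding the required $(u_1,\dots,u_l)\in\mathcal{N}_d$. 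The main obstacle is the \emph{joint} calibration of $\delta$: it must be small enough that the perturbation losses in (3), (4), and (6) stay below the prescribed slacks $r$, $\tau^4/2$, and $\rho/2$ respectively, but it cannot be taken arbitrarily small because the lattice count in Lemma~\ref{Lem_1} scales like $(1/\delta)^{ln}$, and this factor must ultimately be absorbed by the small-ball estimate used in the proof of Proposition~\ref{Propo_1}.
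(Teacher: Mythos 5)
Your random rounding construction is the same as the paper's, and properties (1)--(6) follow by essentially the same reasoning (two minor points: for (3) the efficient route bounds $\Vert U_{0}-V_{0}\Vert$ via the operator-norm estimate of (2) rather than column-by-column, which removes the spurious $\sqrt{l}$ factor in your bound $2\sqrt{l}\,\delta/r$; and in (6) the Schur-product step must be passed through $\Vert (U-V)\theta\Vert_{\infty}\Vert \overline{\mathrm{Col}_{i}(A)}\Vert_{2}$ rather than $\sqrt{2}\,\Vert (U-V)\theta\Vert_{2}$, since $\Vert \overline{\mathrm{Col}_{i}(A)}\Vert_{\infty}$ is not deterministically bounded).

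The genuine gap is in property (7). You claim a Hanson--Wright-type bound gives $\textsf{P}\bigl(\Vert Bu_{j}\Vert_{2} > 2K\delta n\bigr) \le \exp(-cn)$, but the lemma only assumes $\Vert B\Vert_{\mathrm{HS}} \le 2Kn$ with no control on $\Vert B\Vert$. If $B$ is (close to) rank one with $\Vert B\Vert_{\mathrm{HS}}\approx 2Kn$, then $\Vert B^{T}B\Vert \approx 4K^{2}n^{2}$, and the Hanson--Wright tail $\exp\bigl(-c\min(t^{2}/\Vert B^{T}B\Vert_{\mathrm{HS}}^{2},\,t/\Vert B^{T}B\Vert)\bigr)$ at the relevant deviation scale $t\sim K^{2}n^{2}$ degenerates to a constant; there is no exponential decay, so the union bound over $l$ instances of (7) cannot close the argument. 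The paper's proof sidesteps this. By Markov, $\textsf{E}\Vert Bu_{j}\Vert_{2}^{2}=\delta^{2}\,\textsf{E}\Vert B\eta_{j}\Vert_{2}^{2}\le \tfrac{1}{4}\delta^{2}\Vert B\Vert_{\mathrm{HS}}^{2}\le K^{2}\delta^{2}n^{2}$ gives $\textsf{P}\bigl(\Vert Bu_{j}\Vert_{2}\le 2K\delta n\bigr)\ge 3/4$ for each $j$; since the rounding randomizations across $j$ are independent, all $l$ instances of (7) hold simultaneously with probability at least $2^{-l}$. Properties (1)--(6) fail on an event of probability at most $\exp(-cn)<2^{-l}$ (using $l\le k\le cn$), so the two events intersect, exhibiting a realization satisfying all seven properties. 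It is this positive-probability intersection, not a high-probability union bound, that the final step of the argument requires.
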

\begin{proof}
Let $(v_{1},\dots,v_{l}) \in W_{d}$, Choose $\left( \overline{v}_{1},\dots,\overline{v}_{l} \right) \in \delta \mathbb{Z}^{n}$ be such that 
$$v_{j} \in \overline{v}_{j} + \delta [0,1]^{n}.$$
Define independent random variables $\varepsilon_{ij}$, $i \in [n]$, $j \in [l]$ by setting 
$$\textsf{P}\left( \varepsilon_{ij} = \overline{v}_{j}(i)-v_{j}(i) \right) = 1- \frac{v_{j}(i)-\overline{v}_{j}(i)}{\delta}$$
and 
$$\textsf{P}\left( \varepsilon_{ij} = \overline{v}_{j}(i) - v_{j}(i) + \delta \right) = \frac{v_{j}(i) - \overline{v}_{j}(i)}{\delta}.$$
Then we have $|\varepsilon_{ij}| \le \delta$ and $\textsf{E}\varepsilon_{ij} = 0$. Let 
$$u_{j} = v_{j} + \sum_{i=1}^{n}{\varepsilon_{ij}e_{i}} \in \delta \mathbb{Z}^{n}.$$
From the proof of Lemma 5.3 in \cite{Rudelsonrank}, we have (1) and (2) occur with probability at least $1-\exp{(-cn)}$. Furthermore, we can get the (3), (4) by applying (1) and (2).
We will check (5) and (6) follows from (1) and (2).

Note that for any $i \in [n]$.
\begin{equation}
\begin{aligned}
\mathrm{dist}\left( u_{j}\star \overline{\mathrm{Col}_{i}\left(A \right)},\mathbb{Z}^{n} \right) 
& \le \mathrm{dist}\left( v_{j}\star \overline{\mathrm{Col}_{i}\left( A \right)},\mathbb{Z}^{n} \right)+\Vert u_{j}-v_{j} \Vert_{\infty} \Vert \overline{\mathrm{Col}_{i}\left( A \right)} \Vert_{2}\\
& \le \rho \sqrt{n} + \delta \Vert \overline{\mathrm{Col}_{i}\left( A \right)} \Vert_{2}.\nonumber
\end{aligned}
\end{equation}

Then 
\begin{equation}
\begin{aligned}
\mathrm{d}_{A}\left( u_{j},\mathbb{Z}^{n} \right)  
& =\sqrt{\min_{i}{\textsf{E}\mathrm{dist}^{2}\left( u_{j}\star \overline{\mathrm{Col}_{j}\left( A \right)}, \mathbb{Z}^{n} \right)}}\\
& \le \sqrt{\min_{i}\textsf{E}\left( \rho \sqrt{n} + \delta \Vert \overline{\mathrm{Col}_{j}\left( A \right)} \Vert_{2} \right)^{2}}\\
& < 2 \rho \sqrt{n}.\nonumber
\end{aligned}
\end{equation}

Furhermore, since the $(u_{1},\dots,u_{l})$ and $(v_{1},\dots,v_{l})$ are $\left( \frac{1}{2} \right)$-almost orthogonal, and $\Vert v_{j} \Vert_{2} \ge \frac{1}{2} \Vert u_{j} \Vert_{2}$, thus
\begin{align}
\Vert V \theta \Vert_{2}^{2} \ge \frac{1}{4}\sum_{j=1}^{l}{\theta_{j}^{2}\Vert v_{j} \Vert_{2}^{2} } \ge \frac{1}{16}\sum_{j=1}^{l}{\theta_{j}^{2}\Vert u_{j} \Vert_{2}^{2}} \ge \frac{1}{64} \Vert U\theta \Vert_{2}^{2} \ge r^{2}n.\nonumber
\end{align} 

As $(v_{1},\dots,v_{l}) \in W_{d}$, 

we have $\mathrm{d}_{A}\left( V\theta , \mathbb{Z}^{n} \right) \ge \rho \sqrt{n}$. 

Since 
$$\Vert (U-V)\theta \Vert_{\infty} = \max_{i \in [n]}{\left| \sum_{j=1}^{l}{\varepsilon_{ij}\theta_{j}} \right|} 
\le \max_{i \in [n] }{\sqrt{\sum_{j=1}^{l}{\varepsilon_{ij}^{2}}} \sqrt{\sum_{j=1}^{l}{\theta_{j}^{2}}}} \le \delta \sqrt{l} \cdot \frac{1}{20\sqrt{l}},$$

We obtain, for any $i \in [n]$,
\begin{equation}
\begin{aligned}
\mathrm{dist}\left( U\theta\star \overline{\mathrm{Col}_{i}(A)},\mathbb{Z}^{n} \right)
& \ge \mathrm{dist}\left( V\theta\star \overline{\mathrm{Col}_{i}(A)},\mathbb{Z}^{n} \right) - \Vert (U - V) \theta \Vert_{\infty} \Vert \overline{\mathrm{Col}_{i}(A)} \Vert_{2}\\
& \ge \rho \sqrt{n} - \delta \sqrt{l} \cdot \frac{1}{20 \sqrt{l}} \Vert \overline{\mathrm{Col}_{j}(A)} \Vert_{2}\\
& \ge \rho\sqrt{n} - c\delta \Vert \overline{\mathrm{Col}_{j}(A)} \Vert_{2}.\nonumber
\end{aligned}
\end{equation}

Therefore,
\begin{equation}
\begin{aligned}
\mathrm{d}_{A}\left( U\theta,\mathbb{Z}^{n} \right) 
& = \sqrt{\min_{i}{\textsf{E}\mathrm{dist}^{2}\left( U\theta\star \overline{\mathrm{Col}_{i}(A)},\mathbb{Z}^{n} \right) }}\\
& \ge \sqrt{\min_{i}{\textsf{E}\left( \rho \sqrt{n} - c\delta \Vert \overline{\mathrm{Col}_{j}(A)} \Vert_{2} \right)^{2}}} \\
& \ge \frac{\rho}{2} \sqrt{n}.\nonumber
\end{aligned}
\end{equation}

 In fact, we choose the small enough $\delta$ for check (5) and (6).

 Sum up, we have the (1)-(6) occur with the probability at least $1- \exp{\left( -cn \right)}$.

\par  Note that 
$$\textsf{P}\left( \Vert Bu_{j} \Vert_{2} \le 2K\delta n \text{for}~\text{all}~ \ j \in [l] \right) \ge 2^{-l}. $$

Thus 
$$1-\exp{(-cn)} +2^{-l} > 1.$$

It means that there exists $(u_{1},\dots,u_{l}) \in \mathcal{N}_{d}$ satisfying (1)-(7). It completes the proof of this lemma.
\end{proof}
\subsection{The proof of Proposition 3.1}
Before proving  Proposition \ref{Propo_1}, we need the following lemma. Firstly, we fix $\delta$, which has been chosen in Lemma \ref{lm3.4}.
\begin{mylem} \label{lm3.6}
Let $d = (d_{1},\dots,d_{l}) \in [r\sqrt{n},R]^{l}$, let $ck \le l \le k \le \frac{\delta}{20}\sqrt{n}$. Then 
$$\textsf{P}\left( W_{d}\cap \ker{(B)} \ne \emptyset \right) \le \exp{(-2ln)}.$$
\end{mylem}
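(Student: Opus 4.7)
The plan is to combine the approximation Lemma 3.4 with a union bound over $\mathcal{N}_{d}$ and a per-row small-ball estimate coming from Proposition 2.1 together with the tensorization Lemma 2.1. First, I would apply Lemma 2.3 to restrict to the high-probability event $\{\|B\|_{\mathrm{HS}}\le 2Kn\}$, whose complement has probability at most $e^{-cn^{2}}$ and is negligible compared to the target $e^{-2ln}$. On this event, Lemma 3.4 asserts that if $W_{d}\cap\ker(B)$ contains a tuple $(v_{1},\dots,v_{l})$, then there exists an approximating tuple $(u_{1},\dots,u_{l})\in\mathcal{N}_{d}$ satisfying conclusions (1)--(7) of Lemma 3.4; in particular $\|Bu_{j}\|_{2}\le 2K\delta n$ for every $j\in[l]$. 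Consequently,
$$\textsf{P}\bigl(W_{d}\cap\ker B\neq\varnothing\bigr)\le e^{-cn^{2}}+|\mathcal{N}_{d}|\cdot\sup_{(u_{j})}\textsf{P}\bigl(\|Bu_{j}\|_{2}\le 2K\delta n\text{ for all }j\in[l]\bigr),$$
where the supremum is over tuples $(u_{j})\in\mathcal{N}_{d}$ that also satisfy the geometric properties (3), (4) and (6) of Lemma 3.4.

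For such a fixed tuple let $U$ be the $n\times l$ matrix with columns $u_{j}$ and set $X_{i}:=U^{T}\mathrm{Row}_{i}(B)^{T}\in\mathbb{R}^{l}$. Since the rows of $B$ are independent so are the $X_{i}$; moreover the event $\{\|Bu_{j}\|_{2}\le 2K\delta n\ \forall j\}$ is contained in $\{\sum_{i=1}^{n-k}\|X_{i}\|_{2}^{2}\le 4lK^{2}\delta^{2}n^{2}\}$. I would then apply Lemma 2.1 to the independent nonnegative variables $\|X_{i}\|_{2}^{2}$, taking as the single-index small-ball input Proposition 2.1 with $V=U^{T}$ and $m=l$; this tensorization yields a factor $\det(U^{T}U)^{-(n-k)/2}$, which by almost orthogonality (3) is at most $c^{-l(n-k)}\bigl(\prod_{j}d_{j}\bigr)^{-(n-k)}$.

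The critical input for Proposition 2.1 is the lower bound $RD_{L,\alpha}^{\xi_{i}}(U^{T})\ge\frac{1}{20\sqrt{l}}$. I would prove this by a dichotomy on any $\theta\in\mathbb{R}^{l}$ with $\|\theta\|_{2}\le\frac{1}{20\sqrt{l}}$. If $\|U\theta\|_{2}\le 8r\sqrt{n}$, the incompressibility property (4) and the built-in inequality $8r\le h(\tau^{2},\tau^{4}/2,K)$ from the definition of $r$ allow Lemma 2.5 to conclude that the RD-defining inequality fails. If $\|U\theta\|_{2}>8r\sqrt{n}$, property (6) of Lemma 3.4 gives $\mathrm{d}_{A}(U\theta,\mathbb{Z}^{n})^{2}>\rho^{2}n/4$; on the other hand almost orthogonality (3) and $\|u_{j}\|_{2}\le 4d_{j}\le 4R$ yield $\|U\theta\|_{2}\le\|U\|/(20\sqrt{l})\le R/5$, so the parameters $L=c\sqrt{k}$ and $R=\exp(\rho^{2}n/(4L^{2}))$ force $L^{2}\log_{+}(\alpha\|U\theta\|_{2}/L)\le\rho^{2}n/4$, and the RD-defining inequality again fails. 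Hence $\sqrt{l}/RD\le 20l\lesssim\sqrt{n}$.

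Finally, combining the size bound $|\mathcal{N}_{d}|\le (C\rho^{c}/(r\delta))^{ln}\bigl(\prod_{j}d_{j}/\sqrt{n}\bigr)^{n}n^{l}$ from Lemma 3.1 with the tensorized small-ball estimate, the factor $\bigl(\prod_{j}d_{j}\bigr)^{n}$ from the net cancels against the determinant contribution, leaving a residual $\bigl(\prod_{j}d_{j}\bigr)^{k}\le\exp(c_{1}l\rho^{2}n)$ controlled by $d_{j}\le R$ and the hypothesis $k\le\frac{\delta}{20}\sqrt{n}$. After this cancellation the surviving bound has the shape $\bigl[C(\tau,K)\rho^{c}/r\bigr]^{ln}\exp(c_{2}l\rho^{2}n)$ up to lower-order terms, which is $\le e^{-2ln}$ once $\rho\le\rho_{0}(\tau,K)$ is chosen small enough. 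The main obstacle is precisely this bookkeeping: tracking the $\prod_{j}d_{j}$ cancellation after tensorizing over $n-k$ rows, together with the two-regime RLCD dichotomy required for Proposition 2.1 to deliver a usable single-row small-ball estimate.
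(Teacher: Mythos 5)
Your proposal follows essentially the same route as the paper's proof: restrict to $\{\|B\|_{\mathrm{HS}}\le 2Kn\}$, use the random-rounding approximation of Lemma 3.4 to pass to a net point in $\mathcal{N}_d$, establish $RD_{L,\alpha}^{A}(U^{T})\ge\frac{1}{20\sqrt{l}}$ by the same two-regime dichotomy (Lemma 2.5 when $\|U\theta\|_{2}\le 8r\sqrt{n}$, property (6) plus the $R=\exp(\rho^{2}n/(4L^{2}))$ upper bound on $\|U\theta\|_{2}$ otherwise), then feed Proposition 2.1 into the tensorization Lemma 2.1 and union-bound against the size estimate of Lemma 3.1 so that the $\prod_{j}d_{j}$ factors cancel. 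The bookkeeping and parameter choices you describe (the residual $(\prod_j d_j)^k$ controlled by $d_j\le R$ and $k\le\frac{\delta}{20}\sqrt{n}$, and taking $\rho\le\rho_0(\tau,K)$ small) match the paper's computation of $P_d$.
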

\begin{proof}
Let $\tilde{\mathcal{N}}_{d} \subset \mathcal{N}_{d}$ be the set of $\left( u_{1},\dots,u_{l}\right) \in \mathcal{N}_{d}$ which satisfy (3)-(7) of Lemma \ref{lm3.5}. Let $U$ be the $n \times l$ matrix with columns 
$u_{1},\dots,u_{l}$.
Firstly, we show that
\begin{align}
\mathrm{RD}_{L,\alpha}^{A}\left( U^{T} \right) \ge \frac{1}{20\sqrt{l}}.
\end{align}
Take $\theta \in \mathbb{R}^{l}$ such that $\Vert \theta \Vert_{2} \le \frac{1}{20\sqrt{l}}$.
Assume that 
$$\Vert U\theta \Vert_{2} \le 8r\sqrt{n}\le h\left( \tau^{2},\tau^{4}/2,K \right)\sqrt{n},$$
applying Lemma \ref{Large RLCD} with $\alpha < \alpha_{0}(\tau^{2},\tau^{4}/2,K)$ yields
$$\mathrm{d}_{A}\left( U\theta,\mathbb{Z}^{n} \right) \ge L\sqrt{\log_{+}\frac{\alpha \Vert U\theta \Vert_{2}}{L}}.$$
Assume that $\Vert U\theta \Vert_{2} \le \sqrt{l} \max_{j \in [l]}{\Vert u_{j} \Vert_{2}} \le \sqrt{l}R$,
hence 
$$L\sqrt{\log_{+}{\frac{\alpha\Vert U\theta \Vert_{2}}{L}}} \le \frac{\rho}{2}\sqrt{n}.$$
By the (6) of Lemma \ref{lm3.5}, we get 
\begin{align}
\mathrm{d}_{A}\left( U\theta,\mathbb{Z}^{n} \right) > \frac{\rho}{2}\sqrt{n},
\end{align} 
whenever $\theta \in \mathbb{R}^{n}$ satisfies 
$$\Vert \theta \Vert_{2} \le \frac{1}{20\sqrt{l}} \quad and \quad \Vert U\theta \Vert_{2} \ge 8r\sqrt{n}.$$
Combing above cases, we show that any $\theta \in \mathbb{R}^{l}$ with $\Vert \theta \Vert_{2} \le 
\frac{1}{20\sqrt{l}}$ satisfies 
$$\mathrm{d}_{A}\left( U\theta,\mathbb{Z}^{n}\right) \ge L \sqrt{\log_{+}{\frac{\alpha\Vert U\theta \Vert_{2}}{L}}}.$$
Then $$\mathrm{RD}_{L,\alpha}^{A}\left( U^{T}\right) \ge \frac{1}{20\sqrt{l}}.$$ 
\par 
Using the (3) of Lemma \ref{lm3.5}, 

$$\mathrm{det}^{1/2}(U^{T}U) \ge 4^{-l}\prod_{j=1}^{l}{\Vert u_{j}\Vert_{2}} \ge 8^{-l}\prod_{j=1}^{l}{d_{j}}.$$ 
Then set $B_{i} = \mathrm{Row}_{i}(B)^{T}$, applying the Proposition \ref{pro2.6} for any $t \ge \delta \sqrt{n} \ge 20l \ge \frac{\sqrt{l}}{\mathrm{RD}_{L,\alpha}^{A}(U^{T})}$, We have  
\begin{align}
\textsf{P}\left( \Vert U^{T}B_{i} \Vert_{2} \le t\sqrt{l} \right)
\le \frac{\left( CL/\alpha \sqrt{l} \right)^{l}}{\mathrm{det}^{1/2}\left( U^{T}U \right)}\left( t+\frac{\sqrt{l}}{\mathrm{RD}_{L,\alpha}^{A}\left( U^{T} \right)} \right)^{l} \le \frac{C^{l}}{\prod_{j=1}^{l}{d_{j}}}t^{l}.\nonumber
\end{align}
Denote that 
$$Y_{i} = \frac{1}{l}\Vert U^{T}B_{i} \Vert_{2}^{2}, \quad M=\frac{C^{2}}{\left( \prod_{j=1}^{l}{d_{j}}\right)^{2/l}}.$$
Then we have 
\begin{align}
\textsf{P}\left( Y_{i} \le s \right) \le \left( Ms \right)^{l/2} \quad \text{for} \ s\ge s_{0}=\delta^{2} n.\nonumber
\end{align}
Applying Lemma \ref{lmn2.1} with $m = l/2$ and $t=4K^{2}s_{0}$, this yields 
\begin{equation}
\begin{aligned}
\textsf{P}\left( \Vert Bu_{j} \Vert_{2} \le 2K\delta n \ for \ all \ j\in [l] \right)
& \le \textsf{P}\left( \sum_{j=1}^{l}{\Vert Bu_{j} \Vert_{2}^{2}} \le 4K^{2}\delta^{2}ln^{2} \right) \\
& = \textsf{P}\left( \sum_{i=1}^{n-k}{Y_{i}} \le n\cdot 4K^{2}s_{0} \right)\\  
& \le \left( \tilde{C}\delta \right)^{l(n-k)} \left( \prod_{j=1}^{l}{\frac{\sqrt{n}}{d_{j}}} \right)^{n-k}.\nonumber
\end{aligned}
\end{equation}
Then 
\begin{equation}
\begin{aligned}
P_{d}
& = \textsf{P}\left( \exists (u_{1},\dots,u_{l})\in \tilde{\mathcal{N}}_{d} :\Vert Bu_{j} \Vert_{2} \le 2K\delta n,\ j\in [l] \right)\\
& \le \left| \tilde{\mathcal{N}}_{d} \right| \cdot \left( \tilde{C} \delta \right)^{l(n-k)} \cdot \left( \prod_{j=1}^{l}{\frac{\sqrt{n}}{d_{j}}} \right)^{n-k}\\
& \le n^{l} \cdot \left( \frac{C\rho^{c}}{r\delta} \right)^{ln}\left( \prod_{j=1}^{l}{\frac{d_{j}}{\sqrt{n}}} \right)^{n} \cdot \left( \tilde{C}\delta \right)^{l(n-k)} \cdot \left( \prod_{j=1}^{l}{\frac{\sqrt{n}}{d_{j}}} \right)^{n}\\
& \le \left( \frac{C_{1}\rho^{c}}{r} \right)^{ln} \cdot \delta^{-lk} \cdot \left( \prod_{j=1}^{l}{\frac{d_{j}}{\sqrt{n}}} \right)^{k}\\
& \le \left( \frac{C_{1}\rho^{c}}{r} \right)^{ln} \cdot \left( \frac{R}{\delta \sqrt{n}} \right)^{lk}\\
& < \left( \frac{\tilde{c}\rho^{c}}{r} \exp{(c\rho^{2})} \right)^{ln}\\
& < \exp{(-2ln)}.\nonumber
\end{aligned}
\end{equation}
where $\rho^{c} < a\rho$ for a sufficiently small constant $a > 0$.
\par 
We now applying Lemma \ref{lm3.4} yields 
\begin{equation}
\begin{aligned}
\textsf{P}\left( W_{d} \cap \ker{(B)} \ne \emptyset \right) 
& \le \textsf{P}\left( W_{d} \cap \ker{(B)} | \Vert B \Vert_{\mathrm{HS}} \le 2Kn \right)+\textsf{P}\left( \Vert B \Vert_{\mathrm{HS}} \ge 2Kn \right)\\
& \le P_{d} + \exp{(-cn^{2})}\\
& \le \exp{(-2ln)}.\nonumber
\end{aligned}
\end{equation}

\end{proof}
Sum up, we can prove Proposition \ref{Propo_1} by applying Lemma \ref{lm3.6} for every $d \in [r\sqrt{n},R]^{l}$.
\begin{proof}[\textsf{Proof of Proposition \ref{Propo_1}}]
Let $\mathcal{E}_{d}$ be the event that $W_{d} \cap \ker{(B)} \ne \emptyset$. Then $\mathcal{E}_{3.1} = \cup{\mathcal{E}_{d}}$.
\par Note that $d_{j}=2^{s_{j}}$ and $s_{j} \in \mathbb{N}\cap [r\sqrt{n},R]$, by estimating the cardinality of $d$, we have 
\begin{align}
\textsf{P}\left( \mathcal{E}_{d} \right) \le \left[\log{\left( \frac{R}{r\sqrt{n}} \right)}\right]^{l}\exp{(-2ln)} \le \exp{(-ln)}.\nonumber
\end{align}

\end{proof}
\section{The proof of Theorem \ref{Theo_main1}}
In this section, we will prove Theorem \ref{Theo_main1} using the inequality of Lemma \ref{lm2.9} and Proposition \ref{Propo_1}. This method relies on Lemma \ref{lm2.13}. Firstly, we need to show that if the RLCD of the subspace of $\ker{(B)}$ is large enough, we can get the large exponential bound.
\begin{mylem} \label{lm4.1}
Let $A$ be a $n \times n$ random matrix with independent entries that satisfy \eqref{Eq_condition}. For $k < c \sqrt{n}$, let $J \subset [n]$ with $|J| = n-k$ and define 
$\mathcal{E}_{J}^{k}$ as the event that exists a linear subspace $E \subset \left( \mathrm{span}\left( \mathrm{Col}_{i}(A),\ for \ all \ i \in J \right) \right)^{\perp}$ such that $\dim{E} \ge k/2$ and 
\begin{align}
\mathrm{RD}_{L,\alpha}^{A}\left( E \right) \ge \exp{(C\frac{n}{k})}.\nonumber
\end{align}
Then 
\begin{equation}
\begin{aligned}
\textsf{P}\left( \mathrm{Col}_{j}(A) \in \mathrm{span}\left( \mathrm{Col}_{i}(A),i\in J \right) ~\text{for}~\text{all}~ \ j \in [n] ~\text{and}~ \ \mathcal{E}_{J}^{k} \right) \le \exp{(-\tilde{c}_{\ref{lm4.1}}nk)},\nonumber
\end{aligned}
\end{equation}
where $\tilde{c}_{\ref{lm4.1}}$ is a constant.
\end{mylem}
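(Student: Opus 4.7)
The plan is to condition on the columns of $A$ indexed by $J$, measurably select the subspace $E$ guaranteed by $\mathcal{E}_J^k$, and then apply the small-ball estimate via RLCD (Corollary 2.1) to each of the $k$ remaining columns, which are independent of this conditioning. Independence allows us to take the $k$-th power of a single-column bound, and the lower bound $\exp(Cn/k)$ on the RLCD is tailored precisely so that, after multiplication, the resulting exponent is of order $-\tilde{c}nk$.

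First I would set $\mathcal{F} = \sigma(\mathrm{Col}_i(A) : i \in J)$, so that $\mathcal{E}_J^k \in \mathcal{F}$. On the event $\mathcal{E}_J^k$ I fix, via a Borel-measurable selector, a subspace $E = E(\mathcal{F})$ with $E \subset (\mathrm{span}\{\mathrm{Col}_i(A) : i \in J\})^{\perp}$, $\dim E = m := \lceil k/2 \rceil$, and $\mathrm{RD}_{L,\alpha}^{A}(E) \ge \exp(Cn/k)$. Because $E$ is orthogonal to $\mathrm{span}\{\mathrm{Col}_i(A) : i \in J\}$, the constraint $\mathrm{Col}_j(A) \in \mathrm{span}\{\mathrm{Col}_i(A) : i \in J\}$ forces $P_E \mathrm{Col}_j(A) = 0$. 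Writing $J^c = [n] \setminus J$, this yields the inclusion
\begin{align*}
\{\mathrm{Col}_j(A) \in \mathrm{span}\{\mathrm{Col}_i(A) : i \in J\}\ \forall\, j \in [n]\} \cap \mathcal{E}_J^k \ \subset\ \bigcap_{j \in J^c}\{P_E \mathrm{Col}_j(A) = 0\} \cap \mathcal{E}_J^k.
\end{align*}

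Next, conditional on $\mathcal{F}$, the columns $\{\mathrm{Col}_j(A)\}_{j \in J^c}$ are mutually independent and independent of $E$. Since $L = c\sqrt{k} \ge c\sqrt{m}$, Corollary 2.1 applies, and taking $t = 0$ gives, for each $j \in J^c$,
\begin{align*}
\textsf{P}\bigl(P_E \mathrm{Col}_j(A) = 0 \,\big|\, \mathcal{F}\bigr) \ \le\ \left(\frac{CL}{\alpha\, \mathrm{RD}_{L,\alpha}^{\mathrm{Col}_j(A)}(E)}\right)^{m} \ \le\ \left(\frac{C'\sqrt{k}}{\exp(Cn/k)}\right)^{m},
\end{align*}
where the last inequality uses $\mathrm{RD}_{L,\alpha}^{\mathrm{Col}_j(A)}(E) \ge \mathrm{RD}_{L,\alpha}^{A}(E)$ by the definition of the latter as a minimum over columns. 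Multiplying over the $k$ independent columns of $J^c$ and taking logarithms produces an exponent bounded by $mk\log(C'\sqrt{k}) - Cmn$.

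Plugging in $m \ge k/2$ together with the hypothesis $k \le c\sqrt{n}$, the error term satisfies $mk\log(C'\sqrt{k}) \le c_0\, n\log n$, while $Cmn \ge Cnk/2$ dominates it once the absolute constant $C$ is chosen sufficiently large relative to the other parameters. This yields $\textsf{P}(\,\cdot\,\mid\mathcal{F})\,\mathbf{1}_{\mathcal{E}_J^k} \le \exp(-\tilde{c}nk)$ for a suitable $\tilde{c} > 0$; integrating over $\mathcal{F}$ completes the proof. The main technical obstacle is the measurable selection of $E$ on the event $\mathcal{E}_J^k$, so that the conditioning argument is legitimate; once this is handled, the remainder is a direct application of Corollary 2.1, and the only nontrivial arithmetic step is verifying that the $C'\sqrt{k}$ loss from the small-ball estimate is absorbed by $\exp(Cn/k)$ in the regime $k \le c\sqrt{n}$, which is precisely why $C$ must be chosen large.
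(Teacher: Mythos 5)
Your overall strategy is the right one and, as far as one can tell from the paper (which only cites Rudelson's Lemma 6.1 without reproducing the argument), matches the intended proof: condition on the columns indexed by $J$; observe that $\mathcal{E}_J^k$ is $\mathcal{F}$-measurable because $RD^{A}_{L,\alpha}(E)$ is a deterministic functional of the subspace $E$ and the column distributions (the expectation in the definition is over the column, not conditional on it); select a subspace $E$ of dimension $m = \lceil k/2\rceil$ (shrinking $E$ only increases the RLCD, since the infimum is over a smaller set); note that the inclusion of $\mathrm{Col}_j(A)$ in $\mathrm{span}\{\mathrm{Col}_i(A):i\in J\}$ forces $P_E\mathrm{Col}_j(A)=0$; then apply Corollary 2.1 with $t=0$ to each of the $k$ independent remaining columns and multiply. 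The monotonicity $RD^{\mathrm{Col}_j(A)}_{L,\alpha}(E)\ge RD^{A}_{L,\alpha}(E)$ is also used correctly.

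The one genuine slip is in the closing arithmetic. You write that the error term satisfies $mk\log(C'\sqrt{k})\le c_0\,n\log n$ and that $Cmn\ge Cnk/2$ ``dominates it once $C$ is chosen sufficiently large.'' That comparison is false in the regime $k=O(\log n)$: $Cnk/2$ does not exceed $c_0 n\log n$ no matter how large the fixed constant $C$ is, and moreover $C$ is not a free parameter here (it is determined by $\rho$, $L=c\sqrt{k}$, etc., through $R=\exp(\rho^2 n/4L^2)$). The correct observation is the sharper bound $mk\log(C'\sqrt{k})\le k^2\log(C'\sqrt{k})$, together with $k\le c\sqrt{n}$, which gives $k\log(C'\sqrt{k})=O(\sqrt{n}\log n)\ll n$; hence $Cmn-mk\log(C'\sqrt{k})\ge (C/2)kn - o(1)\cdot kn\ge \tilde{c}kn$ for $n$ sufficiently large and $\tilde{c}<C/2$. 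With this fix the argument goes through; the rest of your reasoning is sound.
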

In fact, this lemma  can be easily obtained by Lemma 6.1 in Rudelson\cite{Rudelsonrank}.

\begin{proof}[\textsf{Proof of Theorem \ref{Theo_main1}}]

Assume that $\mathrm{rank}(A) \le n-k$. There exists $J \subset [n]$ with $|J|=n-k$ such that $\mathrm{Col}_{j}(A) \in \mathrm{span}\left( \mathrm{Col}_{i}(A),\ i \in J \right)$ for all $j \in [n]$.  
\\
Then 
\begin{equation}
\begin{aligned}
& \textsf{P}\left( \mathrm{rank}(A) \le n-k \right)\\
& = \textsf{P}\left( \exists J \subset [n],|J|=n-k:\mathrm{Col}_{j}(A) \in \mathrm{span}\left( \mathrm{Col}_{i}(A),i \in J  \right), \ for \ all \ j \in [n] \right)\\
& \le \sum_{J \subset [n],|J| = n-k}{\textsf{P}\left( \mathrm{Col}_{j}(A) \in \mathrm{span}\left( \mathrm{Col}_{i}(A),i \in J  \right), \ for \ all \ j \in [n]\setminus J \right)}\\
& \le \binom{n}{k} \sup_{J \subset [n],|J|=n-k}{\textsf{P}\left( \mathrm{Col}_{j}(A) \in \mathrm{span}\left( \mathrm{Col}_{i}(A),i \in J  \right), \ for \ all \ j \in [n]\setminus J \right)}.\nonumber
\end{aligned}
\end{equation}
Note that 
$$\binom{n}{k} \le \exp{\left( k\log{\left( \frac{en}{k} \right)}\right)} < \exp{(ckn)} ,$$
it shows that we only need to prove 
$$\textsf{P}\left( \mathrm{Col}_{j}(A) \in \mathrm{span}\left( \mathrm{Col}_{i}(A),i \in J  \right), \ for \ all \ j \in [n] \right) \le \exp{(-ckn)}.$$
Next we show that $\mathcal{E}_{J}^{k}$ occurs with at least $1-\exp{(-ckn)}$.
\par 
Let $B_{J}$ be an $(n-k) \times n$ random matrix with rows $\mathrm{Row}_{j}(B) = \mathrm{Col}_{n_{j}}(A)^{T}$, where $n_{j}$ is the $j$-th entries of $J$. And let $E_{0} = \ker{(B_{J})}$ and denote by $P_{E_{0}}$ the orthogonal projection onto $E_{0}$.
Let $\tau$ be chosen in Lemma \ref{lm2.9}, and denote $W_{0}=\mathrm{Comp}\left( \tau^{2},\tau^{4} \right)$.
\par 
Applying Lemma \ref{lm2.13} with $E_{0}$, $W_{0}$ and $l = k/4$ yields that at least one of the events described (1) and (2) of lemma occurs. Denote these events $\mathcal{E}_{\ref{lm2.13}}^{(1)}$ and $\mathcal{E}_{\ref{lm2.13}}^{(2)}$ respectively. Using Lemma \ref{lm2.9},
\begin{align}
\textsf{P}\left( \mathcal{E}_{\ref{lm2.13}}^{(1)} \right) \le \exp{(-\frac{k}{4}n)}.
\end{align}
Assume that $\mathcal{E}_{\ref{lm2.13}}^{(2)}$ occurs, there exists $F \subset E_{0}$ with $\dim{F}=\frac{3}{4}k $ such that $F \cap W_{0} = \emptyset$. Let $\rho$ be chosen in Proposition \ref{Propo_1}, and set
$$W_{1}=\left\{ v \in F : 2r\sqrt{n} \le \Vert v \Vert_{2} \le \exp{\left( \frac{\rho^{2}n}{4L^{2}} \right)} \ and \ \mathrm{d}_{A}\left( v,\mathbb{Z}^{n} \right) \le \rho \sqrt{n} \right\}.$$
Applying Lemma\ref{lm2.13}  with $F$, $W_{1}$ and $l$ yields that one of the (1) and (2) of Lemma \ref{lm2.13}. Denote these events by $\mathcal{V}_{\ref{lm2.13}}^{(1)}$ and $\mathcal{V}_{\ref{lm2.13}}^{(2)}$ respectively. Using Proposition \ref{Propo_1},
\begin{align}
\textsf{P}\left( \mathcal{V}_{\ref{lm2.13}}^{(1)} \right) \le \exp{(-\frac{k}{4}n)}.
\end{align}
Assume that $\mathcal{V}_{\ref{lm2.13}}^{(2)}$ occurs. It means that there exist subspace $\bar{F} \subset F$ with $\dim{\bar{F}} = k/2$ such that $\bar{F}\cap W_{1} = \emptyset$. We will show that if this event occurs,
$$\mathrm{RD}_{L,\alpha}^{A}\left( \bar{F} \right) \ge R:=\exp{\left( \frac{\rho^{2}n}{4L^{2}} \right)}.$$
Set $U: \mathbb{R}^{\frac{k}{2}} \to \mathbb{R}^{n}$ such that $U\mathbb{R}^{\frac{k}{2}} = \bar{F}$. Then $\mathrm{RD}_{L,\alpha}^{A}\left( U^{T} \right) = \mathrm{RD}_{L,\alpha}^{A}\left( \bar{F} \right)$. Let $ \theta \in \mathbb{R}^{k/2}$ be a vectors such that 
$$\mathrm{d}_{A}\left( U\theta,\mathbb{Z}^{n} \right) < L\sqrt{\log_{+}{\left( \frac{\alpha \Vert \theta \Vert_{2}}{L} \right)}}.$$
Since $U\mathbb{R}^{k/2}\cap S^{n-1} \subset \mathrm{Incomp}\left( \tau^{2},\tau^{4} \right)$. Applying Lemma \ref{lm2.4} with $\alpha < \alpha_{0}(\tau^{2},\tau^{4},p,K)$ yields,  if $\Vert \theta \Vert_{2} \le 2r \sqrt{n} \le h(\tau^{2},\tau^{4},p,K) \sqrt{n}$, 
$$\mathrm{d}_{A}\left( U\theta,\mathbb{Z}^{n} \right) \ge L\sqrt{\log_{+}{\left( \frac{\alpha\Vert \theta \Vert_{2}}{L} \right)}}.$$
Then $\Vert \theta \Vert_{2} \ge 2r\sqrt{n}$. Note that $U\theta \notin W_{1}$, we have $\Vert \theta \Vert_{2} \ge R$, now we show that $\mathcal{E}_{J}^{k}$ occurs with at least $1-\exp{(-ckn)}$.

Then 
\begin{equation}
\begin{aligned}
&\textsf{P}\left( \mathrm{Col}_{j}(A) \in \mathrm{span}\left( \mathrm{Col}_{i}(A),i \in J  \right), \ for \ all \ j \in [n] \right) \\
& \le 2 \exp{(-\frac{k}{4}n)} + \textsf{P}\left( \mathrm{Col}_{j}(A) \in \mathrm{span}\left( \mathrm{Col}_{i}(A),i \in J  \right), \ for \ all \ j \in [n]  \ and \ \mathcal{V}_{\ref{lm2.13}}^{(2)} \right)\\
& \le \exp{(-\tilde{c}kn)}.\nonumber
\end{aligned}
\end{equation}

\end{proof}
\section{The Proof of Theorem 1.2}
\begin{proof}[\textsf{Proof of Theorem \ref{Theo_main2}}]

Firstly, we prove that inequality is held when $n$ is sufficiently large.

By the Courant-Firsher-Weyl min-max principle: 
$$s_{n-k+1}\left( A \right) = \min_{\mathrm{dim}\left( H \right) = k}{\max_{x\in H \cap S^{n-1}}{\Vert Ax \Vert_{2}}}.$$

This means if $s_{n-k+1}\left( A \right) \le \frac{\varepsilon}{\sqrt{n}}$, then there exists $k$ orthogonal unit vectors $z_{1},\dots,z_{k}$ such that 
\begin{align}
    \Vert Az_{i} \Vert_{2} \le \frac{\varepsilon}{\sqrt{n}}, \quad 1\le i \le k.\nonumber
\end{align}
Let $Z^\top = \left( z_{1},\dots,z_{k} \right)$ be an $n \times k$ full-rank matrix. Applying Lemma \ref{lm2.13} yields that there exists $i_{1},\dots,i_{l} \in [n]$ such that     
\begin{equation}
    \begin{aligned}
        s_{l}\left( Z_{i_{1},\dots,i_{l}} \right)^{-1} 
        & \le C\min_{r \in \{ l+1,\dots,k \}}{\sqrt{\frac{rn}{\left( r-l \right)\sum_{i=r}^{k}{s_{i}(Z)^{2}}}}}\\
        & \le C\min_{r\in\{ l+1,\dots,k \}}{\sqrt{\frac{rn}{(r-l)(k-r+1)}}}\\
        & \le C_{1}\sqrt{\frac{kn}{(k-l)^{2}}}.\nonumber
    \end{aligned}
\end{equation}
Set $F = (Z_{i_{1},\dots,i_{l}})^\top$, $\tilde{F}=(Z_{i_{l+1},\dots,i_{n}})^\top$, and $A_{i} = \mathrm{Col}_{i}\left(  A\right)$. Define the matrix
$A_{t} :=(A_{i_{1}},\dots, A_{i_{l}})$, and $A_{s} = (A_{i_{l+1}},\dots,A_{i_{n}})$.
\\ 
Then 
$$M := AZ^\top = (A_{i_{1}},\dots, A_{i_{l}})F + (A_{i_{l+1}},\dots,A_{i_{n}})\tilde{F} = A_{t}F+A_{s}\tilde{F} .$$
Let $F_{1} = F^\top(FF^\top)^{-1}$ be the right inverse of $A$. Thus, we have 
$$MF_{1} := (A_{i_{1}},\dots,A_{i_{l}}) + (A_{i_{l+1}},\dots,A_{i_{n}})\tilde{F}F_{1}=A_{1} +A_{2}\tilde{F}F_{1}$$
Let $H$ be the linear space spanned by $A_{i_{l+1}},\dots,A_{i_{n}}$. Let $P$ be the orthogonal projection in $\mathbb{R}^{n}$ onto $H^\top=\ker{\left( A_{2}^\top \right)}$. Then
$$PMF_{1} = PA_{1},$$
which yields that 
\begin{align}
    \mathrm{dist}^{2}\left( A_{i_{1}},H \right) + \cdots + \mathrm{dist}^{2}\left( A_{i_{l}},H \right) = \Vert PMF_{1} \Vert_{\mathrm{HS}}^{2} \le \Vert MF_{1} \Vert_{\mathrm{HS}}^{2} \le \Vert F_{1} \Vert_{2}^{2} \Vert M \Vert_{\mathrm{HS}}^{2}.\nonumber
\end{align}
On the one hand, for any $x \in \mathbb{R}^{n}$
$$1 = \Vert FF_{1} \Vert \ge \frac{\Vert FF_{1}x \Vert_{2}}{\Vert x\Vert_{2}} \ge s_{l}\left( F \right) \frac{\Vert F_{1}x \Vert_{2}}{\Vert x\Vert_{2}}. $$
It imply that
\begin{align}
    s_{1}(F_{1}) \le s_{l}(F)^{-1} \le C_{1}\sqrt{\frac{kn}{(k-l)^{2}}}.
\end{align}
On the other hand  
\begin{align}
    \Vert M \Vert_{\mathrm{HS}}^{2} \le \sum_{i=1}^{k}{\Vert \mathrm{Col}_{i}\left( M \right) \Vert_{2}^{2}} = \sum_{i=1}^{k}{\Vert Az_{i} \Vert_{2}^{2}} \le \frac{k\varepsilon^{2}}{\sqrt{n}}.
\end{align}
Applying (5.1) and (5.2) 
\begin{align}
    \Vert MF_{1} \Vert_{\mathrm{HS}} \le \Vert F_{1} \Vert_{2} \Vert M \Vert_{\mathrm{HS}} \le C_{1}\sqrt{\frac{kn}{(k-l)^{2}}} \sqrt{\frac{k\varepsilon^{2}}{n}} \le C_{1}\frac{k\varepsilon}{k-l}. \nonumber
\end{align}
Furthermore, there exists $i_{1},\cdots,i_{l} \in [n]$ satisfying that:
\begin{align}
     \mathrm{dist}^{2}\left( A_{i_{1}},H \right) + \cdots + \mathrm{dist}^{2}\left( A_{i_{l}},H \right) \le \left(  \frac{C_{1}k\varepsilon}{k-l}\right)^{2}.
\end{align}
Now, we have 

 \begin{align}
    \textsf{P}\left( s_{n-k+1}\left( A \right) \le \frac{\varepsilon}{\sqrt{n}} \right)
    & \le \textsf{P}\left( \exists \{ i_{1}, \cdots ,i_{l} \} \subset [n] \  ; \sum_{m=1}^{l}{\Vert P_{H^{\perp}}A_{i_{m}} \Vert_{2}^{2} }  \le \left(  \frac{C_{1}k\varepsilon}{k-l}\right)^{2} \right)\\
    & \le \sum_{i_{1},\dots,i_{l} \in [n]}{\textsf{P}\left( \sum_{m=1}^{l}{\Vert P_{H^{\perp}}A_{i_{m}} \Vert_{2}^{2} } \le \left( \frac{C_{1}k\varepsilon}{k-l} \right)^{2} \right)}.\nonumber
\end{align}

Similar to the proof of Theorem \ref{Theo_main1},  
\begin{align}
\textsf{P}\left( \mathcal{H}:= \exists E \subset H^{\perp}, \mathrm{dim}\left( E \right) = \frac{l}{2} , \mathrm{RD}_{\tilde{L},\alpha}^{A}\left( E \right) \ge R := \exp{\left( \frac{c\rho^{2}n}{l} \right)} \right) \ge 1- 2e^{-cln},\nonumber
\end{align}
where $\tilde{L} = c\sqrt{l}$.
\par 
Then 
\begin{equation}
    \begin{aligned}
        & \textsf{P}\left( \sum_{m=1}^{l}{\Vert P_{H^{\perp}}A_{i_{m}} \Vert_{2}^{2}} \le \left( \frac{C_{1}k\varepsilon}{k-l} \right)^{2} \right) \\
        & \le \textsf{P}\left( \sum_{m=1}^{l}{\Vert P_{H^{\perp}}A_{i_{m}} \Vert_{2}^{2}} \le \left( \frac{C_{1}k\varepsilon}{k-l} \right)^{2} , \mathcal{H} \right) + 2e^{-cln}\\
        & \le \textsf{P}\left( \sum_{m=1}^{l}{\Vert P_{E}A_{i_{m}} \Vert_{2}^{2}} \le \left( \frac{C_{1}k\varepsilon}{k-l} \right)^{2},\mathcal{H} \right) + 2e^{-cln}.\nonumber
    \end{aligned}
\end{equation}
Corollary \ref{col2.8} yields that 
$$\textsf{P}\left( \Vert P_{E}A_{i_{m}} \Vert_{2} \le t \sqrt{l/2} , \mathcal{H}\right) \le (ct)^{l/2} + e^{-cn}.$$
Applying tensorization similar to Lemma \ref{lmn2.1} yields that 
\begin{align}
    \textsf{P}\left( \sum_{m=1}^{l}{\Vert P_{E}A_{i_{m}} \Vert_{2}^{2}} \le l^{2}t^{2},\mathcal{H} \right) \le \left( Ct \right)^{l^{2}/2} + e^{-cln}.\nonumber
\end{align}
It implies that 
\begin{align}
    \textsf{P}\left( \sum_{m=1}^{l}{\Vert P_{E}A_{i_{m}} \Vert_{2}^{2}} \le \left( \frac{Ck\varepsilon}{k-l} \right)^{2},\mathcal{H} \right) \le \left( \frac{Ck\varepsilon}{l(k-l)} \right)^{l^{2}/2} + e^{-cln}.\nonumber 
\end{align}
Furthermore, we have 
\begin{equation}
    \begin{aligned}
        \textsf{P}\left( s_{n-k+1}\left( A \right) \le \frac{\varepsilon}{\sqrt{n}}  \right)
        & \le \binom{n}{l}\left(  \left( \frac{Ck\varepsilon}{l(k-l)} \right)^{l^{2}/2} + \exp{(-cln)} \right)\\
        & \le n^{l} \left( \left( \frac{Ck\varepsilon}{l(k-l)} \right)^{l^{2}/2} + \exp{(-cln)}\right).\nonumber 
    \end{aligned}
\end{equation}
Let $l=\sqrt{2\gamma}k$, $\gamma \in (0,1/2)$. Then 
\begin{align}
    \textsf{P}\left( s_{n-k+1}\left( A \right) \le \frac{\varepsilon}{\sqrt{n}} \right) \le \left( \frac{C\varepsilon}{k} \right)^{\gamma k^{2}} + \exp{(-ckn)}.\nonumber
\end{align} 
Then Theorem \ref{Theo_main2} valid for any $n > n_{0}$. Note that 
\begin{align}
    \lim_{\varepsilon \to 0}{\textsf{P}\left( s_{n-k+1}\left( A \right) \le \frac{\varepsilon}{\sqrt{n}} \right)} = \textsf{P}\left( s_{n-k+1}\left( A \right) = 0 \right) \le e^{-ckn}.\nonumber
\end{align}
where the last inequality is due to Theorem \ref{Theo_main1}. Hence, for any fixed n, there exists an $\varepsilon_{0}\left( n,n_{0} \right)$ such that for any $\varepsilon < \varepsilon_{0}\left( n,n_{0} \right)$ 
\begin{align}
    \textsf{P}\left( s_{n-k+1}\left( A \right) \le \frac{\varepsilon}{\sqrt{n}} \right)-\textsf{P}\left( s_{n-k+1}\left( A \right) = 0 \right) \le e^{-cn_{0}^{2}}.
\end{align}
Set $\varepsilon_{0} = \min{\left\{ \varepsilon(1,n_{0}),\dots,\varepsilon(n_{0}-1,n_{0}) \right\}}$. Applying (5.4) yield that for any $n < n_{0}$ and $\varepsilon < \varepsilon_{0}$ 
$$\textsf{P}\left( s_{n-k+1}(A) \le \frac{\varepsilon}{\sqrt{n}} \right) \le 2 e^{-ckn}.$$
And for any $n < n_{0}$, $\varepsilon > \varepsilon_{0}$, we have 
$$\textsf{P}\left( s_{n-k+1}(A) \le \frac{\varepsilon}{\sqrt{n}} \right) \le 1 \le \left( \frac{C\varepsilon}{k} \right)^{\gamma k^{2}}.$$
where $C= c\sqrt{n_{0}}/\varepsilon_{0}$. Hence Theorem \ref{Theo_main2} valid for any $n$ and $\varepsilon \ge 0$.
\end{proof}



\textbf{Acknowledgment:} Song was partly supported by  the Youth Student Fundamental Research Funds of Shandong University   (No. SDU-QM-B202407). Wang was partly supported by Shandong Provincial Natural Science Foundation (No. ZR2024MA082) and the National Natural Science Foundation of China (No. 12071257).


\end{document}